\newtheorem{theorem}{Theorem}[section]
\newtheorem{lemma}[theorem]{Lemma}
\newtheorem{corollary}[theorem]{Corollary}
\newtheorem{proposition}[theorem]{Proposition}
\newtheorem{example}[theorem]{Example}
\theoremstyle {definition}
\newtheorem{remark}[theorem]{Remark}
\DeclareMathOperator{\area}{area} 
\DeclareMathOperator{\vol}{vol}
\newcommand{\R}{\mathbb{R}}
\DeclareMathOperator{\Rm}{Rm}
\DeclareMathOperator{\supp}{supp}
\DeclareMathOperator{\proj}{proj}
\DeclareMathOperator{\tr}{tr}
\newcommand{\cH}{\mathcal H}
\title [Isoperimetric structure] {Isoperimetry, scalar curvature, and mass in asymptotically flat Riemannian $3$-manifolds} 
\author{Otis Chodosh}
\address{Department of Pure Mathematics and Mathematical Statistics, University of Cambridge, Wilberforce Road, Cambridge CB3 0WB, United Kingdom}
\email{oc249@cam.ac.uk}
\author{Michael Eichmair}
\address{Faculty of Mathematics, University of Vienna, Oskar-Morgenstern-Platz 1, 1090 Vienna, Austria}
\email {michael.eichmair@univie.ac.at}
\author{Yuguang Shi}
\address{Key Laboratory of Pure and Applied mathematics, School of  Mathematical Sciences, Peking University, Beijing, 100871, P. R. China}
\email{ygshi@math.pku.edu.cn}
\author{Haobin Yu}
\address{School of
 Mathematical Sciences, Peking University, Beijing, 100871, P. R. China} 
\email{robinyu@math.pku.edu.cn}
\begin{document}

\begin{abstract} 

Let $(M, g)$ be an asymptotically flat Riemannian  $3$-manifold with non-negative scalar curvature and positive mass. We show that each leaf of the canonical foliation through stable constant mean curvature surfaces of the end of $(M, g)$ is uniquely isoperimetric for the volume it encloses. 
\end{abstract}

\maketitle

\date{}


\section {Introduction}

A complete Riemannian $3$-manifold $(M, g)$ is said to be \emph{asymptotically flat} if there is a compact subset $K \subset M$ and a diffeomorphism 
\begin{align} \label{eqn:chartatinfinity}
M \setminus K \cong \{ x \in \R^3 : |x| > 1/2\}
\end{align}
with 
\begin{align} \label{eqn:AF}
g_{ij} = \delta_{ij} + \sigma_{ij} \qquad \text{ where } \qquad |x|^{|\alpha|}|(\partial^{\alpha} \sigma_{ij}) (x)| = O(|x|^{-\tau}) \quad \text{ as } \quad |x| \to \infty
\end{align}
for some $\tau > 1/2$ and all multi-indices  $\alpha$ with $|\alpha| =0,1, 2,3$. 
We also require that the scalar curvature of $(M, g)$ is integrable. Moreover, the boundary of $M$, if non-empty, is minimal, and there are no closed minimal surfaces in the interior of $M$. Given $\rho > 1$, we use $S_\rho$ to denote the surface in $M$ that corresponds to the centered coordinate sphere $\{x \in \R^3 : |x| = \rho\}$ in the \emph{chart at infinity} \eqref{eqn:chartatinfinity}. We let $B_\rho$ denote the bounded open region in $M$ that is enclosed by $S_\rho$.  

The ADM-mass (after R. Arnowitt, S. Deser, and C. W. Misner \cite{ADM:1961}) of such an asymptotically flat manifold $(M, g)$ is given by 
\[
m_{ADM} =  \lim_{\rho \to \infty } \frac{1}{16 \pi \rho} \int_{\{|x| = \rho\}} \sum_{i, j = 1}^3 \left( \partial_i g_{ij} - \partial_j g_{ii} \right) x^j.
\]
It is independent of the particular choice of chart at infinity \eqref{eqn:chartatinfinity} by work of R. Bartnik \cite{Bartnik:1986}. The fundamental \emph{positive mass theorem}, proven first by R. Schoen and S.-T. Yau \cite{PMT1} using minimal surface techniques and then by E. Witten \cite{Witten:1981} using spinors, asserts that for $(M,g)$ asymptotically flat with non-negative scalar curvature, $m_{ADM} \geq 0$ with equality only when $(M, g)$ is flat Euclidean space. 

Let $V > 0$. Consider
\begin{align} \label{eqn:RV}
\mathcal{R}_V = \{ \Omega : \Omega \subset M \text{ is a compact region with } \partial M \subset \partial \Omega \text{ and} \vol (\Omega) = V\}
\end{align}
and let
\begin{align} \label{eqn:isoperimetricprofile}
A(V) = - \area (\partial M) + \inf \{ \area (\partial \Omega): \Omega \in \mathcal{R}_V\}.
\end{align}
When the scalar curvature of $(M, g)$ is non-negative, then a result of Y. Shi \cite{Shi:2016} combined with an observation in Appendix K of \cite{mineffectivePMT} shows that there is a region $\Omega_V \in \mathcal {R}_V$ that achieves the infimum in \eqref{eqn:isoperimetricprofile}. The proof that such \emph{isoperimetric regions} exist in $(M, g)$ is indirect and offers no real clue as to the position of these regions. The main result of this paper is to show that if $(M, g)$ is not Euclidean space and provided that the volume $V > 0$ is sufficiently large, then $\Omega_V$ is bounded by the horizon $\partial M$ and a stable constant mean curvature surface that belongs to the \emph{canonical foliation} of the end of $M$. In particular, the solution of the isoperimetric problem in $(M, g)$ for large volumes is \emph{unique}.

\begin {theorem} \label{thm:main}
Let $(M, g)$ be a complete Riemannian $3$-manifold that is asymptotically flat at rate $\tau > 1/2$ and which has non-negative scalar curvature and positive mass. There is $V_0 > 0$ with the following property. Let $V \geq V_0$. There is a unique region $\Omega_V \in \mathcal{R}_V$ such that 
\[
\area (\partial \Omega_V) \leq \area (\partial \Omega)
\]
for all $\Omega \in \mathcal{R}_V$. The boundary of $\Omega_V$ consists of $\partial M$ and a leaf of the canonical foliation of the end of $M$.
\end {theorem}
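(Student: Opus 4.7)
The plan is to combine existence of isoperimetric regions (via \cite{Shi:2016} together with Appendix K of \cite{mineffectivePMT}) with a rigidity result identifying such minimizers with leaves of the canonical foliation. Throughout, fix $V$ large, let $\Omega_V \in \mathcal{R}_V$ be a minimizer, and write $\Sigma_V := \partial \Omega_V \setminus \partial M$.

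First, I would extract basic structural properties of $\Sigma_V$ as $V \to \infty$. Standard regularity for volume-constrained minimizers implies that $\Sigma_V$ is a smooth, closed, embedded, stable constant mean curvature surface; comparing its area with that of large coordinate spheres enclosing the same volume forces the mean curvature to tend to zero and $\Sigma_V$ to exit every compact subset of $M$. For $V \ge V_0$ sufficiently large, I would further establish that $\Sigma_V$ is connected (via an Euclidean-isoperimetric argument on components lying far out in the chart at infinity, together with the observation that only one component of $\partial \Omega_V$ can enclose the horizon) and that $\Sigma_V$ is a topological sphere (by combining the stability inequality with the Gauss equation and non-negativity of scalar curvature to rule out positive-genus components).

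The heart of the proof, and the main obstacle, is to show that $\Sigma_V$ falls in the \emph{effective regime} in which the canonical foliation is the unique family of large stable CMC spheres. Concretely, one needs scale-invariant roundness and centering estimates for $\Sigma_V$, and here the positive mass hypothesis is essential: integrated Hawking-mass monotonicity combined with the stability inequality forces a stable CMC sphere of small mean curvature in an asymptotically flat manifold with positive mass to be nearly round and nearly centered at the center of mass, ruling out a family of $\Sigma_V$ drifting off to infinity or collapsing onto the horizon. I would invoke the classification developed in \cite{mineffectivePMT} (extending Huisken--Yau and Metzger) to identify $\Sigma_V$ with some leaf $\Sigma$ of the canonical foliation.

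Uniqueness of $\Omega_V$ then follows because the enclosed volume is a strictly increasing smooth function along the canonical foliation and is surjective onto $[V_0, \infty)$ for $V_0$ sufficiently large, so $\Sigma$ is pinned down by $V$. Conversely, running the same argument for each canonical leaf shows that the region bounded by $\partial M$ and that leaf achieves the infimum in \eqref{eqn:isoperimetricprofile}, completing the proof.
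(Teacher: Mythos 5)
Your skeleton (existence of minimizers, identification of the boundary with a canonical leaf, uniqueness via monotonicity of enclosed volume along the foliation) matches the paper's, but the proposal has a genuine gap at precisely the step you call the heart of the proof. You assert that ``integrated Hawking-mass monotonicity combined with the stability inequality forces a stable CMC sphere of small mean curvature \ldots to be nearly round and nearly centered,'' ruling out a family of $\Sigma_V$ drifting to infinity. The roundness part is fine: Christodoulou--Yau \eqref{CYspheres} together with Huisken--Ilmanen \eqref{eqn:monotonicity} gives $r\int_\Sigma|\mathring h|^2\,d\mu\leq 48\pi\, m_{ADM}$ and $|H-2/r|=O(r^{-2})$. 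But neither estimate gives \emph{centering}: both are consistent with the rescaled regions converging to $B_1(\xi)$ for any $\xi\neq 0$, and the Carlotto--Schoen example shows that far-outlying stable CMC spheres genuinely occur in this class of metrics. Excluding the outlying alternative is the main content of the paper, and it requires a quantitative \emph{lower} bound on the isoperimetric deficit of isoperimetric regions (namely that it converges to $m_{ADM}>0$, Theorem \ref{prop:huisk-iso-mass}) played against an \emph{upper} bound of $o(1)$ for outlying regions. The latter is obtained either by the delicate comparison of $\Sigma$ with an off-center coordinate sphere $S_r(a)$ via Simons' identity, Moser iteration, and the integration by parts of Section \ref{sec:outlyingspheres} (under stronger decay), or, in the stated generality $\tau>1/2$, by running the level set flow of $\Omega_V$, showing the rescaled flow is the shrinking sphere $S_{\sqrt{1-4t}}(\xi)$, estimating the Hawking mass of the shell by $o(1)$, and applying the Jauregui--Lee monotonicity of Huisken's isoperimetric deficit in two stages. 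None of this is subsumed by the stability inequality plus Hawking-mass monotonicity, so as written the centering step does not go through.

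A second, smaller gap: you claim positive genus is excluded by ``the stability inequality with the Gauss equation and non-negativity of scalar curvature.'' The Ros refinement \eqref{CYtori} of Christodoulou--Yau only gives $H^2\area(\Sigma)\leq 64\pi/3$ for stable CMC surfaces of arbitrary genus, which does not rule out tori. The paper needs a separate argument (Sections \ref{sec:mHtori}--\ref{sec:mainproofapproximate}): for \emph{outlying} positive-genus surfaces the Euclidean Willmore bound $\int \overline H^2\,d\overline\mu\geq 16\pi+\delta$ forces $H^2\area(\Sigma)\geq 16\pi$, hence non-positive Hawking mass, and combining this with $A'(V)=H$ and the absolute continuity of the profile contradicts $m_{ADM}>0$ via Theorem \ref{prop:huisk-iso-mass}. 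You would need to supply an argument of this kind (or the mean curvature flow route, which sidesteps the topology question) before the reduction to the sphere case is justified.
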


Theorem \ref{thm:main} shows that non-negative scalar curvature, the large scale isoperimetric structure, and --- in view of the results in Appendix \ref{sec:sharpisoperimetric} --- the positive mass theorem are fundamentally related. 

We recall here that  the isoperimetric deficit of a small geodesic ball reflects the sign of the scalar curvature at the center of the ball: less area is needed to enclose a given small amount of volume if the scalar curvature is larger; see \cite {Nardulli:2009} and the references therein. 

The uniqueness of solutions to the isoperimetric problem for large volumes in Theorem \ref{thm:main} is in strong contrast to the non-uniqueness of large stable constant mean curvature surfaces in asymptotically flat manifolds exhibited by the following example constructed by A. Carlotto and R. Schoen in \cite{Carlotto-Schoen}. 

\begin {example} [\cite{Carlotto-Schoen}] There is an asymptotically flat Riemannian metric $g = g_{ij} \, dx^i \otimes dx^j$ on $\R^3$ with non-negative scalar curvature and positive mass and such that $g_{ij} = \delta_{ij}$ on $\R^2 \times (0, \infty)$.
\end {example} 
We emphasize that the examples constructed in \cite{Carlotto-Schoen} are asymptotially flat of rate $\tau < 1$. 

The special case of Theorem \ref{thm:main} where $(M, g)$ is \emph{also} $C^0$-asymptotic to Schwarzschild with positive mass, i.e. where in addition
\begin{align} \label{eqn:asymptoticSchwarzschild}
g_{ij} = \left( 1 + \frac{m}{2 |x|}\right)^4 \delta_{ij} + O (|x|^{-1-\alpha}) \qquad \text{ as } \qquad |x| \to \infty
\end{align}
in the chart \eqref{eqn:chartatinfinity} for some $m > 0$ and  $\alpha > 0$ was proven by M. Eichmair and J. Metzger in \cite{isostructure} using a completely different technique than the approach developed here, building on an ingenious idea of H. Bray \cite{Bray:1997}.\footnote{The expansion \eqref{eqn:asymptoticSchwarzschild} is required to hold up to and including second derivatives in \cite{isostructure}. Owing to the work on the canonical foliaton by C. Nerz \cite{Nerz:2014}, this requirement can be weakened as stated above.} They have extended this result to higher-dimensional asymptotically flat Riemannian manifolds in \cite{hdiso}. These results in \cite{Bray:1997, isostructure, hdiso} make no assumption on the scalar curvature. 

The analogous question is largely open in the asymptotically hyperbolic setting. O. Chodosh \cite{Chodosh:large-iso} has shown that large isoperimetric surfaces are centered coordinate spheres in the special case where the metric is exactly isometric to Schwarzschild-anti-de Sitter outside of a compact set; cf. \cite{Bray:1997}. We also mention the work of J. Corvino, A. Gerek, M. Greenberg and B. Krummel \cite{CorvinoGerekGreenbergKrummel} on exact Schwarzschild-anti-de Sitter preceding \cite{Chodosh:large-iso}, and the subsequent work of D. Ji, Y. Shi, and B. Zhu \cite{Ji-Shi-Zhu:2015} in this direction.  

In the proof of Theorem \ref{thm:main} we use results on the canonical foliation through stable constant mean curvature surfaces of the end of asymptotically flat manifolds with positive mass. We summarize from the literature what is needed here in Appendix \ref{sec:canonicalfoliation}, deferring the reader to \cite{Huang:2011, Nerz:2014, Ma:2016} for the strongest available results. We mention that this rich field departs from the celebrated results of G. Huisken and S.-T. Yau \cite{Huisken-Yau:1996} and J. Qing and G. Tian \cite{Qing-Tian:2007} for data with Schwarzschild asymptotics \eqref{eqn:asymptoticSchwarzschild}. The uniqueness question for large stable constant mean curvature surfaces in asymptotically flat $3$-manifolds with non-negative scalar curvature that intersect the center of $(M, g)$ has been developed in \cite{stablePMT, Carlotto:2014, mineffectivePMT}. The optimal result one can expect here has been given by A. Carlotto, O. Chodosh, and M. Eichmair in \cite{mineffectivePMT}:

\begin{theorem} [\cite{mineffectivePMT}] \label{thm:mineffectivePMT} 
Let $(M, g)$ be a complete Riemannian $3$-manifold that is asymptotically flat with non-negative scalar curvature. Assume that $(M, g)$ contains no properly embedded totally geodesic flat planes along which the ambient scalar curvature vanishes. Let $C \subset M$ be compact. There is $\alpha = \alpha(C) > 0$ so that every connected closed stable constant mean curvature surface $\Sigma \subset M$ with $\area (\Sigma) \geq \alpha$  is disjoint from $C$.
\end{theorem}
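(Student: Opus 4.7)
The approach is proof by contradiction. Suppose there exist a compact set $C \subset M$ and a sequence of connected closed stable constant mean curvature surfaces $\Sigma_i \subset M$ with $\area(\Sigma_i) \to \infty$ and $\Sigma_i \cap C \neq \emptyset$. The plan is to extract from $\{\Sigma_i\}$ a limit lamination $\mathcal L$ consisting of complete properly embedded two-sided stable \emph{minimal} surfaces meeting $C$, and then to apply rigidity in order to produce a forbidden totally geodesic flat plane along which the ambient scalar curvature vanishes.

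The first step is to pass to a limit. Stability of the $\Sigma_i$, combined with the bounded geometry of $(M,g)$, yields uniform local curvature estimates via a Schoen--Simon type argument, adapted to the stable CMC setting in the spirit of Rosenberg--Souam--Toubiana. The monotonicity formula applied with the (a~priori bounded) mean curvatures $H_i$ then supplies local area bounds on compact subsets of $M$. After passing to a subsequence, one obtains smooth convergence on compact sets (allowing finite multiplicity) to a lamination $\mathcal L$ in $M$, whose leaves are stable CMC surfaces with a common limiting mean curvature $H_\infty$, and some leaf of which passes through $C$.

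The next step is to show $H_\infty = 0$. Here I would exploit the foliation of the asymptotic end of $(M,g)$ by the coordinate spheres $S_\rho$: by barrier and comparison arguments, any closed CMC surface in $(M,g)$ with mean curvature bounded below by a positive constant must remain in a bounded portion of $M$ whose area is controlled from above in terms of $|H|$. Combined with $\area(\Sigma_i) \to \infty$ and $\Sigma_i \cap C \neq \emptyset$, this forces $H_i \to 0$, and consequently every leaf of $\mathcal L$ is a complete properly embedded two-sided stable minimal surface in $(M,g)$.

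Finally I would invoke rigidity of complete two-sided stable minimal surfaces in $3$-manifolds of non-negative scalar curvature. The standing hypothesis that $(M,g)$ has no closed minimal surfaces in its interior rules out the compact case. For a non-compact leaf $\Sigma_\infty \subset \mathcal L$, one inserts exhausting cutoff functions into the stability inequality and combines with the Gauss equation and Gauss--Bonnet, in the style of Fischer--Colbrie--Schoen and Schoen--Yau, to conclude that $\Sigma_\infty$ is totally geodesic, intrinsically flat, and satisfies $R_M \equiv 0$ along $\Sigma_\infty$. Since some leaf meets $C$, this produces a properly embedded totally geodesic flat plane in $(M,g)$ along which the ambient scalar curvature vanishes, contradicting the hypothesis. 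In my view, the subtlest part is ruling out a nonzero limiting mean curvature under the weak assumption of asymptotic flatness alone; the passage to a limit and the concluding rigidity argument are comparatively standard.
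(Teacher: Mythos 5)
First, note that the paper does not prove this statement: it is quoted from \cite{mineffectivePMT} (it is Corollary 1.13 there, resting on the main rigidity theorems of that paper), so the comparison below is with the argument in that reference. Your overall architecture --- contradiction, curvature estimates and a limit lamination, forcing the limiting mean curvature to vanish, and then rigidity of complete stable minimal leaves --- is indeed the architecture of the actual proof. But two of your steps have genuine gaps.

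The step you flag as subtlest, showing $H_i \to 0$, is handled incorrectly. The barrier argument does show that a closed CMC surface with $|H|\geq c>0$ is confined to a fixed coordinate ball (maximum principle at the outermost coordinate sphere, whose mean curvature is $\approx 2/\rho$), but your further claim that confinement to a bounded region controls $\area(\Sigma)$ \emph{from above} in terms of $|H|$ is unjustified: the Minkowski-type identity that gives this in $\mathbb{R}^3$ has no analogue in a general compact Riemannian region, and without it $\area(\Sigma_i)\to\infty$ does not force $H_i\to 0$. The correct (and much shorter) route is the Christodoulou--Yau/Ros stability estimate \eqref{CYtori} of Lemma \ref{lem:CY}: with $R\geq 0$ it gives $H_i^2\area(\Sigma_i)\leq 64\pi/3$ directly, hence $H_i\to 0$, and it simultaneously supplies the uniform total curvature bound $\int_{\Sigma_i}|\mathring h_i|^2\leq 32\pi$ that the compactness argument needs. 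The second gap is in the rigidity step: for a complete non-compact two-sided stable minimal leaf, Fischer-Colbrie--Schoen only yields the full conclusion (totally geodesic, intrinsically flat, $R\equiv 0$ along the leaf) when the leaf is conformally a cylinder or is known to be parabolic/of quadratic area growth; in the conformally planar case the logarithmic cutoff trick alone does not close the argument. Establishing that the leaves are \emph{properly} embedded and then using the asymptotic flatness to control their ends (asymptotically planar structure) is precisely the hard content of \cite{mineffectivePMT} and of the resolution of Schoen's conjecture discussed at the start of Section \ref{sec:divergentisos}; it cannot be dismissed as standard rigidity in the style of Fischer-Colbrie--Schoen and Schoen--Yau.
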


We also mention in this context the delicate relationship between far outlying stable constant mean curvature spheres and the role of scalar curvature discovered in the work of S. Brendle and M. Eichmair \cite{offcenter}. The assumption on the non-existence of certain totally geodesic planes in Theorem \ref{thm:mineffectivePMT} is satisfied when $(M, g)$ is asymptotic to Schwarzschild with mass $m > 0$ to second order; cf. the work of A. Carlotto \cite{Carlotto:2014} and Corollary 1.11 in \cite{mineffectivePMT}. We refer the reader to the introduction of \cite{mineffectivePMT} for a recent survey of the literature on the stability based theory. 

We now describe the proof of Theorem \ref{thm:main}. We first recall that the Hawking mass of stable constant mean curvature spheres in (maximal) initial data $(M, g)$ for spacetimes satisfying the dominant energy condition has been proposed by D. Christodoulou and S.-T. Yau \cite{Christodoulou-Yau:1988} as a ``quasi-local" measure of the gravitational field of the spacetime. Now stable constant mean curvature surfaces arise most naturally as boundaries of solutions to the isoperimetric problem. The role of the isoperimetric defect from Euclidean space,
\[
m_{iso} (\Omega)= \frac{2}{\area (\partial \Omega)} \left( \vol (\Omega) - \frac{\area(\partial \Omega)^{3/2}}{6 \sqrt \pi}\right), 
\]
of compact regions $\Omega \subset M$ in the development of quasi-local mass has been proposed and demonstrated by G. Huisken in e.g. \cite{Huisken:iso-mass, Huisken:MM-iso-mass-video}. In particular, the ADM-mass of the initial data (and thus the spacetime evolving from it) is encoded in the isoperimetric profile of $(M, g)$. In fact, 
\[
m_{ADM} = \lim_{V\to\infty} \frac{2}{A(V)} \left(V - \frac{A(V)^{3/2 }}{6\sqrt{\pi}} \right),
\]
as we discuss in Appendix \ref{sec:sharpisoperimetric}. In particular, the isoperimetric defect $m_{iso} (\Omega_V)$ of isoperimetric regions $\Omega_V$ of large volume $V>0$  \emph{must be} close to $m_{ADM}$. Now, as we recall in Section \ref{sec:divergentisos}, large isoperimetric regions $\Omega_V$ in $(M, g)$ look like Euclidean unit balls $B_1(\xi) \subset \R^3$ with center at $\xi \in \R^3$ when scaled by their volume in the chart at infinity \eqref{eqn:chartatinfinity}. When $|\xi| > 1$, we can use a delicate integration by parts inspired by the work of X.-Q. Fan, P. Miao, Y. Shi, and L.-F. Tam in \cite{Fan-Shi-Tam:2009} to relate the isoperimetric defect of $\Omega_V$ to the ``mass integral" of its boundary. Using that the scalar curvature is integrable, one sees that the isoperimetric defect of such a region is close to zero rather than $m_{ADM}$ --- a contradiction. 

When $|\xi| = 1$, the argument becomes much harder. First, we use the recent solution of a conjecture of R. Schoen due to O. Chodosh and M. Eichmair (discussed here at the beginning of Section \ref{sec:divergentisos}) to ensure that either $\Omega_V$ encloses the center of the manifold or that the unique large component $\Omega_V^\infty$ of $\Omega_V$ is far from the center of the manifold, with the distance diverging as $V \to \infty$. In the latter case, assuming also the boundary $\Omega_V^\infty$ is a topological sphere, we combine the Hawking mass estimate due to D. Christodoulou and S.-T. Yau \cite{Christodoulou-Yau:1988} with the monotonicity of the Hawking mass towards $m_{ADM}$ proven by G. Huisken and T. Ilmanen \cite{Huisken-Ilmanen:2001} to obtain strong analytic estimates for $\partial \Omega_V^\infty$. These estimates allow us to compare the isoperimetric deficit of $\Omega_V^\infty$ with that of a large outlying coordinate sphere to conclude as before that it is \emph{too} Euclidean. To handle the case where $\partial \Omega_V^\infty$ has the topology of a torus or where $\Omega_V$ includes the center of $(M, g)$, we combine a careful analysis of the Hawking mass of such surfaces with information about the canonical foliation. The case where $|\xi| < 1$ is covered by the uniqueness of the leaves of the canonical foliation. This is carried out in Sections \ref{sec:outlyingspheres}--\ref{sec:mainproofapproximate}. 

The proof of Theorem \ref{thm:main} sketched above only works when we impose the stronger decay assumptions \eqref{eqn:Masigma}, \eqref{eqn:MaR} on $(M, g)$. (Incidentally, the decay assumptions stated in Theorem \ref{thm:main} are those of the positive mass theorem.) We obtain Theorem \ref{thm:main} in the stated generality from a completely different line of argument that we develop in Section \ref{sec:MCF}. 

In this argument, we study the mean curvature flow of large isoperimetric surfaces. We prove that, upon appropriate rescaling, the flow of such large isoperimetric surfaces converges to the Euclidean flow $\{S_{\sqrt {1 - 4 t}} (\xi)\}_{t \in [0, 1/4)}$ of $S_1(\xi)$ in $\R^3$. When $\xi \neq 0$, part of this flow will be in a shell-like region that avoids the center of the manifold. Using a computation similar to one due to G. Huisken and T. Ilmanen in \cite{Huisken-Ilmanen:2001}, we show that the Hawking mass of the surfaces forming that shell is close to zero. Using this, we apply the monotonicity of the ``isoperimetric defect from Schwarzschild" discovered by G. Huisken and developed for weak mean curvature flow by J. Jauregui and D. Lee in \cite{Jauregui-Lee:2016} in \emph{two} steps to obtain a contradiction. First, we compare with Schwarzschild of mass $m_{ADM}$ until the time when the surfaces have jumped across the center of $(M,g)$. Then, we compare with Schwarzschild of mass $o(1) m_{ADM}$ until the surfaces have all but disappeared. In this argument we only need a very weak characterization of the leaves of the canonical foliation as being unique among stable constant mean curvature spheres in $(M, g)$.

The argument using mean curvature flow is effective, in that it leads to an explicit estimate on the isoperimetric deficit of large outward area-minimizing regions that are close to balls $B_1(\xi)$ when put on the scale of their volume. On the other hand, the analytic argument described above is likely to yield further information about stable constant mean curvature spheres and could also potentially apply to the study of large isoperimetric regions in asymptotically hyperbolic $3$-manifolds, where it is not possible to appeal to scaling. 

\ \\

\noindent {\bf Acknowledgments.} We sincerely thank Hubert Bray, Simon Brendle, Gerhard Huisken, Jan Metzger, Richard Schoen, and Brian White for their example, encouragement, and for their support. We also thank Christopher Nerz for sharing with us his technical expertise about the canonical foliations and  Felix Schulze and Lu Wang for helpful discussions about the  Brakke flow. Otis Chodosh is supported in part by the EPSRC grant EP/K00865X/1. Michael Eichmair is supported in part by the FWF START-Programme. 


\section {Tools}

Estimate \eqref{CYspheres} below is due to D. Christodoulou and S.-T. Yau \cite{Christodoulou-Yau:1988}. A variation of their argument as in Theorem 12 of \cite{Ros:2005} due to A. Ros gives estimate \eqref{CYtori}. 

\begin {lemma} \label{lem:CY}
Let $\Sigma \subset M$ be a connected closed stable constant mean curvature surface in a Riemannian $3$-manifold $(M, g)$. Then 
\begin{align} \label{CYtori}
H^2 \area (\Sigma) + \frac{2}{3} \int_\Sigma (R + |\mathring {h}|^2) d \mu  \leq \frac{64 \pi}{3}.
\end{align}
When $\Sigma$ is a sphere, then
\begin{align} \label{CYspheres}
H^2 \area (\Sigma) + \frac{2}{3} \int_\Sigma (R + |\mathring {h}|^2) d \mu \leq  16 \pi.
\end{align}
Here, $R$ denotes the ambient scalar curvature and $H$ and $\mathring{h}$ denote, respectively, the constant scalar mean curvature and the trace-free part of the second fundamental form of $\Sigma$ with respect to a choice of unit normal, and $d \mu$ is the area element of $\Sigma$ with respect to the induced metric.
\end {lemma}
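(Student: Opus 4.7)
The plan is to adapt the test function argument of Christodoulou--Yau, extended to arbitrary topology via the Meis gonality bound in the spirit of Ros. The stability of $\Sigma$ as a volume-preserving critical point of area yields, for every $f \in C^\infty(\Sigma)$ with $\int_\Sigma f \, d\mu = 0$,
\[
\int_\Sigma |\nabla f|^2 \, d\mu \geq \int_\Sigma \bigl( |A|^2 + \Ric(\nu, \nu) \bigr) f^2 \, d\mu,
\]
where $A$ denotes the full second fundamental form of $\Sigma$ with unit normal $\nu$.

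By uniformization, I would fix a non-constant conformal (possibly branched) map $\phi: \Sigma \to S^2 \subset \R^3$. Its degree $d$ may be taken equal to the gonality of $\Sigma$, so $d = 1$ when $\Sigma$ is a sphere and, in general, Meis's theorem gives the uniform topological inequality $4 d + \chi(\Sigma) \leq 8$. After post-composing $\phi$ with a suitable Möbius automorphism of $S^2$, Hersch's trick arranges $\int_\Sigma (x^i \circ \phi) \, d\mu = 0$ for $i = 1, 2, 3$. Plugging $f = x^i \circ \phi$ into the stability inequality, summing in $i$, and invoking the standard conformal identities $\sum_i (x^i)^2 \equiv 1$ on $S^2$ and $\int_\Sigma \sum_i |\nabla(x^i \circ \phi)|^2 \, d\mu = 8 \pi d$, I obtain
\[
\int_\Sigma \bigl( |A|^2 + \Ric(\nu, \nu) \bigr) \, d\mu \leq 8 \pi d.
\]

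To convert this into a statement involving $R + |\mathring h|^2$, I would combine the Gauss equation with the algebraic identity $|A|^2 = |\mathring h|^2 + H^2/2$ (valid for a surface in a $3$-manifold) to rewrite
\[
|A|^2 + \Ric(\nu, \nu) = \tfrac{1}{2} |\mathring h|^2 + \tfrac{3}{4} H^2 + \tfrac{1}{2} R - K_\Sigma,
\]
where $K_\Sigma$ is the intrinsic Gauss curvature of $\Sigma$. Integration against $d\mu$ together with Gauss--Bonnet $\int_\Sigma K_\Sigma \, d\mu = 2 \pi \chi(\Sigma)$ transforms the previous bound into
\[
H^2 \area(\Sigma) + \tfrac{2}{3} \int_\Sigma \bigl( R + |\mathring h|^2 \bigr) \, d\mu \leq \tfrac{8 \pi}{3} \bigl( 4 d + \chi(\Sigma) \bigr).
\]
When $\Sigma$ is a sphere, $d = 1$ and $\chi(\Sigma) = 2$, yielding the sharper bound $16 \pi$ of \eqref{CYspheres}; in general, the topological inequality $4 d + \chi(\Sigma) \leq 8$ produces the bound $64 \pi / 3$ of \eqref{CYtori}.

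The essential obstacle is justifying the uniform topological inequality $4 d + \chi(\Sigma) \leq 8$ across all genera; this is precisely where Ros's extension of the Christodoulou--Yau argument brings in classical algebraic geometry, via uniformization and Meis's gonality theorem for compact Riemann surfaces. Once the conformal covering is chosen and Hersch's normalization is in place, everything else is a direct calculation based on the second variation formula and the Gauss equation.
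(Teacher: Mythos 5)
Your argument is correct and is precisely the proof the paper points to: it states \eqref{CYspheres} as Christodoulou--Yau's test-function computation (stability tested against Hersch-balanced coordinates of a degree-one conformal map to $S^2$, combined with the Gauss equation and Gauss--Bonnet) and \eqref{CYtori} as Ros's variation using a branched conformal map of degree bounded by Meis's gonality theorem, which gives exactly your inequality $4d + \chi(\Sigma) \leq 8$. The intermediate identities and the final bookkeeping all check out, so nothing further is needed.
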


The elementary fact stated in the lemma below follows from an explicit ``cut and paste" argument by comparison with balls $B_\rho$ for $\rho > 1$ large. 

\begin {lemma} \label{lem:quadraticgrowth}
Let $(M, g)$ be a complete Riemannian $3$-manifold that is asymptotically flat. There is a constant $c > 0$ depending only on $(M, g)$ such that, for every isoperimetric region $\Omega \subset M$,
\[
\area (B_\rho \cap \partial \Omega) \leq c \rho^2
\]
for all $\rho > 1$.
\end {lemma}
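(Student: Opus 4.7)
My plan is a cut-and-paste comparison against the coordinate ball $B_\rho$. Let $\Omega \in \mathcal{R}_V$ be isoperimetric and $\rho > 1$. Write $\Sigma = \partial\Omega \setminus \partial M$ and $v = \vol(B_\rho \setminus \Omega)$. By the asymptotic flatness of $(M,g)$, $v \leq \vol(B_\rho) \leq C\rho^3$ and $\area(\partial B_\rho) \leq C\rho^2$, with $C$ depending only on $(M,g)$. Since $\partial M \subset B_\rho$ for all $\rho > 1$, the competitor $\tilde\Omega = \Omega \cup B_\rho$ lies in $\mathcal{R}_{V+v}$, and its free boundary $\tilde\Sigma = \partial\tilde\Omega \setminus \partial M$ is contained in $(\Sigma \setminus B_\rho) \cup \partial B_\rho$, so
\[
A(V+v) \leq \area(\tilde\Sigma) \leq \area(\Sigma \setminus B_\rho) + \area(\partial B_\rho) \leq A(V) - \area(\Sigma \cap B_\rho) + C\rho^2.
\]
Rearranging, $\area(\Sigma \cap B_\rho) \leq A(V) - A(V+v) + C\rho^2$, so it suffices to bound $A(V) - A(V+v) \leq C\rho^2$.

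I would handle this in two regimes. When $V \leq C_1 \rho^3$, the a priori bound $A(V) \leq C(1 + V^{2/3}) \leq C_2 \rho^2$---obtained by comparing $\Omega$ to the union of a thin tubular neighborhood of $\partial M$ and a coordinate ball of the requisite volume in the asymptotic region---combined with $A(V+v) \geq 0$ trivially gives $A(V) - A(V+v) \leq C_2 \rho^2$. When $V > C_1 \rho^3$, so that $V + v$ dominates $v$ up to a uniform multiplicative factor, I would perform a second cut-and-paste: remove a coordinate ball of volume $v$ from the interior of an isoperimetric region of volume $V + v$, away from $\partial M$. This produces a competitor for volume $V$ with free boundary of area at most $A(V+v) + C v^{2/3}$, yielding $A(V) \leq A(V+v) + C v^{2/3} \leq A(V+v) + C\rho^2$.

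Combining the two regimes and adding the fixed quantity $\area(\partial M \cap B_\rho) \leq \area(\partial M)$ completes the proof: $\area(\partial\Omega \cap B_\rho) \leq c\rho^2$ for $\rho > 1$, uniformly in $\Omega$. The main technical obstacle is the second regime, where one must extract an interior coordinate ball of volume $v$ from an isoperimetric region of volume $V + v$ with area bound $Cv^{2/3}$; this requires a uniform lower bound on the inradius of large isoperimetric regions, which is a standard consequence of the regularity theory for isoperimetric boundaries and the near-roundness of large isoperimetric regions in AF $3$-manifolds.
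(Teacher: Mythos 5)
Your reduction $\area(\Sigma\cap B_\rho)\le A(V)-A(V+v)+C\rho^2$ and the first regime ($V\le C_1\rho^3$, using $A(V)\le C(1+V^{2/3})$) are fine. The gap is in the second regime. The inequality $A(V)\le A(V+v)+Cv^{2/3}$ requires you to excise a coordinate ball of volume $v$ from the interior of a region of volume $V+v$, and when $V\sim C_1\rho^3$ this ball may have to carry a definite fraction of the total volume. You justify its existence by a uniform inradius bound coming from ``the near-roundness of large isoperimetric regions.'' This does not work here for two reasons. First, the lemma is stated for an arbitrary asymptotically flat $(M,g)$ and for \emph{every} isoperimetric region, whereas the structure theory you invoke (Lemma \ref{lem:decomposition} and the $C^\infty_{loc}$ convergence of the rescaled regions to $B_1(\xi)$) assumes non-negative scalar curvature and positive mass and only governs regions of sufficiently large volume. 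Second, and worse, that structure theory is established in the cited references by compactness arguments which take uniform quadratic area bounds of exactly the present kind as input, so the appeal is circular; Lemma \ref{lem:quadraticgrowth} is meant to be an elementary tool available \emph{before} any of that. (The existence of a minimizer at volume $V+v$ is likewise not guaranteed under the lemma's hypotheses, though that part is patchable with near-minimizers.)

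The difficulty disappears if you run the cut-and-paste in the opposite direction, which is what the paper's one-line remark intends: excise centrally and restore the volume far away, rather than add centrally and then try to remove volume. Fix once and for all a radius $\rho_0>1$ with $K\subset B_{\rho_0}$, and for $\rho\ge\rho_0$ compare $\Omega$ with $\bigl(\Omega\setminus(B_\rho\setminus \overline{B_{\rho_0}})\bigr)\cup \overline{B_s(p)}$, where $p$ is chosen so far out that $B_s(p)$ is disjoint from $\Omega\cup B_\rho$ and $s$ is chosen so that $\vol(B_s(p))=\vol(\Omega\cap(B_\rho\setminus\overline{B_{\rho_0}}))\le C\rho^3$, whence $\area(\partial B_s(p))\le C\rho^2$. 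This competitor has the same volume as $\Omega$ and still contains the required collar of $\partial M$, so minimality gives $\area(\partial\Omega\cap(B_\rho\setminus\overline{B_{\rho_0}}))\le \area(S_\rho)+\area(S_{\rho_0})+\area(\partial B_s(p))\le C\rho^2$ in one step, with no case analysis, no monotonicity or Lipschitz control of the profile, and no inradius bound; the remaining area $\area(\partial\Omega\cap B_{\rho_0})$ in the fixed compact core is bounded uniformly by the same excision applied to small balls (a volume of size $O(r^3)$ removed near a point is always restorable by a disjoint far-away round ball of area $O(r^2)$). Adding volume at infinity is always possible; it is removing volume that forced you into the inradius discussion, and your choice of competitor puts the volume error on that harder side.
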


The following lemma is a standard consequence of the ``layer-cake representation" of a function. 

\begin {lemma} \label{lem:polydecay}
Let $(M, g)$ be an asymptotically flat $3$-manifold. Let $\Sigma \subset M \setminus K$ be a surface such that, for some $c > 0$, 
\[
\area (B_\rho \cap \Sigma) \leq c \rho^2
\]
for all $\rho > 1$. Then, for $\alpha > 0$ and $1 < \sigma \leq \rho$,
\[
 \int_{\Sigma \cap (B_{\rho} \setminus B_{\sigma})} |x|^{- \alpha} d \mu \leq  \frac {  \area ( \Sigma \cap (B_\rho \setminus B_\sigma)) }{\rho^\alpha}  + c \, \alpha \int_{\sigma}^{\rho} t^{1 - \alpha} \, d t. 
\]
\end {lemma}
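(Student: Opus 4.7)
The plan is a direct application of the ``layer-cake'' idea hinted at by the authors. First I would represent $|x|^{-\alpha}$ on the region where $|x|\le\rho$ by the elementary identity
\[
|x|^{-\alpha}=\rho^{-\alpha}+\int_{|x|}^{\rho}\alpha\, t^{-\alpha-1}\,dt,
\]
which is just the fundamental theorem of calculus applied to $t\mapsto -t^{-\alpha}$. Integrating this identity against $d\mu$ over $\Sigma\cap(B_{\rho}\setminus B_{\sigma})$ immediately produces the first term on the right-hand side of the claimed inequality, namely $\area(\Sigma\cap(B_{\rho}\setminus B_{\sigma}))/\rho^{\alpha}$.

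For the remaining integral
\[
\int_{\Sigma\cap (B_{\rho}\setminus B_{\sigma})}\int_{|x|}^{\rho}\alpha\, t^{-\alpha-1}\,dt\,d\mu,
\]
I would swap the order of integration using Tonelli's theorem. Since $|x|>\sigma$ on the region of integration, the inner variable $t$ ranges over $(\sigma,\rho]$, and for fixed $t$ the set of points $p\in\Sigma\cap(B_{\rho}\setminus B_{\sigma})$ satisfying $|p|\le t$ is precisely $\Sigma\cap(B_{t}\setminus B_{\sigma})$. Thus the double integral rewrites as
\[
\int_{\sigma}^{\rho}\alpha\, t^{-\alpha-1}\area\bigl(\Sigma\cap(B_{t}\setminus B_{\sigma})\bigr)\,dt.
\]

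Finally, I would apply the quadratic area growth hypothesis $\area(\Sigma\cap B_{t})\le c\,t^{2}$ to bound the inner area by $c\,t^{2}$, obtaining the upper bound $c\,\alpha\int_{\sigma}^{\rho}t^{1-\alpha}\,dt$, which matches the second term in the statement. Combining the two pieces proves the lemma. There is no substantive obstacle here; the only point demanding minor care is the Tonelli step and keeping track that the restriction to $\Sigma\cap(B_{\rho}\setminus B_{\sigma})$ is what converts the lower limit of the inner integral from $1$ to $\sigma$, so that the logarithmic case $\alpha=2$ does not produce a divergent contribution from the center.
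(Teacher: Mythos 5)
Your proof is correct and is exactly the ``layer-cake'' argument the paper has in mind (the paper gives no proof, stating only that the lemma is a standard consequence of the layer-cake representation): write $|x|^{-\alpha}=\rho^{-\alpha}+\int_{|x|}^{\rho}\alpha t^{-\alpha-1}\,dt$, apply Tonelli, and bound $\area(\Sigma\cap(B_t\setminus B_\sigma))\leq c\,t^2$. Nothing further is needed.
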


For the statement of the next lemma, recall from Section 4 in \cite{Huisken-Ilmanen:2001} that $M$ is diffeomorphic to the complement in $\R^3$ of a finite union of open balls with disjoint closures. Fix a complete Riemannian manifold $(\hat M, \hat g)$ with $\hat M \cong \R^3$ that contains $(M, g)$ isometrically. We think of $M$ as being included in $\hat M$ below. 

\begin {lemma} [\protect{\cite[Section 6]{Huisken-Ilmanen:2001}}] \label{lem:monotonicity} 
Let $(M, g)$ be a complete Riemannian $3$-manifold that is asymptotically flat with non-negative scalar-curvature. Let $\Sigma \subset M$ be a connected closed surface that is outward area-minimizing in $(\hat M, \hat g)$. Then
\begin{align} \label{eqn:monotonicity}
\sqrt {\frac{\area (\Sigma)}{16 \pi} }  \left( 1 - \frac{1}{16 \pi} \int_\Sigma H^2 d \mu \right)  \leq m_{ADM}.
\end{align}
\end {lemma}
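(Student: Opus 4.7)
The plan is to run the weak inverse mean curvature flow (IMCF) of Huisken and Ilmanen in $(\hat M, \hat g)$, starting from $\Sigma$, and to exploit the Geroch-type monotonicity of the Hawking mass. First I would construct the weak flow $\{\Sigma_t\}_{t \geq 0}$ with $\Sigma_0 = \Sigma$ via the level-set formulation of \cite{Huisken-Ilmanen:2001}, which only requires that $(\hat M, \hat g)$ be a complete Riemannian $3$-manifold of $\R^3$ topology. Since $\Sigma$ is outward area-minimizing in $(\hat M, \hat g)$, it coincides with its own outward minimizing hull and the flow experiences no initial jump at $t = 0$. Because the weak flow expands outward from $\Sigma$ and the exterior of $\Sigma$ in $(\hat M, \hat g)$ agrees with the corresponding region of $(M, g)$, each $\Sigma_t$ lies in the region $M$, where the scalar curvature is non-negative.

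Next, I would invoke the monotonicity of the Hawking mass
$$m_H(\Sigma_t) = \sqrt{\frac{\area(\Sigma_t)}{16 \pi}}\left( 1 - \frac{1}{16 \pi} \int_{\Sigma_t} H^2\, d \mu \right)$$
along the weak flow, proven in Section 5 of \cite{Huisken-Ilmanen:2001}. On smooth portions of the flow this follows from the classical Geroch calculation, which depends on the non-negativity of the ambient scalar curvature and on the Gauss-Bonnet term $4\pi \chi(\Sigma_t) \leq 8 \pi$. At jump times, the weak flow replaces $\Sigma_t$ by its outward minimizing hull, and a key feature of the Huisken-Ilmanen construction is that $m_H$ does not decrease across such jumps. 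Together with the connectedness of the outermost component past $t = 0$ (see \cite[Section 6]{Huisken-Ilmanen:2001}), this yields that $t \mapsto m_H(\Sigma_t)$ is non-decreasing.

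Finally, I would analyze the behavior as $t \to \infty$. The long-time existence and blow-down analysis in \cite[Sections 7--8]{Huisken-Ilmanen:2001} show that $\area(\Sigma_t) \to \infty$ and that $\Sigma_t$, rescaled by $\area(\Sigma_t)^{-1/2}$, converges to a unit round sphere in the chart at infinity. A direct computation using the expansion \eqref{eqn:AF} gives $m_H(S_\rho) \to m_{ADM}$ as $\rho \to \infty$, and the blow-down convergence transfers this limit to $\Sigma_t$, yielding $\lim_{t \to \infty} m_H(\Sigma_t) = m_{ADM}$. Combined with the monotonicity, this produces \eqref{eqn:monotonicity}. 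The main obstacle is entirely in the weak formulation: verifying that jumps preserve $m_H$, that the Gauss-Bonnet contribution never spoils the sign, and that the blow-down picks out the correct asymptotic Hawking mass. All of these points are established in Section 6 of \cite{Huisken-Ilmanen:2001}, and the proof here is in essence a citation of that material.
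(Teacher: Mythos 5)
Your argument is correct and takes essentially the same route as the paper, which states this lemma without proof as a direct citation of \cite[Section 6]{Huisken-Ilmanen:2001}: your proposal is an exposition of exactly that cited material (weak inverse mean curvature flow from the outward-minimizing surface, Geroch monotonicity of the Hawking mass including across jumps, and the asymptotic comparison giving $m_H(\Sigma_t) \to m_{ADM}$). The one point to phrase with slightly more care is that the flow should be run in $M$ itself rather than in the extension $(\hat M, \hat g)$, whose scalar curvature inside the filled-in balls is not controlled; this is how Huisken--Ilmanen set things up, and jumps over the minimal boundary components only add area while contributing nothing to $\int H^2$, so the monotonicity is unaffected.
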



\section {Divergent sequences of isoperimetric regions} \label{sec:divergentisos}

The following result due to O. Chodosh and M. Eichmair is included as Corollary 1.13 in \cite{mineffectivePMT}. It is a consequence of the solution of the following conjecture of R. Schoen: The only asymptotically flat Riemannian $3$-manifold with non-negative scalar curvature that admits a non-compact area-minimizing boundary is flat Euclidean space.

\begin {lemma} \label{lem:characterizationlargeiso}
Let $(M, g)$ be a complete Riemannian $3$-manifold that is asymptotically flat with non-negative scalar curvature and positive mass. Let $U \subset M$ be a bounded open subset that contains the boundary of $M$. There is $V_0 > 0$ so that for every isoperimetric region $\Omega_V$ of volume $V \geq V_0$, either $U \subset \Omega_V$ or $U \cap \Omega_V$ is a thin  smooth region that is bounded by the components of $\partial M$ and nearby stable constant mean curvature surfaces.
\end {lemma}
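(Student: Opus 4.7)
I would argue by contradiction, working with a sequence of counterexamples. Suppose there exist volumes $V_i \to \infty$ and isoperimetric regions $\Omega_{V_i} \in \mathcal{R}_{V_i}$ such that $U \not\subset \Omega_{V_i}$ and such that $U \cap \Omega_{V_i}$ is not a thin smooth collar of the components of $\partial M$ bounded by nearby stable constant mean curvature surfaces. The plan is to pass to a subsequential limit of $\partial \Omega_{V_i}$ inside $\hat M$ and derive a contradiction either with the Schoen conjecture recalled in the statement or with the absence of closed minimal surfaces in the interior of $M$.

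First I would collect the essential a priori information. The surfaces $\partial \Omega_{V_i}$ are stable constant mean curvature surfaces, and by Lemma \ref{lem:quadraticgrowth} we have $\area(B_\rho \cap \partial \Omega_{V_i}) \leq c \rho^2$ uniformly in $i$ for each $\rho > 1$. Since $V_i \to \infty$ and the isoperimetric profile of $(M,g)$ satisfies $A(V) \sim (36 \pi)^{1/3} V^{2/3}$ (as discussed in the introduction), the mean curvatures $H_i$ of $\partial \Omega_{V_i}$ tend to zero. Combining the local area bound with the stability inequality and standard curvature estimates for stable constant mean curvature surfaces of small mean curvature, I would then extract a subsequence along which $\partial \Omega_{V_i}$ converges locally smoothly, away from finitely many points of curvature concentration, to a complete properly embedded stable minimal surface $\Sigma_\infty \subset \hat M$. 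Correspondingly, $\Omega_{V_i}$ (viewed as Caccioppoli sets in $\hat M$) converge in $L^1_{loc}$ to a region $\Omega_\infty$ whose reduced boundary equals $\Sigma_\infty$ and whose boundary is locally area-minimizing, since it is the $L^1_{loc}$ limit of boundaries that locally minimize area subject to a vanishing volume constraint.

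Now I would dichotomize on whether $\Sigma_\infty$ has a non-compact component. If it does, then since $\Omega_\infty$ has locally area-minimizing boundary, the solution of the Schoen conjecture by Chodosh and Eichmair forces $(M, g)$ to be flat Euclidean space, contradicting positive mass. Hence every component of $\Sigma_\infty$ is a closed minimal surface in $\hat M$. Since $(M, g)$ contains no closed minimal surfaces in its interior and the only closed minimal surfaces of $\hat M$ that meet $M$ are the components of $\partial M$, the limit $\Sigma_\infty$ can only consist of components of $\partial M$ together with possibly some closed minimal surfaces lying in $\hat M \setminus M$. By the maximum principle and the outer-minimizing character of $\partial M$ inside $\hat M$, the region $\Omega_\infty \cap M$ is then either all of $M$ or an arbitrarily thin one-sided neighborhood of the components of $\partial M$. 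In the first case, since $U$ is bounded, smooth convergence of $\Omega_{V_i}$ to $\Omega_\infty = M$ on a neighborhood of $\overline{U}$ gives $U \subset \Omega_{V_i}$ for $i$ large, contradiction. In the second case, smooth convergence of $\partial \Omega_{V_i}$ to $\partial M$ on a neighborhood of $\overline U$ shows that $U \cap \Omega_{V_i}$ is, for $i$ large, exactly the thin smooth collar described in the statement, again a contradiction.

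The hard part will be ensuring that the subsequential limit $\Sigma_\infty$ genuinely is a complete stable minimal boundary to which the resolution of Schoen's conjecture applies: one needs to rule out the degenerate scenario in which all mass of $\partial \Omega_{V_i}$ escapes to infinity on every bounded set, and to justify that the locally smooth convergence upgrades to $L^1_{loc}$ convergence of the enclosed regions with the right one-sidedness (so that $\Sigma_\infty$ really bounds $\Omega_\infty$ and the area-minimizing property passes to the limit). This is where the quadratic area bound of Lemma \ref{lem:quadraticgrowth}, combined with the uniform curvature and stability estimates for stable constant mean curvature surfaces with $H_i \to 0$, must be used carefully; once these compactness ingredients are in place, the rest of the argument is a relatively clean application of the Chodosh--Eichmair theorem plus the hypothesis that there are no closed minimal surfaces in the interior of $M$.
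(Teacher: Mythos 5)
The paper does not actually prove this lemma: it quotes it as Corollary 1.13 of \cite{mineffectivePMT} and only indicates that it is a consequence of the Chodosh--Eichmair resolution of Schoen's conjecture together with the hypothesis that $M$ contains no closed minimal surfaces in its interior. Your reconstruction follows exactly that route --- extract a subsequential limit of the free boundaries, argue that it is a locally area-minimizing boundary, and dichotomize between a non-compact component (excluded by the Schoen conjecture and $m_{ADM}>0$) and closed components (which must be components of $\partial M$). In outline this matches the argument the paper is pointing to, and the architecture is sound.

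Two steps need more than the justification you give. First, the inference ``$A(V)\sim(36\pi)^{1/3}V^{2/3}$, hence $H_i\to 0$'' is not valid as stated: asymptotics of $A$ do not control its one-sided derivatives pointwise, and $H_i$ is only pinched between $A'^{+}(V_i)$ and $A'^{-}(V_i)$. You need either the comparison estimate $A(V)-A(W)\leq C(V-W)\,W^{-1/3}$ for $W<V$ (restore the missing volume by a far-out coordinate ball), which bounds $A'^{-}(V_i)$ from above, or, more directly, the Christodoulou--Yau inequality \eqref{CYtori} applied to the largest component of $\partial\Omega_{V_i}\setminus\partial M$, whose area must diverge. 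This decay is load-bearing: it is exactly what makes the $\Lambda$-minimality constant of $\Omega_{V_i}$ vanish, so that the limit is a genuine area-minimizing boundary to which the Schoen conjecture applies, rather than merely a stable minimal surface. Second, to reach the stated conclusion that $U\cap\Omega_{V_i}$ is a thin region bounded by $\partial M$ and \emph{nearby} stable constant mean curvature surfaces, you must rule out convergence of the free boundary to a component $N$ of $\partial M$ with multiplicity at least two; otherwise $U\cap\Omega_{V_i}$ could contain a thin slab detached from $N$, which is not the asserted picture. This is excluded by a cut-and-paste argument: deleting (or filling) such a slab removes boundary area close to $2\area(N)$ while changing the volume by $o(1)$, which can be compensated elsewhere at area cost $O(H_i)\cdot o(1)$, contradicting minimality. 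Both points are standard and are carried out in Section 5 of \cite{stablePMT} and in \cite{mineffectivePMT}, but as written your sketch leaves them open.
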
 

The conclusion of the lemma clearly fails in Euclidean space. Under the additional assumption that the scalar curvature of $(M, g)$ is everywhere positive, this result was observed by M. Eichmair and J. Metzger as Corollary 6.2 in \cite{isostructure}. Together with elementary observations on the number of components of large isoperimetric regions as in Section 5 of \cite{stablePMT} and the proof of Theorem 1.12 in \cite{mineffectivePMT}, we obtain the following dichotomy for sequences of isoperimetric regions with divergent volumes:

\begin {lemma} \label{lem:decomposition} 
Let $(M, g)$ be a complete Riemannian $3$-manifold that is asymptotically flat with non-negative scalar curvature and positive mass. Let $\Omega_{V_k}$ be an isoperimetric region of volume $V_k$ where $V_k \to \infty$. After passing to a subsequence, exactly one of the following alternatives occurs: 
\begin {enumerate} [(a)]
\item Each $\Omega_{V_k}$ is connected, $(\partial \Omega_{V_k}) \setminus \partial M$ is connected, and the sequence is increasing to $M$.
\item Each $\Omega_{V_k}$ splits into unions of connected components $\Omega_{V_k}^{res}$ and $\Omega_{V_k}^\infty$ where the $\Omega_{V_k}^\infty$ are connected with connected boundary and divergent in $M$ as $k \to \infty$, and where each $\Omega_{V_k}^{res}$ is contained in an $\varepsilon_k$-neighborhood of the boundary of $M$ where $\varepsilon_k \to 0$ as $k \to \infty$.
\end {enumerate}
\end {lemma}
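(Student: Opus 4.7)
The plan is to combine the ``center versus end'' dichotomy of Lemma \ref{lem:characterizationlargeiso} with a volume--area subadditivity argument, organized in three steps. First I would fix a nested exhaustion $\partial M \subset U_1 \subset U_2 \subset \cdots$ of $M$ by bounded open sets with $\overline{U_n} \subset U_{n+1}$ and $\bigcup_n U_n = M$, and for each $n$ apply Lemma \ref{lem:characterizationlargeiso} with $U = U_n$ to obtain a threshold $V_0^{(n)}$ above which every isoperimetric region $\Omega_V$ either contains $U_n$ or meets $U_n$ only in a thin smooth collar of $\partial M$. A diagonal extraction applied to $\{\Omega_{V_k}\}$ then produces a subsequence (still called $\Omega_{V_k}$) for which exactly one of these two possibilities holds for every $k$, and for which the collar thickness $\varepsilon_k$ in the second case tends to zero. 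This gives the rough shape of alternatives (a) and (b).

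The heart of the argument is to show that, after a further extraction, the part of $\Omega_{V_k}$ not captured by the collar is connected with connected boundary. I would argue by contradiction, using that a large isoperimetric region in $(M,g)$, when rescaled by its volume, converges in the chart at infinity to a Euclidean unit ball (the standard blow-down recalled at the beginning of Section \ref{sec:divergentisos}); in particular such a region carries area asymptotic to $(36\pi)^{1/3} V^{2/3}$. If $\Omega_{V_k}$ decomposed into at least two components each of volume bounded below by a fixed positive multiple of $V_k$, then by Step~1 at most one of them could meet the fixed compact neighbourhood of $\partial M$, so the others would diverge into the asymptotic end and be essentially Euclidean, and strict concavity of $t \mapsto t^{2/3}$ would give
\[
\area(\partial \Omega_{V_k}) \geq \sum_i (36\pi)^{1/3}(V_k^{(i)})^{2/3}(1+o(1)) > (36\pi)^{1/3} V_k^{2/3}(1+o(1)),
\]
contradicting the minimality of $\Omega_{V_k}$ against a nearly optimal single coordinate sphere of volume $V_k$. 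The same argument rules out multiple large components of $(\partial \Omega_{V_k}) \setminus \partial M$, since any such components are stable constant mean curvature surfaces sharing the Lagrange-multiplier mean curvature, and Lemma \ref{lem:CY} provides a quantitative area lower bound in terms of the (vanishing) mean curvature that is again incompatible with subadditivity.

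Putting the two steps together produces the stated dichotomy. In alternative (a) one has $U_k \subset \Omega_{V_k}$ for every $k$, so the sequence exhausts $M$ and the previous step yields connectedness of $\Omega_{V_k}$ and of $(\partial \Omega_{V_k}) \setminus \partial M$. In alternative (b) I would set $\Omega_{V_k}^{res} := \Omega_{V_k} \cap U_k$ and $\Omega_{V_k}^\infty := \Omega_{V_k} \setminus \overline{U_k}$; Lemma \ref{lem:characterizationlargeiso} immediately identifies $\Omega_{V_k}^{res}$ as a thin smooth region around $\partial M$ of thickness $\varepsilon_k \to 0$, the previous step gives that $\Omega_{V_k}^\infty$ is connected with connected boundary, and $\Omega_{V_k}^\infty$ diverges in $M$ because for any compact $K \subset M$ we eventually have $K \subset U_k$ and hence $\Omega_{V_k}^\infty \cap K = \emptyset$.

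The main obstacle will be making the subadditivity argument quantitative enough: one needs an asymptotic expansion for the isoperimetric profile $A(V)$ --- or, equivalently, a family of comparison regions of volume $V$ whose areas exceed $(36\pi)^{1/3} V^{2/3}$ by only $o(V^{2/3})$ --- that is sharp enough to beat the strict-subadditivity gain coming from splitting, and one must simultaneously rule out a ``dust'' of infinitely many small bounded components lingering in the asymptotic end (handled by a further subadditivity argument on the end, as carried out in Section~5 of \cite{stablePMT} and in the proof of Theorem~1.12 of \cite{mineffectivePMT}, whose bookkeeping I would follow).
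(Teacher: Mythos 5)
Your overall architecture coincides with the paper's: the authors obtain this dichotomy precisely by combining Lemma \ref{lem:characterizationlargeiso} with the component-counting arguments of Section 5 of \cite{stablePMT} and the proof of Theorem 1.12 in \cite{mineffectivePMT}, which is exactly the exhaustion-plus-strict-subadditivity bookkeeping you describe; the paper offers no further detail beyond these citations, so your reconstruction is the intended proof.

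One mechanism in your sketch is stated backwards, however. The Christodoulou--Yau estimate \eqref{CYtori} reads $H^2 \area(\Sigma) + \tfrac{2}{3}\int_\Sigma (R + |\mathring{h}|^2)\, d\mu \leq \tfrac{64\pi}{3}$ with every term on the left non-negative, so it yields an area \emph{upper} bound $\area(\Sigma) \leq \tfrac{64\pi}{3} H^{-2}$ for each boundary component --- not the lower bound you invoke to rule out a second large component of $(\partial \Omega_{V_k}) \setminus \partial M$, and an upper bound on each piece cannot contradict subadditivity. The lower bound you actually need --- that each component of the boundary lying in the asymptotic end, carrying the common Lagrange-multiplier mean curvature $H \sim 2/\lambda_k$, has area at least $(16\pi/H^2)(1 - o(1)) \sim 4\pi \lambda_k^2$ --- comes instead from the Willmore inequality $\int_\Sigma \overline{H}^2 \, d\overline{\mu} \geq 16\pi$ transplanted to the almost-Euclidean end as in Lemma \ref{lem:estimateHawkingtori}; since the total boundary area is $4\pi\lambda_k^2(1+o(1))$, this forces a single component. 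Alternatively, and closer to the cited references, connectedness of the boundary of the large component follows by filling in any bounded component $D$ of $M \setminus \Omega_{V_k}$: the region $\Omega_{V_k} \cup \overline{D}$ has strictly larger volume and strictly smaller perimeter, contradicting the strict monotonicity of the isoperimetric profile. With either repair your argument closes.
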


In particular, every isoperimetric region $\Omega_V$ in $(M, g)$ of sufficiently large volume $V > 0$ has \emph{exactly one} large connected component --- either $\Omega_V$ in alternative (a) or $\Omega_V^\infty$ in alternative (b).  

We include several additional observations --- extracted from the proofs of Theorem 1.2 in \cite{hdiso} and Theorem 1.12 in  \cite {mineffectivePMT} ---  about the sequences in Lemma \ref{lem:decomposition}. Let 
\[
\tilde \Omega_{V_k} \subset \{x \in \R^3 : \lambda_k |x| > 1/2\}
\]
be such that
\[
\Omega_{V_k} \setminus K \cong \{ \lambda_k x : x \in \tilde \Omega_{V_k} \} 
\]
where
\[
\lambda_k = \sqrt[3] { (3 V_k)/(4 \pi)}.
\]
Then, possibly after passing to a further subsequence, 
\[
\tilde \Omega_{V_k} \to B_1(\xi)
\]
in $C_{loc}^\infty (\R^3 \setminus \{0\})$ for some $\xi \in \R^3$. In particular, 
\begin{align}
\area (\Sigma_{V_k}) &= 4 \pi \lambda_k^2 (1 + o (1)) \\
H_{\Sigma_{V_k}}  &=  2 (1 + o (1))/\lambda_k
\end{align}
as $k \to \infty$ where $\Sigma_{V_k} = \partial \Omega_{V_k}  \setminus \partial M$. 

We will show in the proof of Theorem \ref{thm:main} that $\xi = 0$. In other words, alternative (b) in Lemma \ref{lem:decomposition} never occurs. \\

In the statement of the following lemma, we use the notation of Lemma \ref{lem:monotonicity}.

\begin {lemma} \label{lem:HIiso}
Let $(M, g)$ be a complete Riemannian $3$-manifold that is asymptotically flat with non-negative scalar curvature.
The outer boundary $\Sigma = \partial \Omega \setminus \partial M$ of the unique large component $\Omega$ of a large isoperimetric region $\Omega_V$ in $(M, g)$ is connected and outward area-minimizing in $(\hat M, \hat g)$. In particular, 
\[
\sqrt {\frac{\area (\Sigma)}{16 \pi} }  \left( 1 - \frac{1}{16 \pi} \int_\Sigma H^2 d \mu \right)  \leq m_{ADM}.
\]
\begin {proof} We have already seen that $\Sigma$ is connected. 
Let $\hat \Omega \subset \hat M$ be the least area enclosure of $\Omega$ in $(\hat M, \hat g)$. Recall from e.g. Theorem 1.3 in \cite{Huisken-Ilmanen:2001} that the boundary $\hat \Sigma$ of $\hat \Omega$ is $C^{1, 1}$ and smooth away from the coincidence set $\hat \Sigma \cap \Sigma$. Assume that $\hat \Omega \neq \Omega$. It follows that the volume of $(M \cap \hat \Omega) \cup \Omega^{res}$ is strictly larger than that of the isoperimetric region $\Omega \cup \Omega^{res}$ so that by the monotonicity of the isoperimetric profile of $(M, g)$ its boundary area is less. A cut-and-paste argument using that the area of $\hat \Sigma$ is less than that of $\Sigma$ shows otherwise --- a contradiction.  
\end{proof}
\end {lemma}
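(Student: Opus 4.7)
My plan is to verify the three claims in turn: connectedness of $\Sigma$, the outward area-minimizing property of $\Sigma$ in $(\hat M,\hat g)$, and the Hawking mass inequality.

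Connectedness of $\Sigma$ is immediate from Lemma \ref{lem:decomposition}: in alternative (a), $\partial\Omega_V\setminus\partial M$ is stated to be connected, and in alternative (b), $\Sigma$ equals the connected boundary of the unique large component $\Omega_V^\infty$.

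For the outward area-minimizing property, I argue by contradiction. Let $\hat\Omega\subset\hat M$ be the least area enclosure of the region in $\hat M$ naturally associated to $\Omega$ --- i.e., $\Omega\cup(\hat M\setminus M)$ in alternative (a) and $\Omega$ in alternative (b) --- so that its boundary $\hat\Sigma$ is $C^{1,1}$ with $\area(\hat\Sigma)\leq\area(\Sigma)$ by \cite[Theorem~1.3]{Huisken-Ilmanen:2001}. Assuming $\Sigma$ is not outward area-minimizing, $\hat\Omega$ strictly contains the natural region and in fact $\area(\hat\Sigma)<\area(\Sigma)$, since otherwise $\Sigma$ would itself be a least area enclosure. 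I then form the competitor
\[
\tilde\Omega=(\hat\Omega\cap M)\cup\Omega^{res}\subset M,
\]
where $\Omega^{res}$ denotes the residual component of $\Omega_V$ from alternative (b) of Lemma \ref{lem:decomposition} (empty in alternative (a)). A cut-and-paste accounting --- handling the portions of $\hat\Sigma$ that enter the filled-in region $\hat M\setminus M$ by replacing them with subsets of the minimal horizon $\partial M$ at no area increase --- yields $\tilde\Omega\in\mathcal R_{V'}$ with $V'=\vol(\tilde\Omega)>V$ and
\[
\area(\partial\tilde\Omega)-\area(\partial M)\leq\area(\hat\Sigma)+\area(\partial\Omega^{res}\setminus\partial M)<A(V).
\]
The monotonicity of the isoperimetric profile for large volumes --- a consequence of the leading-order asymptotic $A(V)=(36\pi)^{1/3}V^{2/3}+o(V^{2/3})$ recorded in Appendix \ref{sec:sharpisoperimetric} --- then gives $A(V')\geq A(V)$, which is incompatible with $A(V')\leq\area(\partial\tilde\Omega)-\area(\partial M)<A(V)$.

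Once $\Sigma$ is known to be outward area-minimizing in $(\hat M,\hat g)$, the Hawking mass inequality is an immediate application of Lemma \ref{lem:monotonicity}. I expect the most delicate part of the argument to be the cut-and-paste bookkeeping around the horizon, specifically the justification that portions of $\hat\Sigma$ crossing into the filled-in balls can be swapped for subsets of $\partial M$ without net area increase. This rests on the standard feature of the Huisken-Ilmanen setup \cite[Section~6]{Huisken-Ilmanen:2001} that $\partial M$ consists of outward area-minimizing spheres around the filled-in balls, being itself a collection of minimal surfaces in $\hat M$.
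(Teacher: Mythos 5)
Your argument follows essentially the same route as the paper: connectedness from Lemma \ref{lem:decomposition}, then the least area enclosure $\hat\Omega$ of \cite[Theorem 1.3]{Huisken-Ilmanen:2001}, a cut-and-paste across the minimal horizon to produce a competitor in $\mathcal R_{V'}$ with $V'>V$ and strictly smaller boundary area, a contradiction with the monotonicity of the isoperimetric profile, and finally Lemma \ref{lem:monotonicity} for the Hawking mass bound. Your extra care with the strictness of $\area(\hat\Sigma)<\area(\Sigma)$ and with the portions of $\hat\Sigma$ entering the filled-in balls is welcome and consistent with what the paper leaves implicit.

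One point needs repair: you justify the monotonicity of the profile by the asymptotic $A(V)=(36\pi)^{1/3}V^{2/3}+o(V^{2/3})$. This does not suffice, because the volume increment $V'-V=\vol(\hat\Omega\cap M)-\vol(\Omega)$ produced by the least area enclosure may be arbitrarily small, and the $o(V^{2/3})$ error term can then swamp the gain in the leading term, so $A(V')\geq A(V)$ cannot be deduced this way. The correct justification is the general fact, recorded in the paper's Appendix B, that the isoperimetric profile of an asymptotically flat $3$-manifold is strictly increasing (the boundary of an isoperimetric region has positive constant mean curvature, and $A'{}^+(V)\leq H$; alternatively, one adds a small far-away ball to any region to increase volume at the cost of arbitrarily little area, which shows $A$ is non-decreasing --- all that your contradiction actually requires). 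With that substitution the proof is complete and matches the paper's.
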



\section{Area and volume of large, outlying coordinate spheres} \label{sec:outlyingspheres}

The computations in this section follow closely the ideas leading to Corollary 2.3 (stated here as Lemma \ref{lem:FSTM}) in \cite{Fan-Shi-Tam:2009}. \\

Let $(M, g)$ be a complete Riemannian $3$-manifold that is asymptotically flat at rate $\tau = 1$. \\

We abbreviate  
\[
n^i (x) = \frac{x^i - a^i}{|x - a|}
\]
throughout. Unless we indicate otherwise, integration is with respect to the Euclidean background metric in the chart at infinity \eqref{eqn:chartatinfinity}. 

\begin{lemma}\label{0m} 
Let $\rho > 0$ and $a \in \R^3$ with $|a| - \rho > 1$. Consider a Euclidean coordinate sphere $S_\rho (a) = \{ x \in \R^3 : |x - a| = \rho\}$. Then
\begin{align} \label{adm}
 \sum_{i, j = 1}^3 \int_{S_{\rho}(a)} \left( \partial_i \sigma_{ij} - \partial_j \sigma_{ii} \right)n_j = O \left( \frac{1}{|a| - \rho}\right).
\end{align}
\end{lemma}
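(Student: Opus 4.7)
The plan is to convert the surface integral on $S_\rho(a)$ into a volume integral over the Euclidean ball $B_\rho(a)$ via the divergence theorem, and then to exploit the inclusion $B_\rho(a) \subset \{x : |x| > |a|-\rho\}$ (a consequence of the reverse triangle inequality) to estimate the resulting integrand against the expected tail rate. The hypothesis $|a|-\rho > 1$ guarantees that $\overline{B_\rho(a)}$ sits inside the chart \eqref{eqn:chartatinfinity}, so the Euclidean divergence theorem is applicable. Writing $n^j = (x^j - a^j)/|x-a|$ for the Euclidean outward unit normal and applying the identity $\int_{S_\rho(a)} f \, n^j \, d\mu = \int_{B_\rho(a)} \partial_j f \, dx$ term-by-term rewrites the left-hand side of \eqref{adm} as
\[
\int_{B_\rho(a)} \left( \sum_{i,j=1}^{3} \partial_i \partial_j \sigma_{ij} \; - \; \Delta(\tr \sigma) \right) dx.
\]
The integrand is recognised as the linearisation at the Euclidean metric of the scalar curvature operator, and hence equals $R(g) - Q$, where $Q = Q(\sigma, \partial \sigma, \partial^2 \sigma)$ collects the terms in the expansion of $R(g)$ around $\sigma = 0$ that are at least quadratic in $\sigma$ and its derivatives.

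With $\tau = 1$ the decay $|\partial^\alpha \sigma| = O(|x|^{-1-|\alpha|})$ for $|\alpha| \le 2$ yields $|Q(x)| = O(|x|^{-4})$. Combined with the inclusion above and the elementary identity $\int_{\{|x| > R\}} |x|^{-4} \, dx = 4 \pi / R$, this gives
\[
\left| \int_{B_\rho(a)} Q \, dx \right| \; \le \; C \int_{B_\rho(a)} |x|^{-4} \, dx \; \le \; \frac{4 \pi C}{|a| - \rho},
\]
which is exactly the rate asserted by the lemma. The contribution $\int_{B_\rho(a)} R(g) \, dx$ is bounded at the same rate by the same inclusion together with the pointwise decay of $R(g)$ supplied by the asymptotic-flatness hypotheses operative in this section.

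The main obstacle is obtaining the sharp quantitative rate on the scalar-curvature piece. The quadratic remainder $Q$ has the required $|x|^{-4}$ decay essentially for free, but one only obtains the pointwise estimate $|R(g)| = O(|x|^{-3})$ from $R(g) = R_{\mathrm{lin}}(\sigma) + Q$ together with $|\partial^2 \sigma| = O(|x|^{-3})$, which is borderline non-integrable at infinity in $\R^3$. Sharpening $\int_{B_\rho(a)} R(g) \, dx$ to the stated rate $1/(|a|-\rho)$ therefore requires going beyond the bare rate-$1$ decay --- either through finer pointwise decay of $R(g)$, or through a refinement in the spirit of Fan--Shi--Tam \cite{Fan-Shi-Tam:2009} that exploits the divergence-free character of the linearised Einstein tensor $\sigma_{ij} - \delta_{ij} \, \tr \sigma$, which is the one-form potential for the integrand of \eqref{adm}.
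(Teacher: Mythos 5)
Your argument is the same as the paper's: apply the Euclidean divergence theorem on $B_\rho(a)$, identify the resulting integrand with the linearisation of the scalar curvature, absorb the quadratic remainder $Q=O(|x|^{-4})$ using $B_\rho(a)\subset\{|x|\ge |a|-\rho\}$, and reduce the claim to controlling $\int_{B_\rho(a)}R$. The obstacle you flag at the end is real but is resolved more cheaply than you suggest: the definition of asymptotic flatness in this paper includes the standing hypothesis that the scalar curvature is \emph{integrable}, so $\bigl|\int_{B_\rho(a)}R\bigr|\le\int_{\{|x|\ge |a|-\rho\}}|R|\to 0$ as $|a|-\rho\to\infty$. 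This gives only $o(1)$ rather than the literal rate $O\bigl(1/(|a|-\rho)\bigr)$ for that term, but $o(1)$ is exactly what is used downstream (the lemma enters the proof of Proposition \ref{prop:av01} only through the $o(1)$ term in \eqref{aa2}); the paper's own proof likewise stops at bounding the $O(|x|^{-4})$ error and does not claim the sharp rate for the $\int R$ contribution. So no appeal to finer pointwise decay of $R$ or to a divergence-free structure of the linearised Einstein tensor is needed --- you only need to invoke the integrability of $R$ and note that the conclusion should be read (and is only ever used) as a smallness statement in the regime $|a|-\rho\to\infty$.
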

\begin{proof}
The decay assumptions for the metric in the chart at infinity imply that 
\[
R  = \sum_{i,j = 1}^3 (\partial_i \partial_j g_{ij} - \partial_j \partial_j g_{ii}) +O(|x|^{-4}).
\]
Then
\begin{align*}
\int_{B_{\rho} (a)} R = & \sum_{i,j = 1}^3  \int_{B_{\rho} (a)}(\partial_i \partial_j g_{ij} - \partial_j \partial_j g_{ii})+O \int_{B_{\rho} (a)} \frac{1}{|x|^4}\\
= & \sum_{i,j = 1}^3 \int_{S_{\rho}(a)}(\partial_i \sigma_{ij} - \partial_j \sigma_{ii}) n_j + O \int_{B_{\rho} (a)} \frac{1}{|x|^4}
\end{align*}
where integration is with respect to the Euclidean background metric. Now
\[
 \int_{B_{\rho} (a)}\frac{1}{|x|^4} = O \int_{\{x \in \R^3 : |x| \geq |a| - \rho\} } \frac{1}{|x|^4} = O \left( \frac{1}{|a| - \rho}\right).
\]
\end{proof}

\begin{proposition}\label{prop:av01}
We have that 
\begin{align}\label{aat}
\area(S_\rho (a)) &=4\pi \rho^2+\frac{1}{2}\int_{S_{\rho}(a)}
(\delta^{ij} - n^i n^j ) \sigma_{ij}+o(\rho) \\
\label{vat}
\vol(B_\rho(a)) &=\frac{4\pi \rho^3}{3}+\frac{\rho}{4}\int_{S_{\rho}(a)}
(\delta^{ij} - n^i n^j )  \sigma_{ij}+o(\rho^2)
\end{align}
as $\rho \to \infty$ and $|a| - \rho \to \infty$.
\begin{proof} 
Let $t\in[1,\rho]$. Note that 
\begin{align}\label{sal}
\area(S_t(a))=4\pi t^2+\frac{1}{2}\int_{S_{t}(a)}
(\delta^{ij} - n^i n^j ) \sigma_{ij}+ O \int_{S_{t}(a)} \frac{1}{|x|^2}.
\end{align}
Indeed, the area element with respect to the induced metric is given by
 \[
 d\mu= \left(1+ \frac{1}{2} (\delta^{ij} - n^i n^j ) \sigma_{ij}+O(|x|^{-2}) \right) d \overline \mu.
 \]
Now
\begin{align} \label{sas}
\int_{S_{t}(a)}  \frac{1}{|x|^2}
=&\int_{0}^{2\pi}\int_{0}^{\pi}\frac{t^2\sin\phi}{|a|^2+t^2-2|a|t\cos\phi}d\phi d\theta =\frac{\pi t}{|a|}\log \left ( \frac{|a| + t}{|a| - t} \right) =o(t),
\end{align}
giving \eqref{aat}.
Differentiating \eqref{sal}, we obtain that 
\begin{align*}
\partial_t \area (S_t(a))
=&8\pi t+
\frac{1}{2}\int_{S_{t}(a)} n_k \partial_k( (\delta^{ij} - n^i n^j) \sigma_{ij})+
\frac{1}{t}\int_{S_{t}(a)} (\delta^{ij} - n^i n^j)\\
&+ O \int_{S_{t}(a)} |x|^{-3}
+\frac{1}{t} \, O \int_{S_{t}(a)} |x|^{-2}.
\end{align*}
Using that 
\[
n_k \partial_k n^i = 0
\]
for all $i = 1, 2, 3$, we obtain that 
\begin{align*}
\partial_t \area(S_t(a))
=&8\pi t+
\frac{1}{2}\int_{S_{t}(a)} n^k (\delta^{ij} - n^i n^j) \partial_k \sigma_{ij}+
\frac{1}{t}\int_{S_t(a)}(\delta^{ij} - n^i n^j)\sigma_{ij}\\
&+ O \int_{S_{t}(a)} \frac{1}{|x|^3}
+\frac{1}{t} \, O \int_{S_{t}(a)} \frac{1}{|x|^2}.
\end{align*}
Observe that
\begin{align*}
\int_{S_{t}(a)}  n^i n^j n^k \partial_k \sigma_{ij} &= \int_{S_{t}(a)}n^i n^k \partial_k (n^j \sigma_{ij})\\
=&-\int_{S_{t}(a)}(\delta^{ik}-n^i n^k)\partial_k (n^j \sigma_{ij}) + \int_{S_{t}(a)} \delta^{ik} \partial_k ( n^j \sigma_{ij})\\
=&-\frac{2}{t}\int_{S_{t}(a)} n^i n^j \sigma_{ij} +\int_{S_{t}(a)} \delta^{ik} n^j \partial_k \sigma_{ij}  + \frac{1}{t} \int_{S_{t}(a)}(\delta^{ij} - n^i n^j) \sigma_{ij}\\
=&-\frac{2}{t}\int_{S_{t}(a)} n^i n^j \sigma_{ij} +\int_{S_{t}(a)} \delta^{ik} n^j \partial_k \sigma_{ij}  +\frac{1}{t} \int_{S_{t}(a)}(\delta^{ij} - n^i n^j) \sigma_{ij}
\end{align*}
where we have used the first variation formula in the second equality. 
The last two equalities combine to give that
\begin{align}
\partial_t \area(S_t(a))
=&8\pi t+
\frac{1}{2}\int_{S_{t}(a)}  \delta^{ik} n^j(\partial_j \sigma_{ik} - \partial_i \sigma_{kj} ) 
+ \frac{1}{t} \int_{S_{t}(a)}n^i n^j\sigma_{ij}\label{paa}\\
&+\frac{1}{2t}\int_{S_{t}(a)} (\delta^{ij} - n^i n^j)\sigma_{ij}
+ \frac{1}{t} \, O\int_{S_{t}(a)} \frac{1}{|x|^2}
+ O \int_{S_{t}(a)}\frac{1}{|x|^3}\nonumber.
\end{align}
Substituting \eqref{sal} into \eqref{paa} and applying  Lemma \ref{0m} gives that
\begin{align}
\partial_t \area (S_t(a))
=&\frac{\area (S_t(a))}{t}+4\pi t
+ \frac{1}{t} \int_{S_{t}(a)} n^i n^j \sigma_{ij}
+ O \int_{S_{t}(a)}\frac{1}{|x|^3} \label{aa2}\\
&+ \frac{1}{t}\, O \int_{S_{t}(a)} \frac{1}{|x|^2}+o(1).\nonumber
\end{align}
Next we give an estimate of $\vol (B_t(a))$.
By the co-area formula,
\begin{align}
\partial_t \vol (B_t(a))
=&\int_{S_{t}(a)} \frac{t \, d \mu }{ \sqrt {g_{ij} (x^i -a^i) (x^j - a^j) }}\label{va}\\
=&\area (S_t(a))+\frac{1}{2}\int_{S_{t}(a)}n^i n^j \sigma_{ij}
+O \int_{S_{t}(a)} \frac{1}{|x|^2}\nonumber
\end{align}
which in conjunction with \eqref{aa2} yields
\begin{align}
\partial_t \area (S_t(a))
=&\frac{\area (S_t(a))}{t}+4\pi t+
\frac{1}{t}\Big(2 \partial_t \vol (B_t(a))-2\area (S_t(a))\Big) \label{aa3}\\
&+ O \int_{S_{t}(a)} \frac{1}{|x|^3}
+\frac{1}{t} \, O \int_{S_{t}(a)} \frac{1}{|x|^2} + o(1).\nonumber
\end{align}
It follows that
\begin{align*}
\partial_t (t \area (S_t(a)))=4\pi t^2
+2\partial_t\vol (B_t(a))
+t \,  O \int_{S_{t}(a)}\frac{1}{|x|^3} + O \int_{S_{t}(a)}\frac{1}{|x|^2} + o(t).
\end{align*}
Integrating from $1$ to $\rho$ yields
\begin{align} \label{av1}
\rho \area (S_\rho(a))=\frac{4\pi \rho^3}{3}+2\vol (B_\rho(a))+O \int_1^{\rho}\int_{S_{t}(a)} \frac{t}{|x|^3}dt 
+O \int_1^{\rho}\int_{S_{t}(a)} \frac{1}{|x|^2} dt+o(\rho^2).
\end{align}
A direct computation shows
\[
\int_{S_{t}(a)} \frac{1}{|x|^3}
=\frac{2 \pi t}{|a|}\left( \frac{1}{|a| - t}- \frac{1}{|a| + t}\right)
\]
so that
\begin{align}
\int_1^{\rho}\int_{S_{t}(a)} \frac{t}{|x|^3} dt
=o(\rho^2).\label{error1}
\end{align}
Similarly, 
\begin{align}\label{error2}
 \int_1^{\rho}\int_{S_{t}(a)} \frac{1}{|x|^2} dt
=o(\rho^2).
\end{align}
Substituting \eqref{error1} and \eqref{error2} into \eqref{av1} gives \eqref{vat}.
\end{proof} 
\end{proposition}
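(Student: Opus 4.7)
The plan is to expand the induced area element on the perturbed coordinate sphere $S_t(a)$ to first order in $\sigma$, differentiate in the radius $t$, and close a differential identity for $\area(S_t(a))$ using Lemma \ref{0m} and the co-area formula. Integration then yields both \eqref{aat} and \eqref{vat} in parallel.

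First I would record that, since $g_{ij} = \delta_{ij} + \sigma_{ij}$ with $\sigma = O(|x|^{-1})$, the induced area element on the Euclidean sphere $S_t(a)$ has the expansion
\[
d\mu = \Bigl( 1 + \tfrac{1}{2}(\delta^{ij} - n^i n^j)\sigma_{ij} + O(|x|^{-2}) \Bigr) \, d\bar\mu,
\]
where $d\bar\mu$ is the Euclidean area element. Integrating gives a preliminary identity for $\area(S_t(a))$ whose error term $\int_{S_t(a)}|x|^{-2}$ can be computed explicitly in spherical coordinates centered at $a$ to give $\tfrac{\pi t}{|a|}\log \tfrac{|a|+t}{|a|-t}$, which is $o(t)$ under the hypothesis $|a|-t\to\infty$. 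This already yields \eqref{aat}.

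The harder step is \eqref{vat}. I would differentiate the preliminary identity in $t$; since $n^k\partial_k n^i = 0$, differentiation acts only on $\sigma$ and on the area element. The resulting boundary integral contains the term $\int n^i n^j n^k \partial_k \sigma_{ij}$, which is the obstacle: I would process it by splitting $n^k\partial_k = \delta^{ik}\partial_k - (\delta^{ik}-n^i n^k)\partial_k$ and integrating the tangential part by parts on $S_t(a)$ via the first variation formula. After regrouping, the unwanted bulk derivatives collapse into the antisymmetric combination $\delta^{ik}n^j(\partial_j\sigma_{ik} - \partial_i\sigma_{kj})$, which by Lemma \ref{0m} contributes only $O((|a|-t)^{-1}) = o(1)$. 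This leaves a first-order ODE of the form
\[
\partial_t \area(S_t(a)) = \frac{\area(S_t(a))}{t} + 4\pi t + \frac{1}{t}\int_{S_t(a)} n^i n^j \sigma_{ij} + (\text{small errors}).
\]

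To remove the remaining $\int n^i n^j \sigma_{ij}$ factor, I would invoke the co-area formula for the perturbed metric, which gives $\partial_t \vol(B_t(a)) = \area(S_t(a)) + \tfrac{1}{2}\int_{S_t(a)} n^i n^j\sigma_{ij} + O\!\int_{S_t(a)}|x|^{-2}$. Substitution produces a divergence-form identity $\partial_t(t\,\area(S_t(a))) = 4\pi t^2 + 2\,\partial_t \vol(B_t(a)) + (\text{error})$, which I would integrate from $1$ to $\rho$ and then combine with \eqref{aat} to solve for $\vol(B_\rho(a))$. The accumulated error integrals of the type $\int_1^\rho \int_{S_t(a)} |x|^{-3} \, t \, dt$ and $\int_1^\rho \int_{S_t(a)} |x|^{-2} \, dt$ can be evaluated in closed form in spherical coordinates (the surface integrals reduce to rational functions of $|a|\pm t$) and shown to be $o(\rho^2)$ under $|a|-\rho\to\infty$. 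The main obstacle is bookkeeping the three distinct error scales --- the $O(|x|^{-2})$ term on each surface, the $O(t|x|^{-3})$ term arising from differentiation, and the $O((|a|-t)^{-1})$ term coming from Lemma \ref{0m} --- and verifying that each is indeed absorbed into the stated $o(\rho^2)$ bound \emph{uniformly} under $|a|-\rho \to \infty$, which is a genuinely weaker hypothesis than $|a|-\rho \geq c \rho$.
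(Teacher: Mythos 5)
Your proposal is correct and follows essentially the same route as the paper: the first-order expansion of the area element, the tangential integration by parts that isolates the antisymmetric combination controlled by Lemma \ref{0m}, the co-area identity for $\partial_t \vol(B_t(a))$, and the integration of the resulting divergence-form ODE from $1$ to $\rho$ all match the paper's argument step for step. The explicit closed-form evaluation of the error integrals in spherical coordinates centered at $a$, which you correctly identify as the mechanism ensuring uniformity under the weaker hypothesis $|a|-\rho\to\infty$, is exactly how the paper absorbs the three error scales into $o(\rho^2)$.
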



\section{Isoperimetric deficit of large outlying isoperimetric spheres} \label{sec:isospheres}

Throughout this section, we consider a complete Riemannian $3$-manifold $(M, g)$ that is asymptotically flat at rate $\tau = 1$ and which has non-negative scalar curvature and positive mass $m_{ADM} > 0$. We mention that the results in this section work for $\tau > 3/4$. (This is the threshold for the proof of the key estimate \eqref{bthe}.) As a step in the proof of Theorem \ref{thm:main}, the argument here needs the full strength of the results by S. Ma in \cite{Ma:2016}, which require that $\tau = 1$. This is why we restrict the exposition to this case.  \\

We consider isoperimetric regions $\Omega_V = \Omega^{res} \cup \Omega$ where $\Omega^{res}$ is contained in a small neighbourhood of the horizon and  $\Omega \cap B_{\rho_0} = \emptyset$ for some $\rho_0 > 1$. We assume throughout that the volume $V > 0$ is large. We know from Section \ref{sec:divergentisos} that the boundary $\Sigma = \partial \Omega$ is connected and outward area-minimizing in the sense of Lemma \ref{lem:monotonicity}. We may assume that $\rho_ 0 > 1$ is large. All the error terms in this section are with respect to volume $V \to \infty$. \\

We  assume throughout this section that $\Sigma$ has the topology of a sphere. \\

Let $r > 0$ denote the area radius
\[
\area (\Sigma) = 4 \pi r^2
\]
of $\Sigma$. We use $H > 0$ to denote the mean curvature of $\Sigma$. 

Using \eqref{CYspheres} and Lemma \ref{lem:HIiso}, we find 
\begin{align}
r \int_\Sigma |\mathring h|^2  d \mu \leq 4 8 \pi m_{ADM} \label{eqn:CYHI} 
\end{align}
and 
\begin {align*}
2 \sqrt{1 - 2 m_{ADM}/r } \leq r H  \leq 2. 
\end {align*}
From this, we see 
\begin{align} \label{eqn:mcarea}
|H - 2/r| = O (r^{-2});
\end{align}
cf. \cite[p.\ 425]{Chodosh:large-iso}. 
In conjunction with the results stated in Appendix \ref{sec:gbarg}, with $\tau = 1$, we obtain  
\begin{align}\label{a0l2}
\int_{\Sigma}|\mathring{\overline{h}}|_{\overline{g}}^2d\overline{\mu}
= O \left(\int_{\Sigma}|\mathring{h}|_{g}^2d\mu+
\int_{\Sigma} |h|^2_g |x|^{-2}d\mu+\int_{\Sigma}|x|^{-4}d\mu\right) \\ \nonumber
= O \left(r^{-1} + \rho_0^{-2} \right).
\end{align}

We next recall a consequence of J. Simons' identity for the trace-free part of the second fundamental form.

\begin{lemma} [Cf. Corollary 5.3 in \cite{hdiso}]
There is a constant $c>0$ with the following property. Consider in a Riemannian manifold a two-sided hypersurface with constant mean curvature $H$ and trace-free second fundamental form $\mathring{h}$. Then 
\begin{align} \label{eqn:weakSimons}
2 |\mathring h|^3 + \Delta|\mathring{h}| \geq - c ( H |\mathring{h}|^2 + H |{Rm}| + |\mathring{h}| |Rm| + |\nabla Rm|)
\end{align}
holds weakly, where $\Delta$ is the induced Laplace-Beltrami operator and where $\Rm, \nabla \Rm$ are the ambient Riemannian curvature tensor and its first covariant derivative both restricted along  the surface. 
\end{lemma}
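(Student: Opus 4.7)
The plan is to derive the inequality from Simons' identity for the second fundamental form, followed by Kato's inequality for symmetric tensors and a standard $\epsilon$-regularisation to make the estimate valid weakly across the zero set of $|\mathring{h}|$. Explicitly, I would first invoke Simons' identity for a two-sided CMC hypersurface $\Sigma$ in a general Riemannian manifold. Since $H$ is constant, the $\nabla H$ terms drop out and Simons' identity takes the schematic form
\[
\Delta h_{ij} = H\, h_{ik} h^{k}{}_{j} - |h|^{2} h_{ij} + E_{ij},
\]
where $E_{ij}$ collects the ambient-curvature contributions and their first covariant derivative, and satisfies $|E| \leq c\bigl(|H|\,|\Rm| + |h|\,|\Rm| + |\nabla \Rm|\bigr)$ pointwise for a dimensional constant $c$.

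Next I would contract with the trace-free tensor $\mathring{h}^{ij}$. Writing $h = \mathring{h} + (H/n)\,g$ with $n = \dim \Sigma$ and using $\mathring{h}^{ij} g_{ij} = 0$, elementary algebra gives $\mathring{h}^{ij} h_{ij} = |\mathring{h}|^{2}$ and $\mathring{h}^{ij} h_{ik} h^{k}{}_{j} = \tr(\mathring{h}^{3}) + (2H/n)|\mathring{h}|^{2}$. Discarding the non-negative $(H^{2}/n)|\mathring{h}|^{2}$ term and estimating $|\tr(\mathring{h}^{3})| \leq |\mathring{h}|^{3}$ together with $|h| \leq |\mathring{h}| + c|H|$, one is left with the pointwise bound
\[
\mathring{h}^{ij} \Delta \mathring{h}_{ij} \geq -|\mathring{h}|^{4} - |H|\,|\mathring{h}|^{3} - c\bigl(|\mathring{h}|^{2}\,|\Rm| + |H|\,|\mathring{h}|\,|\Rm| + |\mathring{h}|\,|\nabla \Rm|\bigr).
\]

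The third step is to pass to a scalar inequality for $|\mathring{h}|$. Combining Kato's inequality $|\nabla \mathring{h}|^{2} \geq |\nabla|\mathring{h}||^{2}$ with $\tfrac{1}{2}\Delta|\mathring{h}|^{2} = \mathring{h}^{ij}\Delta\mathring{h}_{ij} + |\nabla\mathring{h}|^{2} = |\mathring{h}|\Delta|\mathring{h}| + |\nabla|\mathring{h}||^{2}$ gives $|\mathring{h}|\Delta|\mathring{h}| \geq \mathring{h}^{ij}\Delta\mathring{h}_{ij}$ wherever $|\mathring{h}|>0$. Dividing by $|\mathring{h}|$ and inserting the previous bound yields
\[
\Delta|\mathring{h}| \geq -|\mathring{h}|^{3} - |H|\,|\mathring{h}|^{2} - c\bigl(|\mathring{h}|\,|\Rm| + |H|\,|\Rm| + |\nabla \Rm|\bigr),
\]
which is strictly stronger than the asserted estimate; the factor $2$ in front of $|\mathring{h}|^{3}$ is slack.

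The only non-trivial obstacle I anticipate is that $|\mathring{h}|$ need not be smooth at its zero set, so the pointwise argument above is not directly valid there. The standard remedy is to run the entire derivation with the smooth regularisation $f_{\epsilon} = \sqrt{|\mathring{h}|^{2} + \epsilon^{2}}$ in place of $|\mathring{h}|$, test the resulting inequality against a non-negative smooth compactly supported function, and send $\epsilon \downarrow 0$ by dominated convergence to recover the inequality in the weak sense. Everything else is pointwise algebra from Simons' identity.
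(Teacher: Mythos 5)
Your proof is correct and follows the standard route — Simons' identity for a CMC hypersurface, contraction with $\mathring h$, Kato's inequality, and the $\sqrt{|\mathring h|^2+\epsilon^2}$ regularisation to obtain the weak formulation across the zero set — which is precisely the argument behind Corollary 5.3 of \cite{hdiso}, the reference the paper cites in lieu of a proof. Your observation that the factor $2$ in front of $|\mathring h|^3$ is slack is also accurate.
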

 
\begin{proposition} \label{prop:moser}
There is a constant $c> 0$ depending only on $(M, g)$ such that 
\begin{align}\label{A0}
 |\mathring{h}(x)| \leq c \,  r^{-5/4}
\end{align}
for all $x \in \Sigma$ such that $2 |x| \geq r^{3/4}$.

\begin {proof} 
Assume that the assertion fails with $c = k$  along a sequence of regions $\Omega_k$ with area radius $r_k \to \infty$ and at points $x_k \in \Sigma_k = \partial \Omega_k$ where $r_k^{3/4} \leq 2 |x_k|$. We work in the chart at infinity $\{x  \in \R^3 : |x| > 1/2\}$. If we rescale by $r_k^{-3/4}$ and pass to a subsequence, then the rescaled regions converge in $C^{2, \alpha}_{loc}$ to a half-space in $\R^3 \setminus\{0\}$. Upon further translation by the points $r_k^{-3/4} x_k$ we find surfaces $\tilde \Sigma_k$ in $B_{1/4}(0)$ with $0 \in \tilde \Sigma_k$ that are locally isoperimetric with respect to a metric $\tilde g_k$ on $B_{1/4}(0)$ and such that
\begin{align} \label{eqn:auxcurvature}
r_k |\mathring {\tilde h}_k(0)|^2 \geq k^2 \qquad \text{ and } \qquad r_k \int_{\tilde \Sigma_k} |\mathring {\tilde h}_k|^2 d \tilde \mu_k \leq 48 \pi m_{ADM}.
\end{align}
(The second estimate follows from \eqref{eqn:CYHI}, inclusion, and scaling invariance.) The surfaces $\tilde \Sigma_k $ converges in $C^{2, \alpha}$ to a plane through the origin in $B_{1/4} (0)$. The Riemannian metrics $\tilde g_k$ converge to the Euclidean metric on $B_{1/4}(0)$ with 
\[
\tilde \Rm_k  = O (r_k^{-3/4}) \qquad \text{ and } \qquad \tilde \nabla_k \tilde \Rm_k = O (r_k^{-3/4}).
\]
For large $k$, \eqref{eqn:weakSimons} and \eqref{eqn:auxcurvature} are incompatible with the estimate in Theorem 8.15 in \cite{Gilbarg-Trudinger:1998}.
\end {proof}
\end{proposition}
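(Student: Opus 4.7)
I would argue by contradiction, blowing up at the scale $\lambda_k := r_k^{-3/4}$. Suppose the estimate fails: there exist isoperimetric regions $\Omega_k$ with outer area radius $r_k \to \infty$ and boundary points $x_k \in \Sigma_k$ with $2|x_k| \geq r_k^{3/4}$ but $|\mathring{h}_k(x_k)| \geq k \, r_k^{-5/4}$, where $k \to \infty$. I would dilate the chart at infinity by $\lambda_k$, so that $x_k$ maps to $\tilde x_k := \lambda_k x_k$ with $|\tilde x_k| \geq 1/2$, and then translate $\tilde x_k$ to the origin. On the fixed Euclidean ball $B_{1/4}(0)$ the rescaled metrics $\tilde g_k$ then converge smoothly to the Euclidean metric, with $\tilde\Rm_k$ and $\tilde\nabla_k \tilde\Rm_k = O(r_k^{-3/4})$, since $B_{1/4}(0)$ stays at Euclidean distance at least $1/4$ from the singularity of the chart. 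The rescaled surfaces $\tilde\Sigma_k$ are local volume-constrained area-minimizers for $\tilde g_k$ with constant mean curvature $\tilde H_k = O(r_k^{-1/4}) \to 0$ by \eqref{eqn:mcarea}, and since $|\mathring h|^2\, d\mu$ is scale-invariant in dimension two, \eqref{eqn:CYHI} gives
\[
r_k \int_{\tilde\Sigma_k} |\mathring{\tilde h}_k|^2 \, d\tilde\mu_k \leq 48\pi\, m_{ADM}, \qquad r_k\, |\mathring{\tilde h}_k(0)|^2 \geq k^2.
\]

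Next, I would invoke compactness for stable constant mean curvature surfaces with uniformly bounded mean curvature and $L^2$-small second fundamental form, combined with Allard's regularity theorem and Schauder estimates for the prescribed mean curvature equation, to conclude that, after passing to a subsequence, $\tilde\Sigma_k$ converges in $C^{2,\alpha}$ on $B_{1/4}(0)$ to a smooth stable minimal surface in $\R^3$. Because $\|\mathring{\tilde h}_k\|_{L^2} \to 0$, this limit has vanishing trace-free second fundamental form and vanishing mean curvature, so it is a plane through the origin. In particular $\sup_{\tilde\Sigma_k \cap B_{1/4}(0)} |\mathring{\tilde h}_k| \to 0$.

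Finally, I would apply Moser iteration. Setting $u_k := |\mathring{\tilde h}_k|$, the weak Simons inequality \eqref{eqn:weakSimons} on $\tilde\Sigma_k$ takes the form
\[
- \tilde\Delta_k u_k \leq \bigl( 2 u_k^2 + c \tilde H_k u_k + c |\tilde\Rm_k| \bigr) u_k + c\bigl( \tilde H_k |\tilde\Rm_k| + |\tilde\nabla_k \tilde\Rm_k|\bigr).
\]
By the previous step the coefficient in parentheses is uniformly bounded on $\tilde\Sigma_k \cap B_{1/4}(0)$, and the forcing term is $O(r_k^{-3/4})$. Parametrizing $\tilde\Sigma_k \cap B_{1/4}(0)$ as a graph over the limiting plane converts $\tilde\Delta_k$ into a uniformly elliptic operator on a Euclidean disk with controlled coefficients, and Theorem 8.15 in \cite{Gilbarg-Trudinger:1998} then yields
\[
\sup_{\tilde\Sigma_k \cap B_{1/8}(0)} u_k \leq C \, \|u_k\|_{L^2(\tilde\Sigma_k \cap B_{1/4}(0))} + C \, r_k^{-3/4} = O(r_k^{-1/2}),
\]
contradicting $u_k(0) \geq k\, r_k^{-1/2}$ once $k$ is sufficiently large.

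The main obstacle in executing this plan is handling the superlinear term $u_k^3$ in the Simons inequality: Moser's $L^2$-to-$L^\infty$ estimate can be applied only once $u_k^2$ has been promoted from an $L^1$ quantity to a uniformly bounded $L^\infty$ coefficient, which is precisely what the preliminary $C^{2,\alpha}$ convergence to the flat plane supplies. The blow-up exponent $3/4$ is chosen to balance two competing requirements: it is fast enough to drive $\tilde H_k$ and the ambient curvatures to zero so that Simons linearizes in the limit, and yet slow enough that the assumed pointwise violation $r_k^{5/4}|\mathring h_k(x_k)| \to \infty$ survives rescaling as $r_k |\mathring{\tilde h}_k(0)|^2 \to \infty$.
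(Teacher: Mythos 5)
Your proposal is correct and follows essentially the same route as the paper: blow up at scale $r_k^{-3/4}$, translate the bad point to the origin, record the scale-invariant $L^2$ bound from \eqref{eqn:CYHI} and the pointwise lower bound $r_k|\mathring{\tilde h}_k(0)|^2 \geq k^2$, use $C^{2,\alpha}$ convergence to a plane to linearize the cubic term in Simons' inequality, and conclude via the local boundedness estimate of Theorem 8.15 in Gilbarg--Trudinger. The only difference is that you spell out more explicitly the role of the $C^{2,\alpha}$ convergence in taming the $u_k^3$ term and the balance behind the exponent $3/4$, both of which the paper leaves implicit.
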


Using \eqref{eqn:mcarea},  \eqref{eqn:euclidH}, \eqref{eqn:euclidring} and Proposition \ref{prop:moser},  we see that 
\begin{align*}
\overline{H}(x) 
=2/r+O(r^{-3/2}) \qquad \text{ and } \qquad 
|\mathring{\overline{h}} (x)|= O (r^{-5/4})
\end{align*}
for all points $x\in\Sigma$ with $2 |x|\geq r^{3/4}$. In particular, the Euclidean principle curvatures  $\overline{\kappa}_i(x)$ of $\Sigma$ satisfy
\begin{align}\label{princur}
\overline{\kappa}_i(x)= 1/r+O(r^{-5/4})
\end{align}
for $2 |x| \geq r^{3/4}$, where $i = 1, 2$. Using the Gauss-Weingarten relations, we conclude that 
\begin{align}\label{dy}
| \overline \nabla (\overline{\nu}-x/r)| = O (r^{-5/4}) 
\end{align}
on $\Sigma \setminus B_{r^{3/4}/2}$. \\

Let $\Sigma'$ be a connected component of $\Sigma \setminus B_{r^{3/4}}$. 

\begin{lemma}\label{position}
There is $a \in \R^3$ with $|a| > r + r^{3/4}$ such that 
\begin{align}\label{eqn:almostsphere}
\left|\overline \nu (x) - \frac{x-a}{r} \right| = O (r^{-1/4}) 
\end{align}
for all $x \in \Sigma'$. 
\end{lemma}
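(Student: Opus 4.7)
The plan is to parlay the pointwise gradient estimate \eqref{dy} into an oscillation bound on $\overline{\nu}(x) - x/r$ across $\Sigma'$ by combining it with an intrinsic diameter bound, and then to read off the candidate $a$ as the common value at any base point. The position bound $|a| > r + r^{3/4}$ is then extracted from the outlying geometry of $\Omega$.

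First, \eqref{dy} supplies $|\overline{\nabla}(\overline{\nu}(x) - x/r)|_{\overline g} = O(r^{-5/4})$ on $\Sigma \setminus B_{r^{3/4}/2} \supset \Sigma'$. For the diameter bound: by Proposition \ref{prop:moser} and \eqref{princur}, the Euclidean second fundamental form of $\Sigma$ is close to $\tfrac{1}{r}\mathrm{Id}$ away from $B_{r^{3/4}/2}$, and $\Sigma$ has area $4\pi r^2(1+o(1))$ with the rescaled surfaces $r^{-1}\Sigma$ converging to a round unit sphere (Section \ref{sec:divergentisos}). Hence the intrinsic Euclidean diameter of $\Sigma$, and therefore of the connected subsurface $\Sigma' \subset \Sigma$, is $O(r)$. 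Fix now any $x_0 \in \Sigma'$ and set $b_0 := x_0 - r\overline{\nu}(x_0)$. Integrating the gradient bound along an $O(r)$-length Lipschitz path in $\Sigma'$ from $x_0$ to an arbitrary $x \in \Sigma'$ yields
\[
\left| \overline{\nu}(x) - \frac{x - b_0}{r}\right| = \bigl|(\overline{\nu}(x) - x/r) - (\overline{\nu}(x_0) - x_0/r)\bigr| = O(r \cdot r^{-5/4}) = O(r^{-1/4}).
\]

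Second, this normal estimate forces $\bigl||x - b_0| - r\bigr| = O(r^{3/4})$ on $\Sigma'$, so $\Sigma'$ lies within $O(r^{3/4})$ Hausdorff distance of the Euclidean sphere $S_r(b_0)$, and $\Omega$ lies within the same order of the ball $B_r(b_0)$ (on the correct side, as $b_0$ is on the inward normal at $x_0$). In the outlying regime treated here, the scaled regions $\tilde{\Omega}_{V_k}$ converge to $B_1(\xi)$ with $|\xi| > 1$, so $\rho_0 = \mathrm{dist}(0, \Omega)$ satisfies $\rho_0/r \to |\xi| - 1 > 0$ and in particular $\rho_0 \gg r^{3/4}$ for $V$ large. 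Combined with $\Omega \cap B_{\rho_0} = \emptyset$, the Hausdorff approximation forces $|b_0| \geq r + \rho_0 - O(r^{3/4}) > r + r^{3/4}$, and one takes $a := b_0$.

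The main obstacle is this last step. The normal estimate pins $a$ down only up to an $O(r^{3/4})$ ambiguity -- any shift of $b_0$ by more than this order would spoil the $O(r^{-1/4})$ bound -- so the position inequality cannot come from purely local information on $\Sigma'$. It relies crucially on the outlying geometry of $\Omega$ and the gap $\rho_0 \gg r^{3/4}$ that it provides; in the limiting case $|\xi| = 1$, where $\rho_0 = o(r)$, the argument would break, consistent with the paper's strategy of handling that regime by a different route.
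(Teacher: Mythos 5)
Your derivation of \eqref{eqn:almostsphere} from \eqref{dy} follows the paper's route, but the step where you deduce $|a| > r + r^{3/4}$ contains a genuine error of scope. You assume that in the regime of this section the rescaled regions converge to $B_1(\xi)$ with $|\xi|>1$, so that $\rho_0$ is comparable to $r$ and in particular $\rho_0 \gg r^{3/4}$. That is not the setting of Section \ref{sec:isospheres}: this section treats every sequence from alternative (b) of Lemma \ref{lem:decomposition} whose large component has spherical outer boundary, which includes the case $|\xi|=1$, where the large component merely diverges ($\rho_0 \to \infty$) and one may well have $\rho_0 = o(r^{3/4})$. Indeed, Proposition \ref{prop:nooffcentersphere} rules out \emph{all} of alternative (b) with sphere topology, and the introduction makes clear that the outlying-sphere analysis of Sections \ref{sec:outlyingspheres}--\ref{sec:isospheres} is precisely the route for $|\xi|=1$; the ``different route'' (mean curvature flow) is introduced to weaken the decay hypothesis, not to dispose of $|\xi|=1$. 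So your argument fails exactly in the case the lemma is needed for. The repair is the observation you came close to but did not exploit: since $a$ is only determined up to $O(r^{3/4})$ by \eqref{eqn:almostsphere}, the inequality $|a|>r+r^{3/4}$ is a \emph{normalization}, not a geometric consequence. The paper first shows $|a| \geq r - O(r^{3/4})$ unconditionally --- if $|a| \leq r + r^{3/4}$ then $\Sigma'$ must meet $\partial B_{r^{3/4}}$ at some $x_0$, and \eqref{eqn:almostsphere} gives $|a-x_0| = r + O(r^{3/4})$, whence $|a| \geq |a-x_0| - |x_0| \geq r - c\,r^{3/4}$ --- and then replaces $a$ by the radially dilated point $a' = (1+(C+2)r^{-1/4})a$, which shifts the center outward by $O(r^{3/4})$, preserves \eqref{eqn:almostsphere}, and achieves $|a'| > r + r^{3/4}$.

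A secondary issue is that your diameter bound for $\Sigma'$ is asserted rather than proved. The convergence $r^{-1}\Sigma \to S_1(\xi)$ holds only in $C^\infty_{loc}(\R^3\setminus\{0\})$, so it controls nothing inside $B_{\epsilon r}$; and even granting $\operatorname{diam}(\Sigma) = O(r)$, one cannot pass to the subsurface $\Sigma'$ with a bare ``therefore,'' since a short path in $\Sigma$ between points of $\Sigma'$ may run through $\Sigma \cap B_{r^{3/4}/2}$, where \eqref{dy} is unavailable. The paper handles this by using the co-area formula and the quadratic area bound to select a level curve $\{x\in\Sigma : |x|=\sigma\}$ with $\sigma\in[r^{3/4}/2,\,r^{3/4}]$ of length $O(r^{3/4})$, and then a Bonnet--Myers-type argument (exploiting the positive Gauss curvature furnished by \eqref{princur}) to connect any two points of $\Sigma'$ by a curve of length $O(r)$ that stays where \eqref{dy} holds.
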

\begin{proof}

We first show that the diameter of $\Sigma'$ is $O (r)$. We only need to consider the case where $\{x \in \Sigma : r^{3/4} /2 \leq |x| \leq r^{3/4} \}$ is non-empty.
Using the co-area formula and the quadratic area growth of isoperimetric surfaces, we see that
\[
\int_{ r^{3/4}/2}^{ r^{3/4}}
\mathcal{H}^1(\{x \in \Sigma : |x| = \sigma\}) \, d \sigma = O (r^{3/2}).
\]
We can choose a regular value $\sigma$ with $r^{3/4}/2 \leq \sigma \leq r^{3/4}$ such that the curve $\{x \in \Sigma : |x| = \sigma\}$ has length $O (r^{3/4})$. A standard variation of the argument leading to the Bonnet-Myers diameter estimate shows that any two points $p, q\in\Sigma'$ are connected by a curve in $\Sigma'$ whose length is $O(r)$. By integrating \eqref{dy} along such curves, we see that there is $a \in \R^3$ so that \eqref{eqn:almostsphere} holds. Assume now that $|a| \leq  r+ r^{3/4}$. It follows that there is $x_0 \in  \Sigma'$ with $|x_0| = r^{3/4}$. Using \eqref{eqn:almostsphere}, it follows that
\[
|a|\geq|a-x_0|-|x_0|\geq  r-c \, r^{3/4}
\]
where $c > 0$ is independent of $\Sigma$. Replacing $a$ by 
\[
a' = (1 + (C + 2) r^{-1/4}) a
\] 
completes the proof
\end{proof}


Now by \eqref{princur}, there is an open subset $\Gamma \subset \mathbb{S}^2$ and $u \in C^\infty(\Gamma)$ with 
\[
\Sigma'=\{a+u(\theta)\theta : \theta \in \Gamma \}.
\]

Let us assume for definiteness that $a = |a|(0, 0, 1)$. We have the estimate    
\begin{align} \label{eqn:shapegamma}
\mathbb{S}^2 \setminus \Gamma \subset \{ (\sin\phi\cos\theta, \sin\phi\sin\theta,\cos\phi) : \theta \in [0, 2 \pi] \text{ and } \phi \in (\pi - 2 r^{-1/4}, \pi] \}.
\end{align}
We also remark that $\Gamma = \mathbb{S}^2$ when $|a| > 2r$. 

Let $\overline{\nu}$ be the outward pointing unit normal and $\overline{p}$ the induced metric of $\Sigma'$ both computed  with respect to the Euclidean background metric. Then
\[
\overline{\nu} (\theta) = \frac{ u (\theta)  \theta -  (\nabla u ) (\theta)}{ \sqrt { u (\theta)^2 + |(\nabla u )(\theta)|^2  }} \qquad \overline p_{ij} (\theta) = u(\theta)^2 \omega_{ij} + (\partial_i u)(\theta) (\partial_j u ) (\theta) 
\]
where the gradient and its length are both computed with respect to the standard metric $\omega_{ij}$ on  $\mathbb{S}^2 \subset \R^3$ and where $i, j$ are with respect to local coordinates on $\mathbb{S}^2$.
It follows from \eqref{eqn:almostsphere} that
\begin{align}\label{uu}
u=r+O(r^{3/4})
\qquad \text{ and } \qquad \nabla u = O (r^{3/4}).
\end{align}

Note that it also follows that $\Sigma \setminus B_{r^{3/4}}$ is connected (so  $\Sigma' = \Sigma \setminus B_{r^{3/4}}$) since otherwise we would contradict the Euclidean isoperimetric inequality. 


\begin{lemma}
We have 
\begin{align}\label{bthe}
\int_{\{ a + r  \theta \in \R^3 : \theta \in \mathbb{S}^2 \setminus \Gamma\}} \frac{1}{|x|} =o(r).
\end{align}

\begin{proof}
We may assume that $r + r^{3/4} < |a| < 2 r$. Using \eqref{eqn:shapegamma}, we have that
\begin{align*}
\int_{\{ a + r  \theta : \theta \in \mathbb{S}^2 \setminus \Gamma\}} \frac{1}{|x|} = O \left( \int_{0}^{2 \pi} \int_{\pi-2r^{-\frac{1}{4}}}^{\pi} \frac{ r^2 \sin \phi \,  d \phi d \theta}{ \sqrt {|a|^2+r^2-2|a|r\cos\phi } } \right) = o(r)\end{align*}
as claimed.
\end{proof}
\end{lemma}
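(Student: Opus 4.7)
The plan is to estimate the integral directly in spherical coordinates centered at $a$, exploiting that \eqref{eqn:shapegamma} confines $\mathbb{S}^2 \setminus \Gamma$ to a polar cap of angular radius at most $2r^{-1/4}$ around the antipode of $\hat{a} = a/|a|$. This is a restricted version of the computation already carried out in \eqref{sas}, so there is no substantial obstacle; the only refinement needed is to track the smallness of the cap to gain a factor that beats $r$.

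First, I note that the claim is trivial when $|a| > 2r$, since then $\Gamma = \mathbb{S}^2$ as remarked just after \eqref{eqn:shapegamma}, so we may assume $r + r^{3/4} < |a| \leq 2r$. Taking $a = |a|(0,0,1)$ and parameterizing $\theta = (\sin\phi\cos\vartheta, \sin\phi\sin\vartheta, \cos\phi)$, the induced surface element on $S_r(a)$ is $r^2 \sin\phi \, d\phi \, d\vartheta$. Substituting $\psi = \pi - \phi$, using $\sin\phi = \sin\psi$, and applying the identity $|a + r\theta|^2 = (|a|-r)^2 + 2r|a|(1 - \cos\psi)$, the integral is dominated by
\[
2\pi \int_0^{2r^{-1/4}} \frac{r^2 \sin\psi}{\sqrt{(|a|-r)^2 + 2r|a|(1 - \cos\psi)}} \, d\psi.
\]
For large $r$ the range $[0, 2r^{-1/4}]$ lies well within $[0,1]$, so the elementary bounds $\sin\psi \leq \psi$ and $1 - \cos\psi \geq \psi^2/4$ apply. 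Substituting $u = (|a|-r)^2 + \tfrac{1}{2}|a|r\, \psi^2$ then integrates the resulting bound explicitly to
\[
\frac{4\pi r}{|a|} \left( \sqrt{(|a|-r)^2 + 2|a|r^{1/2}} - (|a|-r) \right).
\]

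Finally, multiplying by the conjugate and using $|a|-r > r^{3/4}$ from Lemma \ref{position} yields
\[
\frac{8\pi r \cdot |a|r^{1/2}}{|a|\bigl(\sqrt{(|a|-r)^2 + 2|a|r^{1/2}} + (|a|-r)\bigr)} \leq \frac{8\pi r^{3/2}}{|a|-r} \leq 8\pi \, r^{3/4} = o(r),
\]
which gives the desired estimate. The whole argument is a direct computation: the only subtlety is recognizing that the factor $r^{-1/4}$ controlling the size of the cap $\mathbb{S}^2 \setminus \Gamma$ provides precisely the gain needed to convert the expected $O(r)$ bound (from integrating over the full sphere) into $o(r)$.
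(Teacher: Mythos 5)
Your proof is correct and follows the same route as the paper: confine the integral to the polar cap given by \eqref{eqn:shapegamma} and compute in spherical coordinates centered at $a$; the paper simply asserts that the resulting one-dimensional integral is $o(r)$, whereas you carry out the substitution explicitly and obtain the sharper bound $O(r^{3/4})$ using $|a|-r>r^{3/4}$. A small point in your favor: your denominator $\sqrt{(|a|-r)^2+2|a|r(1-\cos\psi)}$ correctly places the cap on the origin-facing side of $S_r(a)$, where $|x|$ can be as small as $r^{3/4}$ and the estimate is genuinely needed, while the paper's displayed integrand carries the opposite sign on the cross term and, read literally, would describe the harmless far cap where $|x|\approx|a|+r$.
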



\begin{proposition}\label{prop:av02}
We have  
\begin{align}
\area (\Sigma) - \overline {\area} (\Sigma) &= \area(S_r(a)) - 4 \pi r^2 +o( r ) \\    \vol (\Omega) - \overline \vol (\Omega)  & =  \vol(B_r(a))  - \frac{4 \pi r^3}{3} + o (r^2).
\end{align}

\begin{proof}
Set
$\Sigma''=\Sigma-\Sigma'=\Sigma\cap B_{r^{3/4}}$.
Then
\begin{align} \label{adef}
\area (\Sigma) = & \,  \overline {\area} (\Sigma)+\frac{1}{2}\int_{\Sigma} (\delta^{ij} - \overline \nu^i \overline \nu^j)\sigma_{ij} + O \int_{\Sigma} \frac{1}{|x|^2} \\
=& \,  \overline {\area} (\Sigma)+\frac{1}{2}\int_{\Sigma'}(\delta^{ij} - \overline \nu^i \overline \nu^j) \sigma_{ij}  + O \int_{\Sigma''} \frac{1}{|x|}  + O \int_{\Sigma} \frac{1}{|x|^2} .\nonumber
\end{align}
It follows from Lemmas \ref{lem:quadraticgrowth} and \ref{lem:polydecay} that
\begin{align*}
\int_{\Sigma'} \frac{1}{|x|} = O(r) \qquad \int_{\Sigma''}\frac{1}{|x|} = O(r^{3/4}) \qquad  \int_{\Sigma}\frac{1}{|x|^2}=O( \log r)
\end{align*}
so that 
\begin{align}\label{asi}
\area (\Sigma) = \, \overline{\area} (\Sigma)  +\frac{1}{2}\int_{\Sigma'}(\delta^{ij} - \overline \nu^i \overline \nu^j)\sigma_{ij} +o(r).
\end{align}
Now 
\begin{align} \label{al3}
\int_{x \in \Sigma'}((\delta^{ij} - \overline \nu^i \overline \nu^j) \sigma_{ij} )(x) \\
=\int_{\theta \in \Gamma} \left( (\delta^{ij} - \overline \nu^i \overline \nu^j)  \sigma_{ij} \right) (a + u (\theta) \theta) \overbrace {u(\theta)  \sqrt {  u (\theta)^2 + |(\nabla u)(\theta)|^2  }}^{\text{area element}} \nonumber  \\
=r^2 \int_{\theta \in \Gamma} \left( (\delta^{ij} - \overline \nu^i \overline \nu^j)  \sigma_{ij} \right) (a + u (\theta) \theta) (1+O(r^{-1/4})) \nonumber\\
=O(r^{3/4}) + r^2 \int_{\theta \in \Gamma} \left( (\delta^{ij} - \overline \nu^i \overline \nu^j)  \sigma_{ij} \right) (a + u (\theta) \theta) \nonumber
\end{align}
where  we have used \eqref{uu} in the second equality. 
On the other hand,
\[
\int_{\theta \in \Gamma} \left( (\delta^{ij} - \overline \nu^i \overline \nu^j)  \sigma_{ij} \right) (a + u (\theta) \theta) = o (r) +  \int_{\theta \in \Gamma} (\delta^{ij} -  \theta^i \theta ^j)  \sigma_{ij} (a + r \theta)
\]
since 
\begin{align}\label{al4} 
&\int_{\theta \in \Gamma} (\delta^{ij} - \overline \nu^i \overline \nu^j )  \sigma_{ij}(a + u (\theta) \theta)  -\int_{\theta \in \Gamma} (\delta^{ij} - \theta^i \theta^j) \sigma_{ij}(a + r \theta) \\
= & r^{-1/4} \, O \,  \int_{\theta \in \Gamma} \frac{1}{| a+ r \theta|}+ r^{3/4} \, O \, \int_{\theta \in \Gamma}  \frac{1}{|a + r \theta|^2} = O ( r^{3/4} \log r ) =o(r).\nonumber
\end{align}
Substituting \eqref{al3} and \eqref{al4} into \eqref{asi} gives
\begin{align}\label{as1}
\area (\Sigma) = \overline{\area} (\Sigma) + \frac{r^2}{2}\int_{\theta \in \Gamma}(\delta^{ij} - \theta^i \theta^j) \sigma_{ij} (a + r \theta)+o(r).
\end{align}
A direct computation shows that
\begin{align} \label{asa}
\area (S_r(a))
=&4\pi r ^2+ \frac{1}{2} \int_{S_r(a)} (\delta^{ij} - n^i n^j) \sigma_{ij}  + O\int_{S_{ r }(a)}  \frac{1}{|x|^2} \nonumber\\
=&4\pi r ^2+ \frac{r^2}{2} \int_{\Gamma}(\delta^{ij} - \theta^i \theta^j) \sigma_{ij} (a + r \theta) + O \int_{S_{ r }(a)-\Gamma} \frac{1}{|x|}+o( r ).\nonumber\\
=&4\pi r ^2+ \frac{r^2}{2}  \int_{\Gamma}(\delta^{ij} - \theta^i \theta^j) \sigma_{ij} (a + r \theta)  +o( r )
\end{align}
where we have used \eqref{bthe} in the last equality.
Combining \eqref{as1} and \eqref{asa} yields
\[
\area (\Sigma) - \overline {\area} (\Sigma)= \area(S_r(a)) - 4 \pi r^2 +o( r ).
\]

To give an estimate of $\vol (\Omega)$, we assume for definiteness that $a= |a|(0,0,1)$ where $|a| > r$. Set 
\[
\Omega'=\{x \in \Omega: |x|<r^{3/4}\} \qquad \text{ and } \qquad \Omega''=\{x \in \Omega : |x|\geq r^{3/4}\}.
\]
Then
\begin{align*}
&\vol (\Omega)-\vol (B_r(a)) \\
=&\int_{\Omega}\sqrt{\det(g_{ij})} -\int_{B_{r}(a)}\sqrt{\det(g_{ij})} \\
= &  \overline \vol (\Omega) - \overline \vol (B_r(a)) +  O \int_{\{x : r - c r^{3/4} \leq |x - a| \leq r + c r^{3/4}\}   }\frac{1}{|x|}  + O \int_{\{x \in \Omega : |x| < r^{3/4}\} } \frac{1}{|x|}\\
=&  \overline \vol (\Omega) - \overline \vol (B_r(a)) + o (r^2)
\end{align*}
where we have used \eqref{uu} in the third inequality.
\end{proof}
\end{proposition}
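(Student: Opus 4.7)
The plan is to reduce the two identities for $\Sigma$ and $\Omega$ to the corresponding identities for the Euclidean model $S_r(a)$ and $B_r(a)$ already computed in Proposition \ref{prop:av01}, by using the fact (established in Lemma \ref{position}) that the outer part of $\Sigma$ is a graph $\{a + u(\theta)\theta\}$ over $\mathbb{S}^2$ with $u = r + O(r^{3/4})$ and $\nabla u = O(r^{3/4})$, and that the mass of $\Sigma$ concentrated in the ball $B_{r^{3/4}}$ near the center is controlled by the quadratic area growth of isoperimetric regions (Lemma \ref{lem:quadraticgrowth}).

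For the area identity, I would first expand $d\mu = \left(1 + \tfrac{1}{2}(\delta^{ij} - \bar\nu^i \bar\nu^j)\sigma_{ij} + O(|x|^{-2})\right) d\bar\mu$ along $\Sigma$, so that
\[
\area(\Sigma) - \overline{\area}(\Sigma) = \tfrac{1}{2}\int_\Sigma (\delta^{ij} - \bar\nu^i \bar\nu^j)\sigma_{ij} + O\!\int_\Sigma |x|^{-2}.
\]
The $|x|^{-2}$ remainder is $O(\log r)$ by Lemmas \ref{lem:quadraticgrowth}, \ref{lem:polydecay}, and the contribution from the inner piece $\Sigma'' = \Sigma \cap B_{r^{3/4}}$ is $O(r^{3/4})$ since $|\sigma| = O(|x|^{-1})$. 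On the outer piece $\Sigma' = \Sigma \setminus B_{r^{3/4}}$, I would change variables to $\theta \in \Gamma \subset \mathbb{S}^2$ and replace $u(\theta)\sqrt{u^2 + |\nabla u|^2}$ by $r^2$ at the cost of an $O(r^{-1/4})$ relative error, then replace $\bar\nu$ by $\theta$ and $u(\theta)$ by $r$ in the argument of $\sigma_{ij}$, using the quantitative closeness \eqref{eqn:almostsphere}, \eqref{uu}; the $r^{-1/4}$ and $r^{3/4}$ deviations each contribute $o(r)$ after integration. The resulting integral over $\Gamma$ agrees with the expression for $\area(S_r(a)) - 4\pi r^2$ from Proposition \ref{prop:av01} up to the missing cap $\mathbb{S}^2 \setminus \Gamma$, which contributes $o(r)$ precisely by Lemma \ref{bthe}.

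For the volume identity, I would compare $\int_\Omega \sqrt{\det g_{ij}}$ with $\int_{B_r(a)} \sqrt{\det g_{ij}}$ rather than expanding the volume form directly. Since $\Omega$ and $B_r(a)$ agree outside $B_{r^{3/4}}$ up to a thin graphical shell of width $O(r^{3/4})$ over (most of) $S_r(a)$, and the symmetric difference near the center is contained in $B_{r^{3/4}}$, the difference of the two $g$-integrals equals $\overline\vol(\Omega) - \overline\vol(B_r(a))$ plus error terms of the form $O\!\int_{\text{shell}} |x|^{-1} + O\!\int_{B_{r^{3/4}}} |x|^{-1}$, both of which are $o(r^2)$ by a direct polar coordinates computation (using $|a| \lesssim r$).

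The main obstacle is bookkeeping the low-regularity closeness of $\Sigma$ to $S_r(a)$: one only knows $u = r + O(r^{3/4})$ and $\nabla u = O(r^{3/4})$, which is just barely good enough to absorb the relative $O(r^{-1/4})$ errors in the surface integrals and still obtain a true $o(r)$ remainder --- this is where the choice of the threshold $r^{3/4}$ in Lemma \ref{position} and Proposition \ref{prop:moser} becomes essential. The supplementary estimate \eqref{bthe} for the missing cap $\mathbb{S}^2 \setminus \Gamma$ (in the case $|a|$ only slightly exceeds $r$) is the other delicate point, since naive estimates would produce a logarithmic divergence; the proof that this still yields $o(r)$ relies on the quantitative control of $\mathbb{S}^2 \setminus \Gamma$ in \eqref{eqn:shapegamma}.
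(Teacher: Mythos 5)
Your proposal is correct and follows essentially the same route as the paper: the same decomposition $\Sigma = \Sigma' \cup \Sigma''$ at the threshold $|x| = r^{3/4}$, the same change of variables to the graph $\{a + u(\theta)\theta : \theta \in \Gamma\}$ with the errors absorbed via \eqref{uu} and \eqref{eqn:almostsphere}, the same use of \eqref{bthe} for the missing cap, and the same direct comparison of $\int_\Omega \sqrt{\det g}$ with $\int_{B_r(a)}\sqrt{\det g}$ through the thin shell and the central ball for the volume identity. No substantive differences to report.
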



We arrive at the main results of this section, asserting that the isoperimetric deficit of large outlying isoperimetric spheres is \emph{very close} to Euclidean. The strategy of the approximation argument we have used here is illustrated in Figure \ref{fig:approx}.

\begin {corollary} We have 
\begin {align}
\frac{2}{\area(\Sigma)} \left( \vol (\Omega) - \frac{\area(\Sigma)^{3/2}}{6 \sqrt {\pi}} \right)  \leq o (1).
\end {align}
\begin {proof} 
We abbreviate 
\[
z = \int_{S_r(a)} (\delta^{ij} - \overline \nu^i \overline \nu^j) \sigma_{ij}.
\]
Note that $z = O (r)$ and that $\overline{\area} (\Sigma) = 4 \pi r^2 + o (r^2)$. Using Propositions \ref{prop:av01} and \ref{prop:av02} and also the Euclidean isoperimetric inequality in the last step, we obtain that 
\begin {align*}
\vol (\Omega) - \frac{\area(\Sigma)^{3/2}}{6 \sqrt {\pi}}  \\ = \overline{\vol} (\Omega) + \frac{z \,  r}{4} - \frac{\overline \area (\Sigma)^{3/2}} {6 \sqrt \pi} \left ( 1 + \frac{z \,  r}{2 \overline {\area} (\Sigma)} + o \left( \frac{r}  { \overline {\area}( \Sigma) } \right)  \right)^{3/2} +  o (r^2) \\ 
=   \overline{\vol} (\Omega) - \frac{\overline \area (\Sigma)^{3/2}} {6 \sqrt \pi} + o (r^2)  \leq o (r^2). 
\end {align*}
\end {proof}
\end {corollary}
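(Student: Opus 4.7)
The plan is to substitute the asymptotic expansions from Propositions \ref{prop:av01} and \ref{prop:av02} into the isoperimetric deficit $\vol(\Omega) - \area(\Sigma)^{3/2}/(6\sqrt\pi)$, to exploit a cancellation between the ambient-metric corrections to the volume and to $\area^{3/2}$, and to close with the Euclidean isoperimetric inequality.

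Concretely, I abbreviate $z := \int_{S_r(a)}(\delta^{ij} - n^i n^j)\sigma_{ij}$, which is $O(r)$ by the $\tau = 1$ decay of $\sigma$ and the Euclidean area of $S_r(a)$. Combining Propositions \ref{prop:av01} and \ref{prop:av02} gives the two key expansions
\[
\area(\Sigma) = \overline\area(\Sigma) + z/2 + o(r), \qquad \vol(\Omega) = \overline\vol(\Omega) + rz/4 + o(r^2),
\]
with $\overline\area(\Sigma) = 4\pi r^2 + o(r^2)$. I then Taylor-expand $x \mapsto x^{3/2}$ around $\overline\area(\Sigma)$: using $\overline\area(\Sigma)^{1/2} = 2\sqrt\pi r + o(r)$, the linear term produces exactly $rz/4$ after division by $6\sqrt\pi$, while the quadratic remainder is $O(\overline\area(\Sigma)^{-1/2}z^2) = O(r) = o(r^2)$, so
\[
\frac{\area(\Sigma)^{3/2}}{6\sqrt\pi} = \frac{\overline\area(\Sigma)^{3/2}}{6\sqrt\pi} + \frac{rz}{4} + o(r^2).
\]

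Subtracting this identity from the expansion of $\vol(\Omega)$ produces the clean cancellation
\[
\vol(\Omega) - \frac{\area(\Sigma)^{3/2}}{6\sqrt\pi} = \overline\vol(\Omega) - \frac{\overline\area(\Sigma)^{3/2}}{6\sqrt\pi} + o(r^2).
\]
The classical Euclidean isoperimetric inequality applied to $\Omega$ (a bounded region in the chart at infinity with Euclidean boundary $\Sigma$) bounds the right-hand side by $o(r^2)$. Dividing through by $\area(\Sigma) = 4\pi r^2(1 + o(1))$ yields the stated $o(1)$ bound.

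There is no real obstacle here: beyond the two propositions, the argument consists only of a one-term Taylor expansion and a single invocation of the Euclidean isoperimetric inequality. The sole point requiring care is that the $O(z^2/r)$ Taylor remainder sits comfortably inside the $o(r^2)$ error budget, which follows from $z = O(r)$ together with $\overline\area(\Sigma) \sim r^2$.
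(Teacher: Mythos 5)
Your proposal is correct and follows essentially the same route as the paper: the same abbreviation $z=O(r)$, the same substitution of Propositions \ref{prop:av01} and \ref{prop:av02}, the same Taylor expansion of $x\mapsto x^{3/2}$ producing the cancellation of the $rz/4$ terms, and the same final appeal to the Euclidean isoperimetric inequality. Your bookkeeping of the quadratic Taylor remainder as $O(z^2/r)=o(r^2)$ is exactly the point the paper's displayed computation also relies on.
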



The above arguments yield the following proposition.

\begin {proposition} \label{prop:nooffcentersphere} Let $(M, g)$ be a complete Riemannian $3$-manifold that is asymptotically flat at rate $\tau = 1$ and which has non-negative scalar curvature and positive mass. There does not exist a sequence $\{\Omega_{V_k}\}_{k=1}^\infty$ as in alternative (b) of Lemma \ref{lem:decomposition} such that the boundary of the unique large component of each $\Omega_{V_k}$ has the topology of a sphere.
\end {proposition}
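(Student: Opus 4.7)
The plan is to derive a contradiction with the positive mass theorem by combining the preceding Corollary with the sharp isoperimetric formula for the ADM mass from Appendix \ref{sec:sharpisoperimetric}.

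Suppose for contradiction that such a sequence exists, and write $\Omega_{V_k} = \Omega_{V_k}^{res} \cup \Omega_{V_k}^\infty$ as in Lemma \ref{lem:decomposition}(b), with $\Sigma_k = \partial \Omega_{V_k}^\infty \setminus \partial M$ a topological sphere of area radius $r_k \to \infty$. Since $\Omega_{V_k}^{res}$ is contained in an $\varepsilon_k$-neighborhood of $\partial M$ with $\varepsilon_k\to 0$ and, by Lemma \ref{lem:characterizationlargeiso}, is bounded by $\partial M$ together with nearby stable constant mean curvature surfaces, both $\vol(\Omega_{V_k}^{res})$ and $\area(\partial\Omega_{V_k}^{res}\setminus\partial M)$ remain uniformly bounded as $k\to\infty$.

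First I would apply the preceding Corollary to the divergent component $\Omega_{V_k}^\infty$, whose outer boundary is the topological sphere $\Sigma_k$, to obtain
\[
\vol(\Omega_{V_k}^\infty) - \frac{\area(\Sigma_k)^{3/2}}{6\sqrt{\pi}} \leq o(r_k^2).
\]
Then I would unfold the identities $V_k = \vol(\Omega_{V_k}^\infty) + O(1)$ and $A(V_k) = \area(\Sigma_k) + O(1) = \area(\Sigma_k)(1 + O(r_k^{-2}))$ and use a first-order expansion of $(1+x)^{3/2}$ at $x = O(r_k^{-2})$ to conclude
\[
V_k - \frac{A(V_k)^{3/2}}{6\sqrt{\pi}} \leq o(r_k^2) + O(r_k) = o(r_k^2),
\]
and hence
\[
\frac{2}{A(V_k)}\left(V_k - \frac{A(V_k)^{3/2}}{6\sqrt{\pi}}\right) \leq o(1)
\]
as $k\to\infty$. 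On the other hand, the sharp isoperimetric characterization of the ADM mass derived in Appendix \ref{sec:sharpisoperimetric} forces this quantity to converge to $m_{ADM}$, which is strictly positive by hypothesis. This yields the desired contradiction.

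The substantive work is already carried out in the preceding Corollary, which shows that large outlying spherical isoperimetric regions are \emph{too} Euclidean; this relies on the Christodoulou--Yau and Huisken--Ilmanen estimates combined with the careful integration by parts developed earlier in Section \ref{sec:outlyingspheres}. Given that input, the only care required here is the routine bookkeeping to absorb the contribution of $\Omega_{V_k}^{res}$: since its volume and outer boundary area are uniformly bounded while $r_k\to\infty$, these corrections are genuinely of lower order and cannot rescue the limit from being nonpositive.
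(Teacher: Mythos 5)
Your proposal is correct and follows the paper's intended route: the paper's proof of this proposition is precisely the preceding Corollary (the isoperimetric deficit of the outlying spherical component is $o(1)$) played off against the characterization $m_{ADM}=\lim_{V\to\infty}\frac{2}{A(V)}\bigl(V-\frac{A(V)^{3/2}}{6\sqrt\pi}\bigr)>0$ from Appendix \ref{sec:sharpisoperimetric}. You have merely made explicit the bookkeeping for the residual component, which the paper leaves implicit, and your handling of those lower-order terms is sound.
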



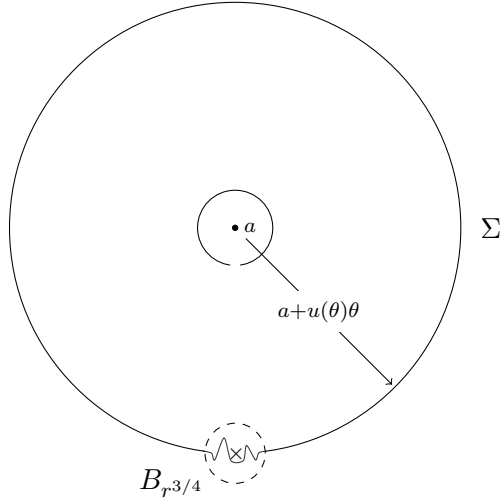
\begin{figure} 
\begin{tikzpicture}
	\node at (0:3.4) {$\Sigma$};
	
	\draw (0,0) circle (3);
	\filldraw [white] (270:3) circle (.4);
	\draw [smooth, opacity=.8] plot coordinates {(261.8:3) (264:3.015) (265:3.1) (267:2.8) (269:3.1) (272:3.1) (273:2.9) (275:3.1) (276:3.01) (278:3)};

	\node at (256:3.5) {$B_{r^{3/4}}$};
	\draw [dashed] (270:3) circle (.4);
	\node at (270.3:3) {$\scriptstyle{\times}$};
	
	\filldraw (0,0) circle (1pt);
	\node at (.2,0) {$\scriptstyle{a}$};
	
	\draw (0,0) circle (.5);
	
	\filldraw [white] (270:.5) circle (.06); 
	
	\draw [->] (315:.2) -- (315:2.95) node [midway,fill=white] {$\scriptstyle{a+u(\theta) \theta}$};

\end{tikzpicture}
\caption{When the boundary $\Sigma$ of the large component of an isoperimetric region has genus zero, we show that $\Sigma$ is \emph{very} close to a sphere $S_{r}(a)$ outside of $B_{r^{3/4}}$; note that ``$\times$'' represents the origin here. This allows us to approximate the isoperimetric deficit of $\Sigma$ by that of $S_{r}(a)$. We show that in the scenario depicted here, the isoperimetric deficit of $S_{r}(a)$ (and thus of $\Sigma$) is too close to Euclidean.}
\label{fig:approx}
\end{figure}


\section {Hawking mass of outlying stable constant mean curvature tori} \label{sec:mHtori}

The barred quantities in the statement of the next lemma are with respect to the Euclidean background metric in the chart at infinity \eqref{eqn:chartatinfinity}, as  in Appendix \ref{sec:gbarg}. 

\begin {lemma} \label{lem:estimateHawkingtori}
Let $(M, g)$ be a complete Riemannian $3$-manifold that is asymptotically flat at rate $\tau >1/2$. There are $\rho_0 > 1$ and $c > 0$ depending only on $(M, g)$ with the following property. Let $\Sigma \subset M $ be a closed surface such that $B_\rho \cap \Sigma = \emptyset$ for some $\rho \geq \rho_0$. Then 
\[
\left | \int_\Sigma H^2 d \mu - \int_\Sigma \overline H^2 d \overline \mu \right| \leq  c \rho^{-\tau} \int_\Sigma |h|^2 d \mu.  
\]
\end {lemma}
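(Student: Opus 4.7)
The plan is a direct pointwise comparison of the integrands $H^2\,d\mu$ and $\overline H^2\,d\overline\mu$, using the quantitative relations between ambient and Euclidean geometric quantities on $\Sigma$ that are collected in Appendix \ref{sec:gbarg}. Since $B_\rho \cap \Sigma = \emptyset$, every point of $\Sigma$ satisfies $|x| \ge \rho \ge \rho_0$, so every appearance of $|x|^{-\tau}$ in the comparison can be uniformly bounded by $\rho^{-\tau}$; this is the source of the prefactor in the claimed inequality.

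First I would write
\[
\int_\Sigma H^2\,d\mu - \int_\Sigma \overline H^2\,d\overline\mu = \int_\Sigma (H - \overline H)(H + \overline H)\,d\mu + \int_\Sigma \overline H^{\,2}\bigl(d\mu - d\overline\mu\bigr)
\]
and apply the Appendix \ref{sec:gbarg} estimates, which under the asymptotic flatness assumption $|\sigma_{ij}| + |x|\,|\partial \sigma_{ij}| = O(|x|^{-\tau})$ take the schematic form
\[
|H - \overline H| \le c\,|x|^{-\tau}\bigl(|h|_g + |x|^{-1}\bigr), \qquad |H + \overline H| \le c\bigl(|h|_g + |x|^{-1}\bigr), \qquad |d\mu - d\overline\mu| \le c\,|x|^{-\tau}\,d\mu.
\]
Multiplying out and using $2ab \le a^2 + b^2$ on the mixed product $|h|_g\,|x|^{-1}$ yields the pointwise bound
\[
\bigl|H^2\,d\mu - \overline H^{\,2}\,d\overline\mu\bigr| \le c\,\rho^{-\tau}\bigl(|h|_g^2 + |x|^{-2}\bigr)\,d\mu,
\]
and integration over $\Sigma$ produces the desired inequality modulo a residual tail $c\,\rho^{-\tau}\int_\Sigma|x|^{-2}\,d\mu$.

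The step I expect to be the main obstacle is folding that $|x|^{-2}$ tail back into $\int_\Sigma |h|_g^2\,d\mu$. The reason this is possible is essentially structural: the $|x|^{-2}$ contribution arises only from the square of the $|x|^{-1}$ piece in $|H - \overline H|$, but that piece in turn originates from the $\partial\sigma$-driven correction to the normal and the Christoffel symbols along $\Sigma$, which is linear in (not independent of) the second fundamental form after a more careful expansion. Redoing the algebra with the full Appendix \ref{sec:gbarg} identities —  writing the pointwise difference as $H - \overline H = a^{ij}(x)\,\sigma_{ij}\,h_{k\ell} + b^{ijk}(x)\,\partial_k\sigma_{ij}$ with bounded tensor coefficients, and pairing each term against $H + \overline H$ before using Young's inequality — rearranges every apparent $|x|^{-2}$ term into a multiple of $|x|^{-\tau}\,|h|_g^2$ up to a harmless factor. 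The decay threshold $\tau > 1/2$ is not used in any delicate way here; it only needs to be positive for the pointwise estimate to be nontrivial. Once this absorption is carried out, integration over $\Sigma$ gives the Lemma.
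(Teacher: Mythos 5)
Your decomposition of the difference and the pointwise comparisons drawn from Appendix \ref{sec:gbarg} are fine, and they correctly reduce the lemma to controlling a residual tail. The gap is in the absorption step, and both reasons you give for why it should work are incorrect. First, the $O(|x|^{-1-\tau})$ term in $H-\overline H$ is \emph{not} linear in the second fundamental form after a more careful expansion: it arises from $\partial\sigma$ contracted with $\nu$ and with tangential directions (schematically $\tr_\Sigma(\nabla_{\,\cdot\,}\sigma)(\nu,\,\cdot\,)-\tr_\Sigma\nabla_\nu\sigma$, which is precisely the $h$-independent ``mass integral'' that survives in the expansion of Appendix \ref{sec:Hawkingoutlying}), and it does not vanish at points where $h=0$, so no algebraic rearrangement converts the $|x|^{-2}$ residue into a multiple of $|x|^{-\tau}|h|^2$ pointwise. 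Second, the inequality your crude reduction then requires, $\int_\Sigma|x|^{-2}\,d\mu\le C\int_\Sigma|h|^2\,d\mu$, is simply false for general closed surfaces outside $B_{\rho_0}$: for $\Sigma=S_R(a)$ with $|a|=R+\rho_0$, the computation \eqref{sas} shows the left-hand side grows like $2\pi\log(R/\rho_0)$ as $R\to\infty$, while the right-hand side stays equal to $8\pi+o(1)$.

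The absorption is nevertheless true, but the mechanism is integral rather than pointwise, and you must not discard the weight: keep the residual as $\int_\Sigma|x|^{-2-\tau}\,d\mu$. By L.~Simon's monotonicity for the Willmore energy (applied in the chart at infinity, where the metric is a small perturbation of the Euclidean one), one has $\area(\Sigma\cap B_\sigma)\le C\sigma^2\int_\Sigma H^2\,d\mu$ for every $\sigma>0$ and every closed $\Sigma$, so the layer-cake formula gives
\[
\int_\Sigma|x|^{-2-\tau}\,d\mu=(2+\tau)\int_\rho^\infty\sigma^{-3-\tau}\area(\Sigma\cap B_\sigma)\,d\sigma\le C\,\tau^{-1}\rho^{-\tau}\int_\Sigma H^2\,d\mu\le C\,\tau^{-1}\rho^{-\tau}\int_\Sigma|h|^2\,d\mu,
\]
using $\Sigma\cap B_\rho=\emptyset$ and $H^2\le 2|h|^2$. (For the surfaces to which the lemma is actually applied one may instead invoke Lemmas \ref{lem:quadraticgrowth} and \ref{lem:polydecay} with $\alpha=2+\tau$, together with the Willmore-type lower bound $\int_\Sigma|h|^2\,d\mu\ge 8\pi-o(1)$.) Note that the convergence of $\int_\rho^\infty\sigma^{-1-\tau}\,d\sigma$ is exactly where $\tau>0$ enters; it is a global rather than a pointwise matter, and without it one picks up the logarithm visible in the counterexample above. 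With this replacement, and one harmless self-improvement for $\rho_0$ large to trade the Euclidean Willmore energy appearing in the area-ratio bound for $\int_\Sigma|h|^2\,d\mu$, your argument closes.
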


Recall that there is $\delta > 0$ such that 
\[
\int_{\overline \Sigma} \overline H^2 d \overline \mu \geq 16 \pi +  \delta
\]
for every closed surface $\overline \Sigma \subset \R^3$ of \emph{positive} genus.\footnote{In fact, one has that $
\int_{\overline \Sigma} \overline H^2 d \overline \mu \geq 8 \pi^2$ by the solution of the Willmore conjecture by F. Marques and A. Neves \cite{Marques-Neves:Willmore}.
} In conjunction with Lemma \ref{lem:estimateHawkingtori} and \eqref{CYtori}, we obtain the following result.

\begin {proposition} \label{prop:Hawkinggenus}
Let $(M, g)$ be a complete Riemannian $3$-manifold that is asymptotically flat and which has non-negative scalar curvature.  There is $\rho_0 > 1$ with the following property. Let $\Sigma \subset M$ be a connected closed stable constant mean curvature surface of positive genus and mean curvature $H >0$. Then 
\[
H^2 \area (\Sigma) \geq 16 \pi.
\] 
provided that $\Sigma \cap B_{\rho_0} = \emptyset$. 
\end {proposition}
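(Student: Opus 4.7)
The plan is to argue by contradiction. Suppose $\Sigma$ is a connected closed stable constant mean curvature surface of positive genus with $\Sigma \cap B_{\rho_0} = \emptyset$ and $H^2 \area(\Sigma) < 16\pi$. The strategy is to show that $\Sigma$, viewed via the chart at infinity as an immersed surface $\overline{\Sigma} \subset \R^3$, has Euclidean Willmore integral $\int_{\overline{\Sigma}} \overline{H}^2 \, d\overline{\mu}$ that is on the one hand forced to exceed $16\pi + \delta$ by a topological lower bound, but on the other hand can be made arbitrarily close to $H^2 \area(\Sigma) < 16\pi$ by choosing $\rho_0$ large, using Lemma \ref{lem:estimateHawkingtori}.

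First I would use \eqref{CYtori} combined with the non-negativity of the scalar curvature to extract a bound on $\int_\Sigma |\mathring{h}|^2 \, d\mu$. Explicitly, if $H^2 \area(\Sigma) < 16\pi$, then \eqref{CYtori} together with $R \geq 0$ gives
\[
\int_\Sigma |\mathring{h}|^2 \, d\mu \leq \frac{3}{2}\Bigl(\frac{64\pi}{3} - H^2 \area(\Sigma)\Bigr) < 32 \pi.
\]
Since $|h|^2 = |\mathring{h}|^2 + \tfrac{1}{2} H^2$ on a two-dimensional surface, this yields the universal bound
\[
\int_\Sigma |h|^2 \, d\mu < 32\pi + \tfrac{1}{2}\cdot 16\pi = 40 \pi.
\]

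Next, I would invoke Lemma \ref{lem:estimateHawkingtori}, which, because $\Sigma \cap B_{\rho_0} = \emptyset$, gives
\[
\int_{\overline{\Sigma}} \overline{H}^2 \, d\overline{\mu} \leq \int_\Sigma H^2 \, d\mu + c \rho_0^{-\tau} \int_\Sigma |h|^2 \, d\mu < 16\pi + 40 \pi c \rho_0^{-\tau}.
\]
Here I am using that $H$ is constant so $\int_\Sigma H^2 \, d\mu = H^2 \area(\Sigma)$. Since $\Sigma$ lies in the asymptotic chart, $\overline{\Sigma}$ is a genuine closed immersed surface of positive genus in $\R^3$, so the Li--Yau/Willmore-type inequality recalled just before the proposition statement yields a universal constant $\delta > 0$ with
\[
\int_{\overline{\Sigma}} \overline{H}^2 \, d\overline{\mu} \geq 16\pi + \delta.
\]

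Combining the last two displayed inequalities produces $\delta < 40\pi c \rho_0^{-\tau}$. Choosing $\rho_0 > 1$ so large (depending only on $(M,g)$ through $c$, $\tau$, and the universal $\delta$) that $40\pi c \rho_0^{-\tau} < \delta$ yields the desired contradiction. The main potential obstacle is verifying that Lemma \ref{lem:estimateHawkingtori} can be applied with $\Sigma$ playing the role of the surface there --- but since the hypothesis $\Sigma \cap B_{\rho_0} = \emptyset$ matches exactly and $\int_\Sigma |h|^2 \, d\mu$ has already been bounded using \eqref{CYtori}, this is immediate. No other subtleties arise, and the mean curvature need not be assumed positive for the argument; the hypothesis $H > 0$ is used only implicitly in setting up the Hawking mass viewpoint.
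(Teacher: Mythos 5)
Your proof is correct and is precisely the argument the paper intends: it combines \eqref{CYtori} (with $R\geq 0$) to get a universal bound on $\int_\Sigma |h|^2\,d\mu$, Lemma \ref{lem:estimateHawkingtori} to transfer the Willmore integral to the Euclidean background, and the strict gap $\int_{\overline\Sigma}\overline H^2\,d\overline\mu\geq 16\pi+\delta$ for positive genus. The quantitative details (the bound $40\pi$ and the choice of $\rho_0$ with $40\pi c\rho_0^{-\tau}<\delta$) are all in order.
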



\section {Proof of Theorem \ref{thm:main} assuming stronger decay \eqref{eqn:Masigma} and \eqref{eqn:MaR}} \label{sec:mainproofapproximate}

Our strategy here is similar to that of Section 9 in \cite {Chodosh:large-iso}. By contrast, we work on the level of the first derivative of the isoperimetric profile, rather than on the level of second derivatives. 

\begin {proposition} 
Let $(M, g)$ be a complete Riemannian $3$-manifold that is asymptotically flat at rate $\tau = 1$ and which has non-negative scalar curvature and positive mass.  There is a sequence of isoperimetric regions $\{ \Omega_{V_k} \}_{k=1}^\infty$ as in alternative (a) of Lemma \ref{lem:decomposition} such that the boundary of each $\Omega_{V_k}$ has the topology of a sphere. 
\begin {proof} Suppose not. We know from Proposition \ref{prop:nooffcentersphere} that for some $V_0> 0$ the (unique) large component of every isoperimetric region $\Omega_V$ of volume $V \geq V_0$ has boundary $\Sigma_V$ of positive genus. Moreover, the surfaces $\Sigma_V$ diverge in $(M, g)$ as $V \to \infty$. Let  $H_V > 0$ denote the mean curvature of $\Sigma_V$. Let $V > V_0$ be such that the isoperimetric profile is differentiable at $V$. Then 
\[
A' (V) = H_V \qquad \text{ and } \qquad 16 \pi - A'(V)^2 A(V) \leq 16 \pi - H_V^2 \area(\Sigma_V) \leq 0.
\]
provided that $V_0 > 0$ is sufficiently large. The derivative of the absolutely continuous function 
\[
W \mapsto W - \frac{A(W)^{3/2}}{6 \sqrt \pi}
\]
at such volumes $V > V_0$ equals
\[
1 - \frac{A'(V) \sqrt  {A(V)}}{4 \sqrt \pi} = \frac{1}{16 \pi} \frac{16 \pi - A'(V)^2 A(V)}{1 + A'(V) \sqrt {A(V)} / \sqrt {16 \pi} } \leq 0.
\]
In particular, 
\[
\limsup_{V \to \infty} \frac{2}{A(V)} \left ( V - \frac{A(V)^{3/2}}{ 6\sqrt { \pi}} \right) \leq 0
\]
contradicting Theorem \ref{prop:huisk-iso-mass}. 
\end{proof}
\end {proposition}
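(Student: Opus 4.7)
My plan is to argue by contradiction: assume that no sequence as in alternative (a) of Lemma \ref{lem:decomposition} with sphere boundaries exists. Then for every threshold $V_0 > 0$, every isoperimetric region of volume $V \geq V_0$ either falls under alternative (b) or falls under alternative (a) but with a positive-genus outer boundary. By Proposition \ref{prop:nooffcentersphere}, for $V$ sufficiently large, alternative (b) with spherical outer boundary is ruled out. Hence there is $V_0 > 0$ such that, for every $V \geq V_0$, the unique large component of every isoperimetric region $\Omega_V$ is bounded by a positive-genus surface $\Sigma_V$ that lies in the end of $M$ — either because the regions exhaust $M$ in case (a), pushing their outer boundaries to infinity, or because the large component diverges in case (b). In either case, $\Sigma_V$ eventually avoids any fixed compact set.

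\textbf{Key step.} Having set up the contradiction scenario, I would invoke Proposition \ref{prop:Hawkinggenus} to obtain
\[
H_V^2 \, \area(\Sigma_V) \geq 16 \pi
\]
for all $V \geq V_0$ (after enlarging $V_0$ if necessary), where $H_V > 0$ is the constant mean curvature of $\Sigma_V$. Next I would use the standard fact that the isoperimetric profile $A$ is locally Lipschitz, hence absolutely continuous and differentiable almost everywhere, and that at every point $V$ of differentiability $A'(V) = H_V$ (the classical relation between the derivative of the isoperimetric profile and the mean curvature of a minimizer). Combining these gives
\[
16\pi - A'(V)^2 \, A(V) \leq 0
\]
for a.e.\ $V \geq V_0$.

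\textbf{Differentiating the isoperimetric mass.} I would then differentiate the absolutely continuous function
\[
f(W) = W - \frac{A(W)^{3/2}}{6 \sqrt{\pi}}
\]
at points of differentiability of $A$, obtaining
\[
f'(V) = 1 - \frac{A'(V)\,\sqrt{A(V)}}{4 \sqrt{\pi}} = \frac{1}{16 \pi} \cdot \frac{16\pi - A'(V)^2 A(V)}{1 + A'(V)\sqrt{A(V)}/\sqrt{16\pi}} \leq 0.
\]
By absolute continuity, $f$ is non-increasing on $[V_0, \infty)$, so $f(V) \leq f(V_0)$ is bounded above. Since $A(V) \to \infty$, this forces
\[
\limsup_{V \to \infty} \frac{2}{A(V)}\left( V - \frac{A(V)^{3/2}}{6 \sqrt{\pi}} \right) \leq 0,
\]
in direct contradiction to the identity $m_{ADM} = \lim_{V \to \infty} 2 A(V)^{-1}(V - A(V)^{3/2}/(6\sqrt{\pi}))$ from Appendix \ref{sec:sharpisoperimetric} (Theorem \ref{prop:huisk-iso-mass}), which is strictly positive by the positive mass hypothesis.

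\textbf{Anticipated obstacle.} The genuinely content-laden ingredient is the lower bound $H_V^2 \area(\Sigma_V) \geq 16\pi$ for positive-genus divergent stable CMC surfaces (Proposition \ref{prop:Hawkinggenus}), together with the ruling out of outlying spherical components (Proposition \ref{prop:nooffcentersphere}) — both already established. Once those are in hand, the remaining work is essentially a one-line ODE-type comparison on the isoperimetric profile; the subtle point to verify carefully is simply the regularity of $A$ (local Lipschitz continuity and the identification $A'(V) = H_V$ at differentiability points) so that the monotonicity argument for $f$ is rigorously justified.
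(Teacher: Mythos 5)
Your proposal is correct and follows essentially the same route as the paper: rule out outlying spheres via Proposition \ref{prop:nooffcentersphere}, apply the Willmore-type bound of Proposition \ref{prop:Hawkinggenus} to the resulting divergent positive-genus boundaries, and then integrate the resulting sign condition on the derivative of $W \mapsto W - A(W)^{3/2}/(6\sqrt{\pi})$ to contradict Theorem \ref{prop:huisk-iso-mass}. The additional care you take in justifying that the surfaces $\Sigma_V$ eventually avoid any compact set (so that Proposition \ref{prop:Hawkinggenus} applies) and in spelling out the absolute-continuity step matches what the paper leaves implicit.
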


\begin {proof} [Proof of Theorem \ref{thm:main} with stronger decay  \eqref{eqn:Masigma} and \eqref{eqn:MaR}]

We consider the canonical foliation 
\[
\{\Sigma^H : H \in (0, H_0)\}
\]
of $M \setminus C$ discussed in Appendix \label{sec:canonicalfoliation}. Let $\Omega^H$ be the compact  region bounded by $\partial M$ and $\Sigma^H$.  Using the above and the results from Appendix  \ref{sec:canonicalfoliation}, we see that the set 
\[
J = \{ \vol (\Omega^H)  : H \in (0, H_0) \text{ and } \Omega^H \text{ is an isoperimetric region}\}
\]
is closed in $(0, H_0)$ and unbounded. Assume that there are non-empty intervals $(L_k, R_k) \subset \R \setminus J$ such that $L_k, R_k \in J$ and $L_k \to \infty$. Choose a volume $V_k \in (L_k, R_k)$ such that $A'(V_k)$ exists. (Such volumes $V_k$ lie dense in the interval.) Let $\Omega_k$ be the (unique) large component of an isoperimetric region of volume $V_k$ and let $\Sigma_k = \partial \Omega_k \setminus \partial M$. Note that $\Sigma_k$ has positive genus. Let $H_k > 0$ be its mean curvature. We have that
\[
A'(V_k) = H_k \qquad \text{ and } \qquad 16 \pi - A'(V_k)^2 A(V_k) \leq 16 \pi - H_k^2 \text{area} (\Sigma_k) \leq 0.
\]
Observe that
\[
\sqrt {\frac{A(L_k)}{16 \pi}}\left( 1-  \frac{A'{}^+(L_k)^2 A(L_k)}{16 \pi} \right) \leq \lim_{V \searrow L_k} \sqrt {\frac{A(V)}{16 \pi}} \left( 1 - \frac{A'{}^+(V)^2 A(V)}{16 \pi} \right) \leq0
\]
because the isoperimetric profile is continuous and
\[
\lim_{V \searrow L_k} A'{}^+(V) \leq A'{}^+ (L_k).
\]
Let $H(L_k) \in (0, H_0)$ be such that $\vol (\Omega^{H(L_k)}) = L_k$.  Note that $\Sigma^{H(L_k)}$ has least area for the volume it encloses since $L_k \in J$. In particular,
\[
\area(\Sigma^{H(L_k)}) = A(L_k) \qquad \text{ and } \qquad A' {}^+ (L_k) \leq H(L_k) \leq A'{}^- (L_k)
\]
so that 
\[
\sqrt {\frac{\area (\Sigma^{H(L_k)})}{16 \pi}}\left( 1 - \frac {H(L_k)^2 \area (\Sigma^{H(L_k)})}{16 \pi} \right) \leq\sqrt {\frac{A(L_k)}{16 \pi}} \left( 1 - \frac{A'{}^+(L_k)^2 A(L_k)}{16 \pi} \right).
\]
We then have, using \eqref{eqn:ADMHawking}, 
\begin{align*}
0 &< m_{ADM} = \lim_{k \to \infty} \sqrt {\frac{\area (\Sigma^{H(L_k)})}{16 \pi}} \left( 1 - \frac {H(L_k)^2 \,  \area (\Sigma^{H(L_k)})}{16 \pi} \right) \\ &\leq \liminf_{k \to \infty}\sqrt {\frac{A(L_k)}{16 \pi}}  \left( 1 - \frac{A'{}^+(L_k)^2 A(L_k)}{16 \pi} \right)  \leq 0.
\end{align*}
This contradiction shows that $J$ is connected at infinity. In other words, all sufficiently far out leaves of the canonical foliation are isoperimetric for the volume they enclose. In particular, the isoperimetric profile is smooth for all sufficiently large volumes. \\

The argument above does not by itself show that a far-out leaf of the canonical foliation is \emph{uniquely} isoperimetric for the volume it encloses. However, it follows from the regularity of the profile that any two  isoperimetric regions with the same large volume have the same boundary area \emph{and} mean curvature. In particular, they have the same \emph{positive} Hawking mass. (The Hawking mass approaches $m_{ADM}$ as the volume becomes larger.) It also follows that if $\Omega_V$ is an isoperimetric region in $(M, g)$ of volume $V \geq V_0$ where $V_0 > 0$ is sufficiently large  whose boundary is not a leaf of the canonical foliation, then the boundary $\Sigma$ of the (unique) large component of $\Omega_V$ has \emph{positive} genus. It follows from Lemma \ref{lem:characterizationlargeiso} that $\Sigma$ is outlying. Proposition \ref{prop:Hawkinggenus} then implies that 
\[
\sqrt \frac{\area(\partial \Omega_V)}{16 \pi} \left( 16 \pi - H(V)^2 \area (\partial \Omega_V) \right) \leq \sqrt \frac{\area(\partial \Omega _V)}{16 \pi} \left( 16 \pi - H(V)^2 \area (\Sigma)\right) \leq 0, 
\]
which is a contradiction. 
\end {proof}


\section{Mean curvature flow of large isoperimetric regions} \label{sec:MCF}

Let $(M, g)$ be a complete Riemannian $3$-manifold that is asymptotically flat at rate $\tau > 1/2$ and which has non-negative scalar curvature and positive mass $m_{ADM} > 0$.

Let $\Omega_{V_k} $ be isoperimetric regions of volumes $V_k \to \infty$. Let $\Omega_k$ be the unique large component of $\Omega_{V_k}$. We recall from Section \ref{sec:divergentisos} that $\Omega_k$ is connected with connected outer boundary $\partial \Omega_k \setminus \partial M$, and that $\Omega_k$ is outer area-minimizing in $(M, g)$.

Let $\{ \Omega_{k}(t) \}_{t \geq 0}$ denote the level set flow with initial condition $\Omega_{k}$. Then $\Omega_{k}(t)$ is mean-convex in the sense of \cite [p. 670]{White:size-singular} by Theorem 3.1 in \cite{White:size-singular}. Moreover, by Theorem 5.1 in  \cite{White:size-singular}, the $n$-rectifiable Radon measures 
\begin{align} \label{eqn:mu}
 \mu_{k}(t) = \mathcal{H}^{2}\lfloor \partial^{*}\Omega_{k}(t)
\end{align}
define an integral Brakke flow $\{\mu_k (t)\}_{t \geq 0}$ in $(M, g)$. 

\begin{lemma}\label{lem:quad-area-bds-MCF-iso}
There is a constant $c > 0$ depending only on $(M, g)$ so that 
\[
\area (B_\rho \cap \partial^{*}\Omega_{k}(t)) \leq c  \rho^{2}
\]
for all $\rho\geq 1$ and $j \geq 1$ and $t \geq 0$. 
\end{lemma}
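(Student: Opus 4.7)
My plan is to reduce to the cut-and-paste argument of Lemma \ref{lem:quadraticgrowth} by showing that $\Omega_k(t)$ remains outward area-minimizing in the extension $(\hat M, \hat g)$ of Lemma \ref{lem:HIiso} for every $t \geq 0$. The initial region $\Omega_k$ has this property because it is (the large component of) an isoperimetric region in $(M, g)$, by Lemma \ref{lem:HIiso}. For $t > 0$, I would invoke the standard preservation of outward area-minimality under mean-convex level set flow starting from outward area-minimizing initial data; this is built into the elliptic regularization construction used in \cite{Huisken-Ilmanen:2001} and is discussed in the weak set flow / biggest mean-convex set perspective of \cite{White:size-singular}. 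Since the Brakke flow $\{\mu_k(t)\}$ is obtained from this level set flow via \eqref{eqn:mu}, the outward area-minimality transfers to each $\mu_k(t)$.

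Granted outward-minimality at every time, I would fix $\rho$ large enough that the coordinate ball $B_\rho$ contains $\partial M$ and use the comparison region $\Omega_k(t) \cup B_\rho \subset \hat M$ as a competitor to $\Omega_k(t)$ in $(\hat M, \hat g)$. Outward-minimality then yields
\[
\area(B_\rho \cap \partial^* \Omega_k(t)) \leq \area(\partial B_\rho \setminus \Omega_k(t)) \leq c \, \rho^2
\]
with $c$ depending only on $(M, g)$, where the last bound uses asymptotic flatness to estimate $\area(\partial B_\rho) = 4\pi \rho^2 (1 + o(1))$. For $1 \leq \rho \leq \rho_0$ the estimate holds with a possibly larger constant, after absorbing a finite quantity, so the asserted bound follows uniformly for all $\rho \geq 1$, $k \geq 1$, and $t \geq 0$.

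The hard part is the preservation of outward area-minimality along the weak flow. While it is standard for classical smooth mean-convex flow, its cleanest formulation for integral Brakke flows starting from outward-minimizing initial data (and in the presence of horizons) requires some care, which is why I would prefer to cite the elliptic regularization input from \cite{Huisken-Ilmanen:2001,White:size-singular} rather than prove it from scratch. An alternative that bypasses this input is to apply a Huisken-type localized monotonicity formula to $\{\mu_k(t)\}$ adapted to the asymptotically flat background; combined with the observation that the mean-convex flow $\Omega_k(t)$ has retreated from any fixed coordinate ball $B_\rho$ within time of order $\rho^2$, this would likewise propagate the quadratic area growth of Lemma \ref{lem:quadraticgrowth} for the initial data to all $t \geq 0$.
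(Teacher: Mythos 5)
Your proposal is correct and follows essentially the same route as the paper: the paper cites Theorem 3.5 of \cite{White:size-singular} to get that $\Omega_k(t)$ is outward area-minimizing within $\Omega_k$, combines this with the outward area-minimality of $\Omega_k$ itself (Lemma \ref{lem:HIiso}) to conclude outward area-minimality of $\Omega_k(t)$ in $(M,g)$, and then compares with coordinate spheres exactly as in your competitor argument with $\Omega_k(t)\cup B_\rho$. The "hard part" you flag is precisely what White's Theorem 3.5 supplies, so no further elliptic-regularization or monotonicity input is needed.
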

\begin{proof}
By \cite[Theorem 3.5]{White:size-singular}, $\Omega_{k}(t)$ is outward area-minimizing in $\Omega_{k}$. Combined with the fact that $\Omega_{k}$ is outward area-minimizing, we see that $\Omega_{k}(t)$ is outward area-minimizing in $(M,g)$. The claim follows from comparison with coordinate spheres. 
\end{proof}

We may view $\mu_k (t) \lfloor (M \setminus K)$ as a measure on $\{x \in \R^3 : |x| > 1/2\}$ using the chart at infinity \eqref{eqn:chartatinfinity}. In fact, consider the map 
\[
\eta_k : \R^3 \to \R^3 \qquad \text{ given by } \qquad x \mapsto x / \rho_k
\]
and the rescaled measures 
\begin{align} \label{eqn:tildemu}
\tilde \mu_k (t) = \eta_{k*} \left( \mu_k (\rho_k^2 \,  t) \lfloor (M \setminus K) \right)
\end{align}
on $\{x \in \R^3 : \rho_k |x| > 1/2\}$ where $\rho_{k} > 0$ is such that 
\[
\vol (\Omega_{k}) = \frac{4 \pi}{3}  \rho_{k}^{3}.
\]
Then $\{\tilde \mu_k (t)\}_{t \geq 0}$ is a Brakke motion on $\{x \in \R^3 : \rho_k |x| > 1/2\}$ with respect to the metric 
\[
\tilde g_k (x) = \sum_{i, j = 1}^3 g_{ij} (\rho_k x) dx^i \otimes dx^j
\]
on $\{x \in \R^3 : \rho_k |x| > 1/2\}$. As $k \to \infty$, $\tilde g_k$ converges to the standard Euclidean inner product in $C^\infty_{loc} (\R^3 \setminus \{0\})$. We let $\tilde \Omega_k$ be the subset of $\{x \in \R^3 : \rho_k \, |x| > 1/2\}$ such that 
\[
\Omega_k \setminus K \cong \{\rho_k  x : x \in \tilde \Omega_k\}.
\]
We also let $\tilde \Omega_k(t)$ be the subset of $\{x \in \R^3 : \rho_k |x| > 1/2\}$ such that 
\begin{align} \label{eqn:tildeomega}
\Omega_k(t) \setminus K \cong \{ \rho_k  x : x \in \tilde \Omega_k(t) \}.
\end {align} 
By the remarks following Lemma \ref{lem:decomposition}, there is $\xi \in \R^3$ such that, upon passing to a subsequence, 
\[
\tilde \Omega_k \to B_1(\xi) \qquad \text{ in }  \qquad C^\infty_{loc} (\R^3 \setminus \{0\})
\]
as $k \to \infty$. Our goal will be to show that $\xi = 0$.

\begin{proposition}
There is an integral Brakke flow $\{\mu (t)\}_{t\geq0}$ on $\R^{3}\setminus\{0\}$ with the following three properties. 
\begin{enumerate} [(1)]
\item There is a subsequence $\ell(k)$ of $k$ such that, for all $t \geq 0$,
\[
\tilde\mu_{\ell(k)}(t) \rightharpoonup \mu (t)
\]
as Radon measures on $\R^3 \setminus \{0\}$.
\item For almost every $t \geq 0$, there is a subsequence $\ell(k, t)$ of $\ell(k)$ such that 
\[
V_{\tilde \mu_{\ell(k, t)}(t)} \rightharpoonup V_{\mu(t)}
\]
as varifolds above $\R^{3}\setminus\{0\}$. 
\item There is a constant $c>0$ so that  
\[
\mu(t)(B_{\rho}(0)) \leq c \rho^2
\]
for all $\rho > 0$ and $t \geq 0$. 

\end {enumerate} 
\end{proposition}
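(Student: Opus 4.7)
My approach is to apply the standard compactness theorem for integral Brakke flows due to Ilmanen to the rescaled sequence $\{\tilde\mu_k(t)\}_{t\geq 0}$, viewed as integral Brakke flows on the exhausting open sets $U_k = \{x \in \R^3 : \rho_k|x| > 1/2\}$ with respect to the metrics $\tilde g_k$, which converge to the Euclidean metric in $C^\infty_{loc}(\R^3 \setminus \{0\})$.

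The first step is to rescale the quadratic mass bound from Lemma \ref{lem:quad-area-bds-MCF-iso}. Since $\eta_k$ contracts lengths by $\rho_k^{-1}$ and hence $2$-dimensional Hausdorff measure by $\rho_k^{-2}$, the ball $B_\rho(0)$ in the target corresponds to the ball $B_{\rho_k \rho}$ in $M$. Lemma \ref{lem:quad-area-bds-MCF-iso} then gives
\[
\tilde\mu_k(t)(B_\rho(0)) \leq c\rho^2
\]
for every $\rho > 0$, every $t \geq 0$, and every $k$ with $\rho_k \rho > 1$. This bound is uniform in $k$ and $t$, and passes to any weak-$*$ limit by lower semicontinuity, which immediately yields assertion (3).

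With locally uniform mass bounds in hand, I would invoke Ilmanen's compactness theorem for integral Brakke flows in its mild extension to ambient metrics varying in $C^\infty_{loc}$; the proof requires no substantive change, since first variation, generalized mean curvature, and curvature terms entering the Brakke inequality depend continuously in $C^2$ on the background. A diagonal extraction over a countable dense set of times $\{t_j\}_{j \geq 1} \subset [0, \infty)$ produces a subsequence $\ell(k)$ along which $\tilde\mu_{\ell(k)}(t_j) \rightharpoonup \mu(t_j)$ for every $j$; the one-sided mass control between sample times provided by the Brakke inequality then upgrades this to weak-$*$ Radon measure convergence at every $t \geq 0$, establishing (1). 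The limit $\{\mu(t)\}_{t \geq 0}$ is itself an integral Brakke flow on $\R^3 \setminus \{0\}$ with respect to the Euclidean metric, and no mass leaks into the puncture at the origin because (3) forces $\mu(t)(B_\rho(0)) \to 0$ as $\rho \to 0$.

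For assertion (2), fix a compact set $K \subset \R^3 \setminus \{0\}$. The function $t \mapsto \mu(t)(K)$ is of locally bounded variation, hence continuous off a countable exceptional set $E$. For any $t \notin E$, the Brakke inequality delivers $L^2$-control on the generalized mean curvature of $V_{\tilde\mu_{\ell(k,t)}(t)}$ along a further subsequence $\ell(k,t)$ selected via Fubini; Allard's compactness theorem for integral varifolds then allows me to pass to a subsequence along which $V_{\tilde\mu_{\ell(k,t)}(t)}$ converges as integral varifolds, necessarily to $V_{\mu(t)}$ since the underlying weight measures already agree by (1). The principal obstacle is the careful treatment of Ilmanen's compactness in the presence of both varying metrics and the excised origin, but both issues are dispatched by the uniform quadratic mass bound: it confines the relevant analysis to compact subsets of $\R^3 \setminus \{0\}$ where $\tilde g_k$ converges smoothly to Euclidean, and it rules out any concentration of mass at $0$ in the limit.
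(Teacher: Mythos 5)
Your proposal is correct and follows essentially the same route as the paper: the paper's proof simply invokes Ilmanen's compactness theorem for integral Brakke flows (Theorem 7.1 of \cite{Ilmanen:ellipticreg}), remarks that the argument carries over verbatim to metrics converging in $C^\infty_{loc}$, and obtains the quadratic mass bounds by rescaling Lemma \ref{lem:quad-area-bds-MCF-iso}. The additional detail you supply (diagonal extraction over a dense set of times, the BV/one-sided mass control upgrade to all times, and the Allard-compactness step for almost every $t$) is exactly the internal machinery of Ilmanen's proof, so nothing in your argument departs from the paper's.
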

\begin{proof}
The first two claims follow from T. Ilmanen's compactness theorem for integral Brakke flows, Theorem 7.1 in  \cite{Ilmanen:ellipticreg}. This result is only stated for sequences of Brakke flows with respect to a fixed complete Riemannian metric in \cite{Ilmanen:ellipticreg}. However, the same proof as in \cite{Ilmanen:ellipticreg} applies in the present setting. The quadratic area bounds carry over from Lemma \ref{lem:quad-area-bds-MCF-iso}.
\end{proof}

In view of Proposition \ref{prop:extend-brakke-pt}, it is clear now that $\{\mu(t)\}_{t\geq0}$ extends to an integral Brakke flow in $\R^{3}$ with initial condition 
\[
\mu(0) = \mathcal{H}^{2}\lfloor S_{1}(\xi).
\]
Proposition \ref{prop:standardlimitflow} shows that such a Brakke motion follows classical mean curvature flow -- except possibly for sudden extinction:
\[
\mu(t) = \cH^2\lfloor S_{\sqrt {1 - 4 t}} (\xi)
\] 
for all $t \in [0, T]$ where $T \in [0, 1/4)$. The particular flow at hand is constructed as the limit of level set flows. We use spherical barriers to show that the limiting flow cannot disappear suddenly, i.e. that $T = 1/4$.

\begin{lemma} 
We have that $\mu(t) = \mathcal{H}^{2}\lfloor S_{\sqrt{1-4t}}(\xi)$ for all $t \in [0,1/4)$. 

\begin{proof} 
If not, then there is $T \in [0, 1/4)$ so that $\mu(t) = \mathcal{H}^{2}\lfloor S_{\sqrt{1-4t}}(\xi)$ for $t \in [0,T]$ and $\mu(t) = 0$ for $t> T$. We will prove the result for $|a| \geq 1$ and leave the straightfoward modification to $|a| < 1$ to the reader. 

Assume that $T = 0$. Let $\varepsilon >0$ be small. Upper semi-continuity of density (cf.\  \cite[Corollary 17.8]{Simon:GMT}) implies that $B_{\sqrt {1 - 4 \varepsilon}} (\xi) \subset \tilde \Omega_k(0) = \tilde \Omega_k$ for all sufficiently large $k$. Using that $\tilde g_k$ converges to the standard Euclidean inner product in $C^2_{loc} (\R^3 \setminus \{0\})$ and the avoidance principle for the level set flow, we see that $B_{\sqrt {1 - 9 \varepsilon}} (\xi) \subset \tilde \Omega_k(\varepsilon)$ provided that $k$ is sufficiently large. Recall that $\tilde \mu_k(t) = \cH^2 \lfloor \partial^* \tilde \Omega_k(t)$. We obtain a contradiction with the assumption that $\tilde \mu_k(\varepsilon) \rightharpoonup \tilde \mu(\varepsilon) = 0$.

Assume now that $T \in (0, 1/4)$. Let $0 < \varepsilon < (1 - 4 T)/100$. Upper semi-continuity of Gaussian density (cf.\ \cite{Ilmanen:singularities}) implies that $B_{\sqrt {1 - 4 T - 4 \varepsilon}} (\xi) \subset \tilde \Omega_k(T)$ for all $k$ sufficiently large. Arguing as in the previous case, we see that $B_{\sqrt {1 - 4 T - 9 \varepsilon}} (\xi) \subset \tilde \Omega_k(T+\varepsilon)$. This is a contradiction for the same reason as before. 
\end{proof}
\end{lemma}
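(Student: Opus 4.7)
I would argue by contradiction, using shrinking spheres of Euclidean mean curvature flow as barriers. Suppose there exists $T \in [0, 1/4)$ such that $\mu(t) = \cH^2 \lfloor S_{\sqrt{1-4t}}(\xi)$ for $0 \le t \le T$ while $\mu(t) \equiv 0$ for $t > T$. Fix a small $\varepsilon > 0$ with $1 - 4T - 100 \varepsilon > 0$. The target is to show that the rescaled regions $\tilde \Omega_k(T + \varepsilon)$ still enclose a Euclidean ball of a definite radius independent of $k$; this will contradict the weak convergence $\tilde \mu_k(T + \varepsilon) \rightharpoonup 0$.

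The first and most delicate step is to establish that $\overline{B_{\sqrt{1 - 4T} - \varepsilon}(\xi)} \subset \tilde \Omega_k(T)$ for all sufficiently large $k$. If some point $y$ strictly inside the limit sphere were missing from $\tilde \Omega_k(T)$ along a subsequence, then by outer regularity a small ball around $y$ would be disjoint from $\tilde \Omega_k(T)$. Combined with the outward area-minimizing property of $\tilde \Omega_k(t)$ and the upper semi-continuity of Gaussian density for integral Brakke flows (Ilmanen's monotonicity formula), this would force $\tilde \mu_k(T)$ to carry nontrivial mass strictly inside $S_{\sqrt{1-4T}}(\xi)$, violating the weak convergence $\tilde \mu_k(T) \rightharpoonup \cH^2 \lfloor S_{\sqrt{1-4T}}(\xi)$. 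The case $T = 0$ is more elementary, since the $C^\infty_{loc}(\R^3 \setminus \{0\})$ convergence $\tilde \Omega_k \to B_1(\xi)$ recorded in Section \ref{sec:divergentisos} gives the inclusion directly.

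Having this interior inclusion at time $T$, I would propagate it forward by the avoidance principle for the level set flow. Since $\tilde g_k$ converges to the Euclidean inner product in $C^\infty_{loc}(\R^3 \setminus \{0\})$, the geometry near the target shrinking sphere is uniformly Euclidean up to an $O(\rho_k^{-\tau})$ perturbation; comparing with the explicit Euclidean shrinking-sphere solution $\{S_{\sqrt{1-4t}}(\xi)\}$ and absorbing the metric error into the radius yields $B_{\sqrt{1 - 4(T + \varepsilon)} - 2 \varepsilon}(\xi) \subset \tilde \Omega_k(T + \varepsilon)$ for all sufficiently large $k$. Since $\tilde \Omega_k(T + \varepsilon)$ is outward area-minimizing in $\tilde g_k$ and encloses a ball of definite Euclidean radius, its boundary has $\tilde g_k$-area bounded below by a positive constant independent of $k$, so $\tilde \mu_k(T + \varepsilon)(\R^3)$ cannot vanish in the limit -- a contradiction. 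The main obstacle I anticipate is the first step: translating the Brakke-type weak convergence of surface measures into genuine $L^1_{loc}$ inclusion of the enclosed regions so that barriers can actually be placed strictly inside them. I expect the quadratic area bounds of Lemma \ref{lem:quad-area-bds-MCF-iso}, together with the outward area-minimizing property and the absence of interior nucleation inherited from isoperimetric minimality, to close this gap; once it is in place, the remainder of the argument is essentially the standard Euclidean MCF barrier comparison.
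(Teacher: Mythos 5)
Your proposal is correct and follows essentially the same route as the paper: argue by contradiction on the extinction time $T$, use upper semi-continuity of (Gaussian) density to place a Euclidean ball of definite radius inside $\tilde\Omega_k(T)$ for large $k$ (with the case $T=0$ handled more directly via the smooth convergence $\tilde\Omega_k \to B_1(\xi)$), propagate the inclusion to time $T+\varepsilon$ by the avoidance principle against nearly-Euclidean shrinking-sphere barriers, and contradict $\tilde\mu_k(T+\varepsilon)\rightharpoonup 0$. The only (cosmetic) difference is that you phrase the obstruction as mass appearing in $\tilde\mu_k(T)$ strictly inside the sphere, whereas the mass really appears at the crossing time $s\le T$; the contradiction with $\tilde\mu_k(s)\rightharpoonup \mathcal{H}^2\lfloor S_{\sqrt{1-4s}}(\xi)$ is the same.
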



Using B. White's version \cite{White:regularityMCF} of K. Brakke's regularity theorem \cite{Brakke} for mean curvature flow, we obtain the following 

\begin{corollary} \label{coro:limit-rescaled-MCF}
Let $(x, t) \in (\R^3 \setminus \{0\}) \times [0, \infty)$ with $(x, t) \neq (\xi, 1/4)$. There is a neighorhood of $(x, t)$ in $\R^3 \times \R$ where $ \{ \tilde \Omega_k (t) \}_{t \geq 0}$ defines a classical mean curvature flow with respect to the Riemannian metric $\tilde g_k$ provided that $k$ is sufficiently large. These flows converge to the shrinking sphere $S_{\sqrt{1-4t}}(\xi)$ as $k \to \infty$ locally smoothly away from the spacetime set $\{0\}\times [0,\infty) \cup \{(\xi,\frac{1}{4})\}$.
\end{corollary}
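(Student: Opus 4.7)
The plan is to combine White's version of Brakke's regularity theorem (applied at points where the limit Brakke flow $\mu$ identified in the preceding lemma has Gaussian density one) with the avoidance principle and spherical barriers at points off the limit sphere, exactly as in the proof of that lemma. The ambient input driving both arguments is the convergence of $\tilde g_k$ to the Euclidean metric in $C^\infty_{loc}(\R^3 \setminus \{0\})$, which makes $\tilde g_k$-mean curvature flow a $C^\infty$-small perturbation of Euclidean mean curvature flow on compact subsets of $\R^3 \setminus \{0\}$.

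For the regular points, I would fix $(x_0, t_0)$ with $x_0 \in S_{\sqrt{1-4t_0}}(\xi)$, $t_0 < 1/4$, and $x_0 \neq 0$. The Gaussian density of $\mu$ at $(x_0, t_0)$ equals one because $\supp \mu(t_0)$ is a smooth embedded sphere through $x_0$. Upper semi-continuity of Gaussian density under Brakke-flow convergence then forces the densities of $\tilde \mu_k$ at $(x_0, t_0)$ to be at most $1 + o(1)$ as $k \to \infty$. White's regularity theorem \cite{White:regularityMCF} then produces a spacetime parabolic neighborhood $U$ of $(x_0, t_0)$ on which, for all $k$ large, $\tilde \mu_k \lfloor U$ is a smooth classical $\tilde g_k$-mean curvature flow with uniform $C^\infty$ estimates. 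Combined with the weak convergence $\tilde \mu_k(t) \rightharpoonup \mu(t)$ already established, Arzel\`a--Ascoli upgrades this to smooth convergence of these flows to the shrinking sphere on $U$; uniqueness of the limit promotes the subsequential convergence to full-sequence convergence, and a standard covering argument gives locally smooth convergence on the full complement of $\{0\} \times [0, \infty) \cup \{(\xi, 1/4)\}$.

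For a point $(x_0, t_0)$ off the moving sphere (with $x_0 \neq 0$ and $(x_0, t_0) \neq (\xi, 1/4)$), I would show that $\partial^* \tilde \Omega_k(t)$ avoids a spacetime neighborhood of $(x_0, t_0)$ for all $k$ large, so the classical mean curvature flow assertion is vacuous there. In the interior case $|x_0 - \xi| < \sqrt{1 - 4 t_0}$, place a small Euclidean ball inside $B_{\sqrt{1 - 4 t_0}}(\xi)$ containing $x_0$; its Euclidean shrinking-sphere evolution is a $\tilde g_k$-MCF barrier up to a negligible perturbation, and the avoidance principle for the level set flow $\tilde \Omega_k(t)$ keeps $x_0$ in the interior throughout a neighborhood of $(x_0, t_0)$. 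The exterior case (either $|x_0 - \xi| > \sqrt{1 - 4 t_0}$ for $t_0 < 1/4$, or $t_0 \geq 1/4$ and $x_0 \neq \xi$) is symmetric, using an outer spherical barrier exactly as in the extinction argument of the preceding lemma. The main obstacle is the uniform density-one step: since the metrics $\tilde g_k$ vary and carry curvature of order $\rho_k^{-2}$, one must use a version of Huisken's monotonicity formula with controlled error terms valid on $C^\infty_{loc}$-convergent Riemannian backgrounds. The error vanishes in the limit, and White's regularity theorem then applies uniformly in $k$ for $k$ large.
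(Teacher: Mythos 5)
Your proposal is correct and follows essentially the same route as the paper, which derives the corollary directly from White's regularity theorem applied to the multiplicity-one smooth limit flow $\mathcal{H}^2\lfloor S_{\sqrt{1-4t}}(\xi)$ established in the preceding lemma; your elaboration of the Gaussian-density-one step, the adaptation to the $C^\infty_{loc}$-convergent metrics $\tilde g_k$, and the barrier/avoidance argument at points off the moving sphere is the standard way to fill in the details the paper leaves implicit.
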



We define the \emph{disconnecting time} for the rescaled flow by 
\[
\tilde T(|\xi|) =
\begin{cases}
 \frac{1-|\xi|^{2}} {4} & |\xi| < 1\\
0 & |\xi| \geq 1.
\end{cases}
\]
Note that the bulk of $\Omega_{k}(t)$ is disjoint from the center of $(M, g)$ after time $t = \rho_k^2 \,  \tilde T(|\xi|) (1 + o(1))$. 

Assume now that $\xi \neq 0$. Choose $\varepsilon > 0$ such that 
\begin{align} \label{eqn:choiceepsilon}
100 \varepsilon <    1 -  4\tilde T(|\xi|).
 \end{align}
We can make this choice such that  
\[
t_{k} = \rho_{k}^{2}(\varepsilon + \tilde T(|\xi|))
\]
and
\[
T_{k} = \rho_{k}^{2}(1/4-\varepsilon).
\]  
are smooth times for all the level set flows $\{\Omega_k(t)\}_{t \geq 0}$. Indeed, by the work of B. White  \cite{White:size-singular}, almost every time is a smooth time for the individual flows. For every $t \in [t_{k},T_{k}]$ there is a unique large component $\Gamma_k(t)$ of $\Omega_{k}(t)$ by Corollary \ref{coro:limit-rescaled-MCF}. The boundary $\Sigma_k(t)$ of $\Gamma_k(t)$ is smooth and close to a Euclidean sphere with radius $(\rho_k^2 - 4t)^{1/2}$ and center $\rho_k \xi$ in the chart at infinity \eqref{eqn:chartatinfinity}. Moreover, as $k \to \infty$,
\begin{align*}
\area( (\partial^{*}\Omega_{k}(t))\setminus \Sigma_{k}(t)) & = o(\rho_{k}^{2})\\
\vol(\Omega_{k}(t)\setminus\Gamma_{k}(t)) & = o(\rho_{k}^{3}).
\end{align*}

Recall that the Hawking mass of a closed, two-sided surface $\Sigma \subset M$ is defined as
\[
m_{H}(\Sigma) = \sqrt{\frac{\area(\Sigma)}{16\pi}} \left( 1 - \frac{1}{16\pi} \int_{\Sigma} H^{2}d\mu\right).
\]

Let
\[
m_k =  \sup_{t\in[t_{k},T_{k}]} m_{H}(\Sigma_{k}(t)).
\]

\begin{corollary}\label{coro:disconn-Hawk-mass}
We have that
\[
\lim_{k \to \infty} m_k = 0.
\]
\begin{proof}
The surface  $\Sigma_{k}(t)$ is geometrically close to the coordinate sphere $S_{\sqrt {\rho_k^2 - 4 t}} (\rho_k a)$ in the chart at infinity \eqref{eqn:chartatinfinity} by Corollary \ref{coro:limit-rescaled-MCF}. The assertion follows from Appendix \ref{sec:Hawkingoutlying}.
\end{proof}
\end{corollary}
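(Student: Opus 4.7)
The plan is to reduce the problem to estimating the Hawking mass of coordinate spheres centered far from the origin in the chart at infinity, via the smooth convergence established in Corollary \ref{coro:limit-rescaled-MCF}, and then to invoke the outlying Hawking mass estimates from Appendix \ref{sec:Hawkingoutlying}.

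First I would observe that the constraint \eqref{eqn:choiceepsilon} makes the rescaled time interval $[\varepsilon + \tilde{T}(|\xi|),\, 1/4 - \varepsilon]$ a compact subinterval of $(0, 1/4)$, separated from both the initial time and the singular spacetime point $(\xi, 1/4)$. On this interval, the locally smooth convergence provided by Corollary \ref{coro:limit-rescaled-MCF} of the rescaled boundaries $\partial \tilde\Omega_k(s)$ to the shrinking Euclidean spheres $S_{\sqrt{1-4s}}(\xi)$ on $\R^3 \setminus \{0\}$ can be promoted to uniform $C^{2}$ convergence in $s$. Unwinding the rescaling by $\rho_k$, the surface $\Sigma_k(t)$ is then a $C^{2}$-small perturbation, in the chart at infinity, of the Euclidean coordinate sphere $S_{r_k(t)}(\rho_k \xi)$ with $r_k(t) = \sqrt{\rho_k^2 - 4t}$, with perturbation size (measured at scale $\rho_k$) tending to zero as $k \to \infty$, uniformly in $t \in [t_k, T_k]$.

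Next I would use this geometric closeness, combined with the $\tau > 1/2$ decay of $g - \delta$ in the chart, to compare the quantities entering the Hawking mass:
\[
\area(\Sigma_k(t)) = \area(S_{r_k(t)}(\rho_k \xi))(1 + o(1)) \quad \text{and} \quad \int_{\Sigma_k(t)} H^{2}\, d\mu = \int_{S_{r_k(t)}(\rho_k \xi)} H^{2}\, d\mu + o(1),
\]
whence $m_H(\Sigma_k(t)) = m_H(S_{r_k(t)}(\rho_k \xi)) + o(1)$ uniformly in $t \in [t_k, T_k]$. The reference coordinate spheres are genuinely outlying: since $\xi \neq 0$, and since \eqref{eqn:choiceepsilon} keeps $r_k(t)$ strictly less than $\rho_k |\xi|$ in the case $|\xi| \leq 1$ (while for $|\xi| > 1$ the center itself lies well outside the sphere of radius $\rho_k$), the triangle inequality produces a uniform lower bound $d(0,\, S_{r_k(t)}(\rho_k \xi)) \geq c(\xi, \varepsilon)\, \rho_k$ in the chart. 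This places the reference spheres precisely in the regime of Appendix \ref{sec:Hawkingoutlying}, which gives $m_H(S_{r_k(t)}(\rho_k \xi)) \to 0$ uniformly.

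The main obstacle is extracting the uniformity in $t$: Corollary \ref{coro:limit-rescaled-MCF} a priori only supplies $C^{\infty}_{loc}$ convergence away from the singular spacetime set, whereas $m_k$ is a supremum over the full interval $[t_k, T_k]$. The compactness of the rescaled time interval and its positive separation from the endpoints of the limiting Euclidean flow are what promote the convergence to be uniform in $s$, which in turn is what makes both the reduction to the reference coordinate sphere and the outlying Hawking mass estimate applicable uniformly in $t$. A minor subtlety is that $r_k(t)$ and $|\rho_k \xi|$ both scale like $\rho_k$, so the estimates from Appendix \ref{sec:Hawkingoutlying} must be applied at the definite (diverging) scale dictated by $\rho_k$ rather than at a scale normalized away, but that appendix is tailored to exactly this situation.
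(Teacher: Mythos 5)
Your overall strategy (promote the locally smooth convergence of Corollary \ref{coro:limit-rescaled-MCF} to uniform $C^2$-closeness, after rescaling, to outlying coordinate spheres on the compact time interval $[\varepsilon+\tilde T(|\xi|),\,1/4-\varepsilon]$, verify that these spheres satisfy $|a|>(1+\delta)r$, and then invoke Appendix \ref{sec:Hawkingoutlying}) matches the paper's intent, and your observations about uniformity in $t$ and about why \eqref{eqn:choiceepsilon} keeps the reference spheres outlying are correct. However, there is a genuine gap in your intermediate reduction. You claim that $\area(\Sigma_k(t))=\area(S_{r_k(t)}(\rho_k\xi))(1+o(1))$ and $\int_{\Sigma_k(t)}H^2\,d\mu=\int_{S_{r_k(t)}(\rho_k\xi)}H^2\,d\mu+o(1)$ together imply $m_H(\Sigma_k(t))=m_H(S_{r_k(t)}(\rho_k\xi))+o(1)$. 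They do not: since $m_H(\Sigma)=\sqrt{\area(\Sigma)/16\pi}\,\bigl(1-\tfrac{1}{16\pi}\int_\Sigma H^2\,d\mu\bigr)$ and the area radius here is comparable to $\rho_k\to\infty$, an $o(1)$ error in the Willmore energy is amplified to an error of size $o(1)\cdot\rho_k$ in the Hawking mass, which is not controlled. To conclude $m_H(\Sigma_k(t))=o(1)$ one needs $16\pi-\int_{\Sigma_k(t)}H^2\,d\mu\le o(r_k(t)^{-1})$, and this cannot be extracted from $C^2$-closeness (at scale $\rho_k$, with perturbation size merely $o(1)$) by a soft comparison of Willmore energies --- the Willmore energy is scale-invariant, so geometric closeness only yields $\int H^2=16\pi+o(1)$, which is exactly one order short.

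The way to close the gap is to apply Appendix \ref{sec:Hawkingoutlying} \emph{directly to the surfaces} $\Sigma_k(t)$ rather than to the reference coordinate spheres: the hypothesis of that appendix is precisely that $\Sigma$ is a closed surface whose rescaling $\rho^{-1}\Sigma$ is $C^2$-close to a unit sphere in $\{|x|>\delta\}$ with $|a|>(1+\delta)\rho$, which is exactly what Corollary \ref{coro:limit-rescaled-MCF} provides for $\Sigma_k(t)$, uniformly in $t\in[t_k,T_k]$. The delicate content --- obtaining $16\pi-\int_\Sigma H^2\,d\mu\le o(r^{-1})$ despite the weak decay $\tau>1/2$ --- is carried out there via the integration by parts that isolates the ``mass integral'' (which is $o(r^{-1})$ for outlying, divergent surfaces because the scalar curvature is integrable), the exact Gauss--Bonnet identity $16\pi-\int_\Sigma\overline H^2\,d\overline\mu=-2\int_\Sigma|\mathring{\overline h}|^2\,d\overline\mu$, and the De~Lellis--M\"uller estimate \eqref{eq:DM-H-vs-tfreeh} to absorb the remaining error terms. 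None of this is replaceable by the comparison step you propose.
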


We denote by 
\[
A_m : (0, \infty) \to (0, \infty)
\]
the isoperimetric profile of Schwarzschild with mass $m > 0$. Thus, given $V > 0$, 
\[
A_m(V) = \left(1 + \frac{m}{2 r}\right)^4 4 \pi r^2
\]
where $r = r(V) > m/2$ is such that
\[
V = 4 \pi \int_{\frac{m}{2}}^r \left(1 + \frac{m}{2 r}\right)^6  r^2 d r
\]
We denote by
\[
V_m : (0, \infty) \to (0, \infty)
\]
the inverse of this function. We recall the following expansion obtained from a straightforward computation in view of H. Bray's characterization of isoperimetric surfaces in Schwarzschild  as centered coordinate spheres \cite[Theorem 8]{Bray:1997}. The claim that the error term is uniformly bounded is proven in Lemma 10 of \cite{Jauregui-Lee:2016}.
\begin{lemma}
We have that 
\[
V_{m}(A) = \frac{1}{6\sqrt{\pi}} A^{\frac{3}{2}} + \frac{m}{2} A + O(A^{\frac{1}{2}})
\]
as $A \to \infty$. The error  is uniform with respect to the parameter $m$ from a given range $0 < m \leq m_{0}$.
\end{lemma}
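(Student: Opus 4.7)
The plan is a direct asymptotic computation based on H. Bray's characterization \cite[Theorem 8]{Bray:1997} of isoperimetric surfaces in Schwarzschild as centered coordinate spheres $\{|x|=r\}$, $r>m/2$, in the chart where the metric equals $(1+m/(2|x|))^4\delta$. Under this characterization, the isoperimetric profile admits the parametric description
\[
A=4\pi r^2\bigl(1+m/(2r)\bigr)^4,\qquad V=4\pi\int_{m/2}^{r}\bigl(1+m/(2s)\bigr)^6 s^2\,ds,
\]
and the task reduces to expanding $V$ in powers of the auxiliary variable $R:=\sqrt{A/(4\pi)}$.

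First I would invert the area relation. Expanding the square gives $R=r+m+m^2/(4r)$, which is solved perturbatively by one iteration of the resulting fixed-point equation to yield $r=R-m-m^2/(4R)+O(m^3/R^2)$ for $R$ large compared to $m$. Squaring and cubing then produce
\[
r^3=R^3-3mR^2+\tfrac{9}{4}m^2 R+O(1),\qquad r^2=R^2-2mR+O(1),
\]
with constants depending only on $m_0$ once $R\ge 2m_0$.

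Next I would expand the integrand via the binomial theorem as $\sum_{k=0}^{6}\binom{6}{k}(m/2)^k s^{2-k}$ and integrate term-by-term from $m/2$ to $r$. The $k=0,1,2$ terms contribute the polynomial part $r^3/3+3mr^2/2+15m^2 r/4$; the $k=3$ term contributes $O(m^3\log r)$; the $k\geq 4$ terms, together with the lower-endpoint evaluations, contribute $O(1)$ uniformly in $m\in(0,m_0]$. This gives
\[
V=\tfrac{4\pi}{3}r^3+6\pi m r^2+15\pi m^2 r+O(A^{1/2}).
\]

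Substituting the expansions from the first step and using $R^3=A^{3/2}/(8\pi^{3/2})$, $R^2=A/(4\pi)$, and $R=A^{1/2}/(2\sqrt\pi)$, the $A^{3/2}$-coefficient comes out to $1/(6\sqrt\pi)$. The step I expect to require the most care is the cancellation at order $A$: the contribution $-mA$ coming from the $-3mR^2$ term inside $(4\pi/3)r^3$ combines with $+\tfrac{3}{2}mA$ coming from $6\pi mr^2\sim 6\pi m\cdot A/(4\pi)$ to produce exactly $mA/2$, so one must verify that no other order-$A$ contribution is hiding in the substitution. All $m^2 A^{1/2}$ contributions (arising from three distinct sources), together with the logarithmic remainder and the constant endpoint terms, are absorbed into $O(A^{1/2})$ with implied constant a polynomial in $m_0$; the full uniformity statement for $m\in(0,m_0]$ is the content of \cite[Lemma 10]{Jauregui-Lee:2016}.
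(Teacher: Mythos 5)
Your computation is correct and is exactly the ``straightforward computation'' the paper alludes to without writing out: the paper gives no proof beyond citing Bray's characterization and deferring uniformity to Lemma 10 of \cite{Jauregui-Lee:2016}, and your parametric expansion in $r$, the inversion $r=R-m-m^2/(4R)+O(m^3/R^2)$ with $R=\sqrt{A/(4\pi)}$, and the cancellation $-mA+\tfrac32 mA=\tfrac{m}{2}A$ all check out. The only minor redundancy is that you both establish uniformity in $m\in(0,m_0]$ directly (correctly, since every implied constant is polynomial in $m_0$ once $R\ge 2m_0$) and then cite \cite{Jauregui-Lee:2016} for it; either suffices.
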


G. Huisken has shown \cite{Huisken:iso-mass,Huisken:MM-iso-mass-video} that the quantity  
\begin{align}\label{eq:Huisken-iso-mon}
t \mapsto - \vol(\Omega_{t}) + V_{m}(\area (\Sigma_{t})) 
\end{align}
is non-increasing along a classical mean curvature flow of boundaries 
\[
\{ \Sigma_{t} = \partial\Omega_{t} \} _{t \in (a, b)}
\]
provided that $m \geq m_{H}(\Sigma_{t})$ and $|\Sigma_{t}| > 16\pi m^{2}$ for all $t \in (a, b)$. 

J. Jauregui and D. Lee \cite{Jauregui-Lee:2016} have introduced a modification of the level set flow starting from a mean convex region along which G. Huisken's monotonicity holds. Their result applies beautifully to our setting. 

For $\Omega$ the (unique) large component of a large isoperimetric region in $(M,g)$, we consider the  \emph{modified level set flow} $\{ \hat \Omega(t)\}_{t \geq 0}$ with $\hat \Omega (0) = \Omega$ defined by J. Jauregui and D. Lee in Definitions 24 and 27 of \cite{Jauregui-Lee:2016}. The modified flow agrees with the original level set flow $\{\Omega(t)\}_{t \geq 0}$ except that components of the original flow are frozen when their perimeter drops below $36\pi (m_{ADM})^{2}$. J. Jauregui and D. Lee have shown in Proposition 30 of \cite{Jauregui-Lee:2016} that G. Huisken's monotonicity holds along their modified level set flow.  In the statement of their result below, $T \geq 0$ as in Lemma 29 of \cite{Jauregui-Lee:2016} is the time when the flow has frozen up completely. 

\begin{proposition} [\cite{Jauregui-Lee:2016}] \label{prop:weak-Huisken-mon} 
The quantity 
\[
t \mapsto   - \vol ( \hat\Omega(t)) + V_{m_{ADM}}( \area ( \partial^{*}\hat\Omega(t) ))
\]
is non-increasing on $[0, T]$.
\end{proposition}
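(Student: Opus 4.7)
The plan is to derive this from G. Huisken's classical monotonicity along smooth mean curvature flow, extended to the weak modified flow of \cite{Jauregui-Lee:2016}. The argument proceeds in three stages.

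First, at times where the flow of each non-frozen component $C \subset \hat\Omega(t)$ is classical, Theorem 3.5 of \cite{White:size-singular} implies that $\hat\Omega(t)$ is outward area-minimizing in $(\hat M, \hat g)$, so Lemma \ref{lem:HIiso} gives $m_H(\partial C) \leq m_{ADM}$. Because the freezing threshold $36 \pi m_{ADM}^{2}$ strictly exceeds the Schwarzschild horizon area $16 \pi m_{ADM}^{2}$ required for $V_{m_{ADM}}$ to be smooth, one computes along classical MCF that
\[
\frac{d}{dt}\bigl[-\vol(C) + V_{m_{ADM}}(\area(\partial C))\bigr] = \int_{\partial C} H\, d\mu - V_{m_{ADM}}'(\area(\partial C)) \int_{\partial C} H^{2}\, d\mu.
\]
Using Cauchy-Schwarz on the first term, the identity $V_{m}'(A) = 1/H_{\mathrm{Schw}}(A)$ where $H_{\mathrm{Schw}}(A)$ is the mean curvature of the centered Schwarzschild sphere of area $A$, and the Hawking bound $m_{H}(\partial C)\leq m_{ADM}$, the right-hand side is $\leq 0$, with equality precisely on centered Schwarzschild spheres of mass $m_{ADM}$. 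Since volume and $V_{m_{ADM}} \circ \area$ are additive over disjoint components, summing reproduces the inequality for the whole $\hat \Omega(t)$.

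Second, to pass through singular times of the weak flow, one uses that classical smooth times are dense in $[0, T]$ (by Brakke-type regularity and White's result that almost every time is a regular time), together with approximation of the level set flow by classical smooth flows via Ilmanen's elliptic regularization. Continuity of volume, lower semicontinuity of perimeter under the varifold convergence produced by the approximation, and the continuity and monotonicity of $V_{m_{ADM}}$ upgrade the pointwise-smooth inequality to monotonicity of the integrated quantity on the whole interval.

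Third, at the freezing instant of a component $C$, by construction $\vol(C)$ and $\area(\partial C)$ become constant thereafter, so $-\vol(C) + V_{m_{ADM}}(\area(\partial C))$ contributes a constant and cannot destroy monotonicity; the unfrozen components continue to satisfy the hypotheses of the first two stages because only components whose perimeter has already dropped to $36\pi m_{ADM}^{2}$ are frozen, and the outward area-minimizing property passes to the evolving unfrozen part.

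The main obstacle is Stage 2: controlling component merging, topology change, and sudden disappearance at singular times without losing the pointwise-smooth inequality, and interlocking these events with the freezing mechanism so that no ``jump upward'' can occur. This delicate bookkeeping, performed via elliptic regularization and Brakke-type semicontinuity, is precisely what is carried out in the proof of \cite[Proposition 30]{Jauregui-Lee:2016}, and one applies that result directly once the hypotheses (outward area-minimizing initial datum, non-negative scalar curvature, positive ADM mass, Hawking mass bound $m_H \leq m_{ADM}$ on the evolving surface) have been verified.
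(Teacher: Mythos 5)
Your proposal ultimately rests, as does the paper, on citing Proposition 30 of \cite{Jauregui-Lee:2016}: the paper states this result as an attributed proposition with no independent proof, and your treatment of the genuinely hard part (singular times, component merging, and the interaction with the freezing mechanism) is likewise an explicit appeal to that reference. Your preliminary sketch of the smooth case --- Huisken's computation via Cauchy--Schwarz, the identity $V_{m}'(A) = 1/H_{\mathrm{Schw}}(A)$, and the Hawking mass bound $m_{H} \leq m_{ADM}$ coming from the outward area-minimizing property (note that Lemma \ref{lem:monotonicity}, not Lemma \ref{lem:HIiso}, is the general statement you want here) --- is correct in outline, so this is essentially the same approach as the paper's.
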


We return to our previous setting, where each $\Omega_{k}$ is the large component of a large isoperimetric regions and where the rescaled regions $\tilde \Omega_{k}$ converge to $B_{1}(\xi)$ for some $\xi \not = 0$. We have already seen that the original level set flow $\{ \Omega_{k}(t) \}_{t \geq 0}$ with initial condition $\Omega_k(0) =\Omega_{k}$ has the property that --- for $t \in [t_k,T_{k}]$ --- there is a \emph{unique} large component $\Gamma_k(t)$ of $\Omega_k (t)$. The boundary $\Sigma_{k}(t) = \partial\Gamma_{k}(t)$ of this component is smooth. We recall that 
\[
t_{k} = \rho_{k}^{2}(\varepsilon + \tilde T(|\xi|)) \qquad \text{ and } \qquad T_{k} =\rho_{k}^{2}(1/4-\varepsilon)
\]
have been chosen as smooth times for the level set flow $\{\Omega_{k}(t)\}_{t\geq0}$. The surface $\Sigma_k(t)$ is close to a Euclidean sphere of radius $  (\rho_k^2 - 4 t)^{1/2}$  with center at $\rho_k \, \xi$ in the chart at infinity \eqref{eqn:chartatinfinity}. Consider the modified flow $\{ \hat \Omega_{k}(t)\}_{t \geq 0}$ of J. Jauregui and D. Lee described above. By what we have just said, 
\[
\area (\Sigma_k(t_k)) \geq 36\pi (m_{ADM})^2
\]
provided that $k$ is sufficiently large. We see that the large components $\Gamma_k(t_k)$ are not affected by the freezing that defines the passing from the original to the modified level set flow --- their perimeter is too large. Thus $\hat \Omega_{k}(t_k)$ is the disjoint union $E_{k} (t_k) \cup \Gamma_{k}(t_k)$ where
\begin{align} \label {eqn:estimateEj}
\vol(E_{k} (t_k))  = o(\rho_{k}^{3}) \qquad \text{ and } \qquad  \area(\partial E_{k} (t_k))  = o(\rho_{k}^{2}).
\end{align}


\section {Proof of Theorem \ref{thm:main} when $\tau > 1/2$}

We continue with the notation of Section \ref{sec:MCF}. The strategy of the proof is illustrated in Figure \ref{fig:mcf}.

\begin{proposition}\label{prop:centering-iso}
$\xi = 0$.
\end{proposition}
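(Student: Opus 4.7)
Plan. Suppose, for contradiction, that $\xi \neq 0$. The approach monitors the Schwarzschild isoperimetric defect $F_m(\Omega) := -\vol(\Omega) + V_m(\area(\partial^*\Omega))$ along the Jauregui--Lee modified level set flow and its classical smooth continuation, running two consecutive monotonicity stages with distinct comparison masses: first with $m = m_{ADM}$ on $[0, t_k]$, then with $m = m_k = o(1)$ on $[t_k, T_k]$. The initial defect $F_{m_{ADM}}(\Omega_{V_k})$ is $o(\rho_k^2)$ by the sharp isoperimetric characterization of the ADM mass, while shifting the comparison mass at $t_k$ picks up a macroscopic contribution of order $-\rho_k^2$ that then propagates forward and contradicts the near-Euclidean shape of $\Gamma_k(T_k)$.

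\emph{Stage 1 and mass change.} By Appendix \ref{sec:sharpisoperimetric}, $m_{iso}(\Omega_{V_k}) \to m_{ADM}$, so expanding $V_{m_{ADM}}(A) = A^{3/2}/(6\sqrt{\pi}) + m_{ADM} A/2 + O(\sqrt{A})$ gives $F_{m_{ADM}}(\Omega_{V_k}) = \tfrac{1}{2}\area(\partial \Omega_{V_k})(m_{ADM} - m_{iso}(\Omega_{V_k})) + O(\rho_k) = o(\rho_k^2)$. Proposition \ref{prop:weak-Huisken-mon} applied on $[0, t_k]$ yields $F_{m_{ADM}}(\hat\Omega_k(t_k)) \leq o(\rho_k^2)$. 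The identity $V_{m_{ADM}}(A) - V_{m_k}(A) = \tfrac{1}{2}(m_{ADM} - m_k)A$, combined with the lower bound $\area(\partial^*\hat\Omega_k(t_k)) \geq A_k^* \geq c(\varepsilon, |\xi|) \rho_k^2$ from the smooth convergence in Corollary \ref{coro:limit-rescaled-MCF} and with $m_k = o(1)$ from Corollary \ref{coro:disconn-Hawk-mass}, then gives $F_{m_k}(\hat\Omega_k(t_k)) \leq -c_1\rho_k^2$ for $k$ large. Using the decomposition $\hat\Omega_k(t_k) = E_k(t_k) \sqcup \Gamma_k(t_k)$ together with the super-additivity of the leading $A^{3/2}$ part of $V_{m_k}$ and an isoperimetric-type bound on the frozen residual $E_k(t_k)$ (the delicate step; see below), this reduces to $-\vol(\Gamma_k(t_k)) + V_{m_k}(A_k^*) \leq -c_2\rho_k^2$.

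\emph{Stage 2 and contradiction.} On $[t_k, T_k]$ the large component evolves by a smooth classical mean curvature flow (Corollary \ref{coro:limit-rescaled-MCF}), with $m_H(\Sigma_k(t)) \leq m_k$ by Corollary \ref{coro:disconn-Hawk-mass} and $\area(\Sigma_k(t)) \gg 16\pi m_k^2$ throughout. Huisken's classical monotonicity \eqref{eq:Huisken-iso-mon} with mass $m_k$ then propagates the estimate forward:
\[
-\vol(\Gamma_k(T_k)) + V_{m_k}(\area(\Sigma_k(T_k))) \leq -c_2 \rho_k^2.
\]
However, by Corollary \ref{coro:limit-rescaled-MCF}, $\Sigma_k(T_k)$ is close to the outlying coordinate sphere $S_{r_k(T_k)}(\rho_k\xi)$ with $r_k(T_k) = 2\sqrt{\varepsilon}\,\rho_k$; Proposition \ref{prop:av01} and the Euclidean isoperimetric identity for outlying balls (the key corollary at the end of Section \ref{sec:isospheres}) imply that $-\vol(\Gamma_k(T_k)) + \area(\Sigma_k(T_k))^{3/2}/(6\sqrt{\pi}) \geq -o(\rho_k^2)$, and the $m_k \area(\Sigma_k(T_k))/2$ contribution is $o(1)\cdot O(\rho_k^2) = o(\rho_k^2)$. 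This yields a lower bound on the left-hand side that contradicts $-c_2 \rho_k^2$ for $k$ sufficiently large.

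\emph{Main obstacle.} The delicate step is the extraction of the large-component bound from $F_{m_k}(\hat\Omega_k(t_k)) \leq -c_1\rho_k^2$. A naive perturbation estimate produces $V_{m_k}(A_k^* + \area(\partial E_k(t_k))) - V_{m_k}(A_k^*) = V_{m_k}'(A_k^*) \cdot \area(\partial E_k(t_k)) = O(\rho_k)\cdot o(\rho_k^2) = o(\rho_k^3)$, which would swamp the $-c_1\rho_k^2$. Avoiding this requires exploiting the exact super-additivity $(A_k^* + B)^{3/2} \geq (A_k^*)^{3/2} + B^{3/2}$ of the dominant cubic power in $V_{m_k}$ together with an isoperimetric-type inequality controlling $\vol(E_k(t_k))$ by $V_{m_k}(\area(\partial E_k(t_k)))$ (available because $(M, g)$ is asymptotically flat with non-negative scalar curvature), ensuring that the net contribution of the frozen residual to $F_{m_k}$ stays non-positive up to an $O(\rho_k) = o(\rho_k^2)$ error.
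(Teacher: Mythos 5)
Your two--stage strategy --- Jauregui--Lee monotonicity with comparison mass $m_{ADM}$ on $[0,t_k]$, a switch to the disconnected-regime Hawking mass bound $m_k=o(1)$, Huisken monotonicity on $[t_k,T_k]$, and super-additivity of $A^{3/2}$ plus an isoperimetric bound to dispose of the frozen residual $E_k(t_k)$ --- is exactly the paper's argument, merely reorganized to run forward in time throughout (the paper instead derives the Stage-2 estimate first, as a lower bound at $t_k$, and then feeds it into the Stage-1 inequality). Your Stage 1 and your ``main obstacle'' discussion are sound and match the paper's two-case treatment of $\area(\partial E_k(t_k))$.

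There is, however, a genuine gap at the Stage-2 endpoint. You claim that Proposition \ref{prop:av01} together with the corollary at the end of Section \ref{sec:isospheres} yields $-\vol(\Gamma_k(T_k))+\area(\Sigma_k(T_k))^{3/2}/(6\sqrt{\pi})\geq -o(\rho_k^2)$. Those results are not available here: they are proved under the stronger decay $\tau=1$ (the paper notes $\tau>3/4$ is the threshold), whereas this proposition is precisely the engine of the proof for $\tau>1/2$; they concern boundaries of isoperimetric regions, for which the Christodoulou--Yau bound and the Simons-identity curvature estimates of Proposition \ref{prop:moser} are in force, not slices of a mean curvature flow; and, independently of all that, the smooth convergence in Corollary \ref{coro:limit-rescaled-MCF} only determines $\area(\Sigma_k(T_k))$ up to an $o(\rho_k^2)$ error, while controlling the defect $\vol-A^{3/2}/(6\sqrt{\pi})$ to precision $o(\rho_k^2)$ requires knowing the area to precision $o(\rho_k)$ (since $\delta(A^{3/2})\sim \rho_k\,\delta A$). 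The correct, and sufficient, endpoint estimate is the sharp isoperimetric inequality \eqref{eqn:isoperimetricinequality}, which gives $-\vol(\Gamma_k(T_k))+V_{m_k}(\area(\Sigma_k(T_k)))\geq -m_{ADM}\area(\Sigma_k(T_k))+o(\rho_k^2)=-16\pi\varepsilon\, m_{ADM}\rho_k^2+o(\rho_k^2)$; this loss is of order $\varepsilon\rho_k^2$ and is beaten by your $-c_2\rho_k^2$ with $c_2\approx 2\pi m_{ADM}(1-4\varepsilon-4\tilde T(|\xi|))$ precisely because of the choice $100\varepsilon<1-4\tilde T(|\xi|)$ in \eqref{eqn:choiceepsilon}. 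With that substitution your argument closes and coincides with the paper's.
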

\begin{proof}
Assume that $\xi \neq 0$. We continue with the notation set forth above. Note that 
\[
\{ \Sigma_{k}(t) = \partial \Gamma_k(t) \}_{t \in [t_k, T_k]}
\]
is a smooth mean curvature flow. In Corollary \ref{coro:disconn-Hawk-mass} we have seen that the Hawking masses of the surfaces along this flow are bounded by $m_k = o(1)$ as $k \to \infty$. By G. Huisken's monotonicity \eqref{eq:Huisken-iso-mon} for $\Sigma_{k}(t)$ applied with the Hawking mass bound $m = m_k = o(1)$, we have that
\[
- \vol(\Gamma_{k}(t_{k})) + \frac{1}{6\sqrt{\pi}} \area(\Sigma_{k}(t_{k}))^{3/2 } + o(\rho_{k}^{2}) \geq - \vol(\Gamma_{k}(T_{k})) + \frac{1}{6\sqrt{\pi}} \area(\Sigma_{k}(T_{k}))^{3/2 } + o(\rho_{k}^{2})
\]
where we have also used that 
\[
\area(\Sigma_{k}(T_{k})) = 4\varepsilon \rho_{k}^{2} + o(\rho_{k}^{2}) \geq 36 \pi (m_k)^2
\]
as $k \to \infty$. On the other hand, by the sharp isoperimetric inequality \eqref{eqn:isoperimetricinequality} for $(M, g)$, 
\begin{align*}
- \vol(\Gamma_{k}(T_{k})) + \frac{1}{6\sqrt{\pi}} \area(\Sigma_{k}(T_{k}))^{3/2 } &\geq -m_{ADM} \area(\Sigma_{k}(T_{k}))  \\ & = -16\pi\varepsilon m_{ADM} \rho_{k}^{2} + o(\rho_{k}^{2}).
\end{align*}
Combining these two estimates, we obtain  
\begin{align}\label{eq:Gammaj-Eucl-iso}
- \vol(\Gamma_{k}(t_{k})) + \frac{1}{6\sqrt{\pi}} \area(\Sigma_{k}(t_{k}))^{3/2 } \geq -16\pi\varepsilon m_{ADM} \rho_{k}^{2} + o(\rho_{k}^{2}).
\end{align}

We now apply Proposition \ref{prop:weak-Huisken-mon} to the modified weak flow $\{ \hat\Omega_{k}(t) \}_{t \geq 0}$ between the (smooth) times $t=0$ and $t= t_{k}$. In the first line below we use that $\Omega_k$ --- as the substantial component of a large isoperimetric region --- almost saturates the sharp isoperimetric inequality \eqref{eqn:isoperimetricinequality} on $(M, g)$.   
\begin{align*}
0  = 
&-\vol(\Omega_{k}) + \frac{1}{6\sqrt{\pi}} \area(\partial \Omega_{k})^{3/2 } + \frac{m_{ADM}}{2} \area(\partial \Omega_{k}) + o (\rho_k^2) \\
 \geq &- \vol(\hat \Omega_{k}(t_{k})) + \frac{1}{6\sqrt{\pi}} \area(\partial \hat \Omega_{k}(t_{k}))^{3/2 } + \frac {m_{ADM}}{2} \area(\partial \hat \Omega_{k}(t_{k})) + o(\rho_{k}^{2})\\
 =& - \vol(\Gamma_{k}(t_{k})) + \frac{1}{6\sqrt{\pi}} \area(\Sigma_{k}(t_{k}))^{3/2 } \\
&\qquad   -  \vol(E_{k} (t_k)) + \frac{1}{6\sqrt{\pi}} \left( (\area(\Sigma_{k}(t_{k})) + \area(\partial E_{k} (t_k)) )^{3/2 } - \area(\Sigma_{k}(t_{k}))^{3/2 }\right) \\
& \qquad + \frac{m_{ADM}}{2} \area(  \Sigma_{k}(t_{k})) + o(\rho_{k}^{2}) \\
\geq   & -  \vol(E_{k} (t_k)) + \frac{1}{6\sqrt{\pi}} \left( (\area(\Sigma_{k}(t_{k})) + \area(\partial E_{k} (t_k)) )^{3/2 } - \area(\Sigma_{k}(t_{k}))^{3/2 }\right) \\
& \qquad + \frac{m_{ADM}}{2} \area(\Sigma_{k}(t_{k})) - 16\pi \varepsilon m_{ADM}\rho_{k}^{2} + o(\rho_{k}^{2}) .
\end{align*}
The final inequality follows from \eqref{eqn:estimateEj} and \eqref{eq:Gammaj-Eucl-iso}. 

Assume first that $\area(\partial E_{k} (t_k)) = O(1)$ as $j\to\infty$. Then $\vol(E_{k} (t_k)) = O(1)$ as well, and 
\[
 -  \vol(E_{k} (t_k)) + \frac{1}{6\sqrt{\pi}} \left( (\area(\Sigma_{k}(t_{k})) + \area(\partial E_{k} (t_k)) )^{3/2 } - \area(\Sigma_{k}(t_{k}))^{3/2 }\right)  \geq - O(1)
\] 
as $j\to\infty$. Thus
\begin{align} \label{eqn:auxcontra}
\area(\Sigma_{k}(t_{k})) \leq  (8 \varepsilon + o (1) ) 4 \pi  \rho_{k}^{2}.
\end{align}
This contradicts the choice $\varepsilon > 0$ in \eqref{eqn:choiceepsilon}, because
\[
\area(\Sigma_{k}(t_{k})) = (1- 4 \varepsilon - 4\tilde T(|\xi|) + o (1)) 4 \pi \rho_{k}^{2}.
\]

Assume now that $\area(\partial E_{k} (t_k)) \to\infty$ as $k \to \infty$. Then 
\[
\vol(E_{k} (t_k)) \leq \frac{1}{6\sqrt{\pi}} \area(\partial E_{k} (t_k))^{3/2 } + m_{ADM} \area (\partial E_k(t_k))
\]
by the sharp isoperimetric inequality \eqref{eqn:isoperimetricinequality}. Combining this with the above and \eqref{eqn:estimateEj}, we have
\begin{align*}
 0 \, \geq  \, & \frac{1}{6\sqrt{\pi}} \left( \left (\area(\Sigma_{k}(t_{k})) + \area(\partial E_{k} (t_k)) \right)^{3/2 } - \area(\Sigma_{k}(t_{k}))^{3/2 } - \area(\partial E_{k} (t_k))^{3/2 } \right) \\
& + \frac {m_{ADM}}{2} \area(\Sigma_{k}(t_{k})) - 16\pi \varepsilon m_{ADM} \rho_{k}^{2} + o(\rho_{k}^{2}).
\end{align*}
Using that
\[
x^{3/2 } + y^{3/2 } \leq (x+y)^{3/2 } 
\]
for all $x, y \geq 0$, we arrive again at the contradictory estimate  \eqref{eqn:auxcontra}.
\end{proof}

\begin {proof} [Proof of Theorem \ref{thm:main}] Combining Lemma  \ref{lem:decomposition} and Proposition \ref{prop:centering-iso}, we see that every sufficiently large isoperimetric region is connected and close to the centered coodinate ball $B_1(0)$ when put to scale of its volume in the chart at infinity \eqref{eqn:chartatinfinity}. By the uniqueness of large stable constant mean curvature spheres described in Appendix \ref{sec:canonicalfoliation}, the outer boundary of such an isoperimetric region is a leaf of the canonical foliation. 
\end {proof}

\begin{figure} 
\begin{tikzpicture}[scale=.8] 
	\begin{scope}[shift = {(5.5,8.5)}]
		\node at (50:4.5) {$\mathbb{R}^{3}\setminus\{0\}$};

		\filldraw (0,0) circle (1pt);
		\node at (.2,0) {$\scriptstyle{\xi}$};
		
		\draw [->]  (80:.05) -- (80:2.95);
		\node at (140: 3.7) {$B_{1}(\xi)$};

		\draw [dashed] (0,0) circle (3);
		\node at (0:3.6) {$\partial\tilde \Omega_{k}$};
		\draw [smooth cycle, tension = .7] plot coordinates {(0:3) (45:3.2) (90:3.1) (135:2.9) (180:3.1) (225:3.05) (270:2.92) (315:3.2)};
		
		\node at (270:2.4) {$\scriptstyle{\times}$}; 
		
		\node at (270:4) {$(a)$};
	\end{scope}

	\node at (50:4.5) {$\mathbb{R}^{3}\setminus\{0\}$};

	\filldraw [opacity = .2, smooth cycle, tension = .7] plot coordinates {(0:3) (45:3.2) (90:3.1) (135:2.9) (180:3.1) (225:3.05) (270:2.92) (315:3.2)};
	\filldraw [white, smooth cycle, tension = .7] plot coordinates {(0:2.05) (45:2.07) (90:2) (135:1.92) (180:1.93) (225:2) (270:2.05)  (315:2)};
	\filldraw [white, smooth cycle, tension = .7] plot coordinates {(0:.5) (45:.53) (90:.5) (135:.48) (180:.5) (225:.52) (270:.5)  (315:.48)};
	\filldraw [white, smooth cycle] plot coordinates {(270:2.6) (274:2.3) (270:2.2) (263:2.3)};

	\node at (0:3.6) {$\partial\tilde \Omega_{k}$};
	\draw [smooth cycle, tension = .7] plot coordinates {(0:3) (45:3.2) (90:3.1) (135:2.9) (180:3.1) (225:3.05) (270:2.92) (315:3.2)};
	
	\node at (270:2.4) {$\scriptstyle{\times}$}; 
	
	\filldraw (0,0) circle (1pt);
	\node at (.2,0) {$\scriptstyle{\xi}$};

	\draw [smooth cycle, tension = .7] plot coordinates {(0:2.5) (45:2.6) (90:2.4) (135:2.4) (180:2.4) (225:2.5) (260:2.5) (270:2.75) (280:2.5) (315:2.6)};
	\node at (200: 4.2) {$\partial\tilde E_{k}(t_{k})$};
	\draw [->] (205:3.4) -- (261:2.3);
	\draw [smooth cycle] plot coordinates {(270:2.6) (274:2.3) (270:2.2) (263:2.3)};

	\node at (160:4) {${\tilde \Sigma_{k}(t_{k})}$};
	\draw [->] (157:3.3) -- (160:2);
	\draw [smooth cycle, tension = .7] plot coordinates {(0:2.05) (45:2.07) (90:2) (135:1.92) (180:1.93) (225:2) (270:2.05)  (315:2)};

	\node at (120:4) {${\tilde \Sigma_{k}(T_{k})}$};
	\draw [->] (120:3.5) -- (120:.55);
	\draw [smooth cycle, tension = .7] plot coordinates {(0:.5) (45:.53) (90:.5) (135:.48) (180:.5) (225:.52) (270:.5)  (315:.48)};

		\node at (270:4) {$(b)$};

\begin{scope}[shift = {(11,0)}] 
	
	\node at (50:4.5) {$\mathbb{R}^{3}\setminus\{0\}$};

	\filldraw [white, smooth cycle, tension = .7] plot coordinates {(0:2.05) (45:2.07) (90:2) (135:1.92) (180:1.93) (225:2) (270:2.05)  (315:2)};
	\filldraw [opacity=.1,smooth cycle, tension = .7] plot coordinates {(0:2.05) (45:2.07) (90:2) (135:1.92) (180:1.93) (225:2) (270:2.05)  (315:2)};
	\filldraw [white, smooth cycle, tension = .7] plot coordinates {(0:.5) (45:.53) (90:.5) (135:.48) (180:.5) (225:.52) (270:.5)  (315:.48)};
	\filldraw [white, smooth cycle] plot coordinates {(270:2.6) (274:2.3) (270:2.2) (263:2.3)};

	\node at (0:3.6) {$\partial\tilde \Omega_{k}$};
	\draw [smooth cycle, tension = .7] plot coordinates {(0:3) (45:3.2) (90:3.1) (135:2.9) (180:3.1) (225:3.05) (270:2.92) (315:3.2)};
	
	\node at (270:2.4) {$\scriptstyle{\times}$}; 
	
	\filldraw (0,0) circle (1pt);
	\node at (.2,0) {$\scriptstyle{\xi}$};

	\draw [smooth cycle, tension = .7] plot coordinates {(0:2.5) (45:2.6) (90:2.4) (135:2.4) (180:2.4) (225:2.5) (260:2.5) (270:2.75) (280:2.5) (315:2.6)};
	\node at (200: 4.2) {$\partial\tilde E_{k}(t_{k})$};
	\draw [->] (205:3.4) -- (261:2.3);
	\draw [smooth cycle] plot coordinates {(270:2.6) (274:2.3) (270:2.2) (263:2.3)};

	\node at (160:4) {${\tilde \Sigma_{k}(t_{k})}$};
	\draw [->] (157:3.3) -- (160:2);
	\draw [smooth cycle, tension = .7] plot coordinates {(0:2.05) (45:2.07) (90:2) (135:1.92) (180:1.93) (225:2) (270:2.05)  (315:2)};

	\node at (120:4) {${\tilde \Sigma_{k}(T_{k})}$};
	\draw [->] (120:3.5) -- (120:.55);
	\draw [smooth cycle, tension = .7] plot coordinates {(0:.5) (45:.53) (90:.5) (135:.48) (180:.5) (225:.52) (270:.5)  (315:.48)};
	\node at (270:4) {$(c)$};
\end{scope}
\end{tikzpicture}

\caption{
We depict here the case where $0<|\xi| < 1$. In (a), a sequence of large isoperimetric regions $\Omega_{k}$ is assumed to limit to $B_{1}(\xi)$ after rescaling. The convergence is smooth on compact subsets of $\mathbb{R}^{3}\setminus\{0\}$. Here, the origin is denoted by ``$\times$.''  
\\ \ \\ 
In (b) and (c), we depict boundaries of the (modified) level set flows. We show that the large component of the level set flow ``disconnects.'' The large disconnected component is labeled $\tilde\Sigma_{k}(t_{k})$. It is possible that there are additional components $\tilde E_{k}(t_{k})$ of the modified flow. 
\\ \ \\  
In (b), the change of the isoperimetric deficit as the flow sweeps out the shaded region is estimated by the Hawking mass bound of $m_{ADM}$. On the other hand, in (c), the lightly shaded region is swept out by surfaces with Hawking mass bounded by $o(1)$ as $k\to\infty$. This leads to improved estimates for the deficit, showing that the original region $\Omega_{k}$ cannot have been isoperimetric. 
\\ \ \\ 
When $|\xi|=1$, a similar situation occurs, except the flow disconnects from the origin after a short time (in the rescaled picture). We must wait this short time before arguing as in (c), so there will be a thin region as in (b) in this case. If $|\xi|>1$, the flow is completely disconnected, so we do not need to consider the shaded region as in (b).
} 
\label{fig:mcf}
\end{figure}


\appendix 


\section {Canonical foliation} \label{sec:canonicalfoliation}

In this section, we state results on the existence and uniqueness of a canonical foliation through stable constant mean curvature spheres of the end of an asymptotically flat Riemannian $3$-manifold $(M, g)$ with positive mass. The generality of the discussion here is tailored to our application in the proof of Theorem \ref{thm:main}. In particular, the assumption of non-negative scalar curvature can be replaced by a stronger decay assumption on the scalar curvature; see the work of C. Nerz  \cite{Nerz:2014}. \\

All results discussed below depart from the pioneering work of G. Huisken and S.-T. Yau \cite{Huisken-Yau:1996} and of J. Qing and G. Tian \cite{Qing-Tian:2007} for initial data that is \emph{asymptotic to Schwarzschild} with positive mass. We also mention here the crucial intermediate results of L.-H. Huang \cite{Huang:2010} for \emph{asymptotically even data}. We refer to the recent articles \cite{Cederbaum-Nerz:2015} by C. Cederbaum and C. Nerz, \cite{Huang:2011} by L.-H. Huang, \cite{Ma:2016}  by S. Ma, and \cite{Nerz:2014} by C. Nerz for an overview of the literature on this exceptionally rich subject.  \\

The following uniqueness and existence results are, in the stated generality, due to C. Nerz \cite{Nerz:2014}. Let $(M, g)$ be a Riemannian $3$-manifold that is asymptotically flat at rate $\tau > 1/2$ and which has non-negative scalar curvature and $m_{ADM} > 0$. There are $H_0 > 0$, a compact subset $C \subset M$ with $B_1 \subset C$, and a diffeomorphism 
\[
\Phi : (0, H_0) \times \mathbb{S}^2 \to M \setminus C
\]
such that 
\[
\Phi (\{H\} \times \mathbb{S}^2) = \Sigma^H
\]
is a constant mean curvature sphere with mean curvature $H>0$ for every $H \in (0, H_0)$. In the chart at infinity \eqref{eqn:chartatinfinity}, 
\[
(H/2) \, \Sigma^H \to S_1(0) = \{ x \in \R^3: |x| = 1\}
\]
smoothly as $H \searrow 0$. We have, by the remark preceding Proposition A.1 in \cite{Nerz:2014}, that
\begin{align} \label{eqn:ADMHawking}
m_{ADM} = \lim_{H \searrow 0} \, \sqrt { \frac{\area (\Sigma^H)}{16 \pi}} \left( 1 - \frac{H^2 \area (\Sigma^H)}{16 \pi}\right).
\end{align}
Moreover, $\Sigma^H$ is the unique stable constant mean curvature sphere of mean curvature $H$ that is geometrically close to the coordinate sphere $S_{2/H} (0)$ in the chart at infinity \eqref{eqn:chartatinfinity}. \\

S. Ma  has shown in \cite{Ma:2016} that under the stronger decay assumption that 
\begin{align} \label{eqn:Masigma}
|x|^{|\alpha|} |(\partial^\alpha \sigma_{ij}) (x)| = O (|x|^{-1}) \qquad \text{ as } \qquad |x| \to \infty
\end{align}
for all multi-indices $\alpha$ of length $|\alpha| = 0, 1, 2, 3, 4$  (one additional derivative) and
\begin{align} \label{eqn:MaR}
R(x) = O (|x|^{- 3 - \epsilon})\qquad \text{ as } \qquad |x| \to \infty
\end{align}
for some $\epsilon > 0$ in the chart at infinity \eqref{eqn:chartatinfinity}, the compact subset $C \subset M$ above can be chosen so that each leaf $\Sigma^H$ of the canonical foliation is the only stable constant mean curvature sphere of mean curvature $H \in (0, H_0)$ enclosing $C$.


\section {General properties of the isoperimetric profile}

In this section, we recall some useful properties about the \emph{isoperimetric profile} \eqref{eqn:isoperimetricprofile} 
\[
A : (0, \infty) \to (0, \infty)
\]
of an asymptotically flat Riemannian $3$-manifolds $(M, g)$ that are used throughout the paper. The general results about the isoperimetric profile discussed below are established in e.g. \cite{Bavard-Pansu:1986, Bray:1997, Ros:2005, Flores-Nardulli:2014}.\\
 
Locally, the isoperimetric profile can be written as the sum of a concave and a smooth function. In particular, the isoperimetric profile is absolutely continuous and its left and right derivatives $A^-(V)$, $A^+(V)$ exist at every $V > 0$, and they agree at all but possibly countably many $V >0$. We have that 
\[
 \lim_{W \searrow V} A'{}^+ (W) \leq A'{}^+ (V) \leq A'^- (V) \leq \lim_{W \nearrow V} A'{}^- (W). 
\]
Assume that for some $V>0$ there is $\Omega_V \in \mathcal{R}_V$ with 
\[
A(V) = \area (\partial \Omega_V) - \area (\partial M).
\]
Such \emph{isoperimetric regions} exist for every sufficiently large volume $V>0$ when the mass of $(M, g)$ is positive by (the proof of) Theorem 1.2 in \cite{hdiso}, and for every volume $V>0$ when the scalar curvature of $(M, g)$ is non-negative by Proposition K. 1 in \cite{mineffectivePMT}.  The boundary $ \partial \Omega_V \setminus \partial M$ is a stable constant mean curvature surface. Its mean curvature $H$ is positive when computed with respect to the outward unit normal. Moreover, 
\[
A'^{+} (V) \leq H \leq A'^{-} (V).
\]
In particular, the isoperimetric profile is a strictly increasing function. At volumes $V > 0$ where the isoperimetric profile is differentiable, the boundaries of all isoperimetric regions of volume $V$ have the same constant mean curvature. 


\section{Sharp isoperimetric inequality} \label{sec:sharpisoperimetric}

The  characterization of the ADM-mass through the isoperimetric deficit of large centered coordinate spheres in Lemma \ref{lem:FSTM} below was proposed by G. Huisken \cite{Huisken:iso-mass} and proved by X.-Q. Fan, P. Miao, Y. Shi, and L.-F. Tam as Corollary 2.3 in \cite{Fan-Shi-Tam:2009}. 

\begin {lemma} \label{lem:FSTM}
Let $(M, g)$ be a complete Riemannian $3$-manifold that is asymptotically flat. Then 
\[
m_{ADM} = \lim_{\rho \to \infty} \frac{2}{\area(S_\rho)} \left( \vol (B_\rho) - \frac{\area(S_\rho)^{3/2}}{6 \sqrt \pi}\right).
\]
\end {lemma}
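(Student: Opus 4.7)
\textbf{Plan for the proof of Lemma \ref{lem:FSTM}.} The strategy parallels the proof of Proposition \ref{prop:av01}, adapted to the centered case $a = 0$. There are two main technical differences: first, the key estimate from Lemma \ref{0m} no longer degenerates toward zero -- instead, the relevant boundary integral $\int_{S_t}(\partial_i \sigma_{ij} - \partial_j\sigma_{ii}) n^j$ converges as $t \to \infty$ to $16\pi m_{ADM}$ by the very definition of the ADM-mass; and second, the integration path from $\rho=1$ to $\rho = \infty$ passes through the compact core of $(M,g)$, so we must absorb bounded boundary contributions at $\rho=1$ into the $o(\rho^2)$ errors.

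First, I would expand the area and volume of the centered spheres $S_\rho$ in the chart at infinity, verbatim as in \eqref{sal} with $a = 0$:
\[
\area(S_\rho) = 4\pi\rho^2 + \frac{1}{2}\int_{S_\rho}(\delta^{ij} - n^i n^j)\sigma_{ij} + O(\rho^{2 - 2\tau}),
\]
where $n^i = x^i/|x|$. Since $\tau > 1/2$, the error is $o(\rho)$. Next, I would mimic the derivation of identity \eqref{aa3} in the proof of Proposition \ref{prop:av01}. The only substantive change is the treatment of the ``mass-like'' term $\frac{1}{2}\int_{S_t}\delta^{ik}n^j(\partial_j\sigma_{ik} - \partial_i\sigma_{kj}) = -\frac{1}{2}\int_{S_t}(\partial_i \sigma_{ij} - \partial_j\sigma_{ii})n^j$: whereas Lemma \ref{0m} allowed this to be absorbed into the error in the outlying case, here we instead use
\[
\int_{S_t}(\partial_i\sigma_{ij} - \partial_j\sigma_{ii})n^j = 16\pi m_{ADM} + o(1) \qquad \text{as } t \to \infty.
\]
Combined with the co-area identity \eqref{va} to eliminate $\int_{S_t}\sigma_{ij}n^i n^j$, this yields
\[
\partial_t(t\, \area (S_t)) = 4\pi t^2 + 2\, \partial_t\vol(B_t) - 8\pi m_{ADM}\, t + o(t).
\]

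Integrating from $t = 1$ to $t = \rho$, the $O(1)$ boundary contributions at $t=1$ are absorbed into the $o(\rho^2)$ error, giving
\[
\rho \,\area(S_\rho) = \frac{4\pi \rho^3}{3} + 2\vol(B_\rho) - 4\pi m_{ADM}\rho^2 + o(\rho^2).
\]
Substituting the expansion of $\area(S_\rho)$ on the left and rearranging,
\[
\vol(B_\rho) = \frac{4\pi\rho^3}{3} + \frac{\rho}{4}\int_{S_\rho}(\delta^{ij} - n^i n^j)\sigma_{ij} + 2\pi m_{ADM} \rho^2 + o(\rho^2).
\]
On the other hand, writing $y = \area(S_\rho) - 4\pi\rho^2$ and using $(4\pi\rho^2 + y)^{3/2}/(6\sqrt\pi) = \tfrac{4\pi\rho^3}{3} + \tfrac{\rho y}{2} + O(y^2/\rho)$,
\[
\frac{\area(S_\rho)^{3/2}}{6\sqrt\pi} = \frac{4\pi\rho^3}{3} + \frac{\rho}{4}\int_{S_\rho}(\delta^{ij} - n^i n^j)\sigma_{ij} + o(\rho^2).
\]
Subtracting these two displayed identities yields the clean cancellation
\[
\vol(B_\rho) - \frac{\area(S_\rho)^{3/2}}{6\sqrt\pi} = 2\pi m_{ADM}\rho^2 + o(\rho^2),
\]
and division by $\area(S_\rho) = 4\pi\rho^2(1+o(1))$ completes the proof.

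\textbf{Main obstacle.} The delicate step is controlling the propagation of errors through the integration from $t=1$ to $t=\rho$. The $o(t)$ terms on the right-hand side of the differential identity come from several sources -- the $O(\rho^{2-2\tau})$ errors in the area/volume expansions, the $o(1)$ convergence to the ADM-mass, and the $O(\int_{S_t}|x|^{-2})$ and $O(\int_{S_t}|x|^{-3})$ contributions as in \eqref{paa} -- and each must be shown to remain $o(t)$ uniformly in $t$ under the hypothesis $\tau > 1/2$ alone. One must also verify the exact algebraic cancellation of the $\tfrac{\rho}{4}\int_{S_\rho}(\delta^{ij} - n^i n^j)\sigma_{ij}$ terms in the subtraction, which is what isolates the mass as the residual.
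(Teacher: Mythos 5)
Your argument is correct and is essentially the intended one: the paper gives no proof of this lemma, citing Corollary 2.3 of Fan--Miao--Shi--Tam, and its Proposition \ref{prop:av01} is exactly the outlying-sphere analogue of the computation you carry out, with Lemma \ref{0m} replaced --- as you do --- by the convergence of the flux integral $\int_{S_t}(\partial_i\sigma_{ij}-\partial_j\sigma_{ii})n^j \to 16\pi m_{ADM}$. The only point worth making explicit is that this convergence (and the finiteness of the limit) is where the integrability of the scalar curvature enters, via Bartnik's theorem on the existence of the ADM limit for $\tau>1/2$; the paper's definition of asymptotic flatness includes this hypothesis, so your proof is complete.
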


The following result was proposed by G. Huisken \cite{Huisken:iso-mass, Huisken:MM-iso-mass-video} and proven in detail by J. Jauregui and D. Lee as Theorem 3 in \cite{Jauregui-Lee:2016}. 

\begin {theorem} Let $(M, g)$ be a complete Riemannian $3$-manifold with non-negative scalar curvature that is asymptotically flat of rate $\tau > 1/2$. Then 
\[
m_{ADM} (M, g) = m_{iso} (M, g)
\]
where 
\[
m_{iso} (M, g) = \sup_{\{\Omega_i\}_{i=1}^\infty} \limsup_{i \to \infty} \frac{2}{\area (\partial \Omega_i)} \left( \vol (\Omega_i) - \frac{\area(\partial \Omega_i)^{3/2}}{6 \sqrt \pi }\right).  
\]
The supremum here is taken over all sequences $\{\Omega_i\}_{i=1}^\infty$ of smooth compact outward area-minimizing  regions that are increasing to $M$. 
\end {theorem}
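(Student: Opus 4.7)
My plan is to establish the two inequalities $m_{ADM} \leq m_{iso}$ and $m_{iso} \leq m_{ADM}$ separately. For the lower bound, Lemma \ref{lem:FSTM} identifies the ADM-mass with the limit of the isoperimetric deficit of the centered coordinate balls $B_\rho$. The balls $B_\rho$ need not themselves be outward area-minimizing, so I would pass to their outward area-minimizing hulls $\hat B_\rho$ in $(\hat M, \hat g)$. Since $\area(\partial \hat B_\rho) \leq \area(S_\rho)$ and $\vol(\hat B_\rho) \geq \vol(B_\rho)$, replacing $B_\rho$ by $\hat B_\rho$ only increases the isoperimetric deficit; it therefore suffices to check that these corrections are $o(\rho^2)$, which follows from asymptotic flatness together with the standard observation that large coordinate spheres are uniformly strictly mean-convex and thus very close to their outward-minimizing hulls. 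This yields an admissible sequence of outward area-minimizing regions exhausting $M$ whose deficit tends to $m_{ADM}$, showing $m_{iso} \geq m_{ADM}$.

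For the upper bound $m_{iso} \leq m_{ADM}$, I would combine the Huisken--Ilmanen Hawking-mass inequality (Lemma \ref{lem:monotonicity}) with the monotonicity of the isoperimetric deficit from Schwarzschild along the modified weak mean curvature flow of Jauregui--Lee (Proposition \ref{prop:weak-Huisken-mon}). Given a sequence $\{\Omega_i\}$ of smooth compact outward area-minimizing regions increasing to $M$, Lemma \ref{lem:monotonicity} provides $m_H(\partial \Omega_i) \leq m_{ADM}$. I would then run the modified level set flow $\{\hat \Omega_i(t)\}_{t \geq 0}$ of Jauregui--Lee starting from $\hat \Omega_i(0) = \Omega_i$, so that by Proposition \ref{prop:weak-Huisken-mon} the quantity
\[
t \longmapsto -\vol(\hat \Omega_i(t)) + V_{m_{ADM}}(\area(\partial^{*} \hat \Omega_i(t)))
\]
is non-increasing on $[0, T_i]$, where $T_i$ is the time when the flow has frozen up completely. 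The frozen remnant has perimeter at most $36 \pi (m_{ADM})^2$ per component and a uniformly bounded number of components by the geometry of $(M, g)$, so evaluating the monotonicity at $t = T_i$ yields
\[
\vol(\Omega_i) \leq V_{m_{ADM}}(\area(\partial \Omega_i)) + O(1)
\]
as $i \to \infty$. Plugging in the expansion
\[
V_m(A) = \frac{1}{6\sqrt \pi} A^{3/2} + \frac{m}{2} A + O(A^{1/2})
\]
from Section \ref{sec:MCF} with $m = m_{ADM}$ and dividing by $\area(\partial \Omega_i)/2$, one obtains
\[
\frac{2}{\area(\partial \Omega_i)}\left( \vol(\Omega_i) - \frac{\area(\partial \Omega_i)^{3/2}}{6\sqrt \pi} \right) \leq m_{ADM} + O(\area(\partial \Omega_i)^{-1/2}),
\]
and the $\limsup$ as $i \to \infty$ is at most $m_{ADM}$, as required.

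The main obstacle I foresee is the careful treatment of the weak flow in the upper-bound direction. One has to verify that the initial area $\area(\partial \Omega_i)$ exceeds $16 \pi (m_{ADM})^2$ so that Proposition \ref{prop:weak-Huisken-mon} applies, and one has to control the residue $\vol(\hat \Omega_i(T_i)) - V_{m_{ADM}}(\area(\partial \hat \Omega_i(T_i)))$ uniformly in $i$. The exhaustion hypothesis forces $\area(\partial \Omega_i) \to \infty$, so the area threshold is eventually satisfied; the Jauregui--Lee freezing mechanism confines the remnant to a bounded neighbourhood of the horizon, and the resulting residue is $O(1)$ independent of $i$. The lower-bound direction is comparatively routine, once one has the hull estimates for large coordinate balls described above.
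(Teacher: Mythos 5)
Your route to the hard inequality $m_{iso}\leq m_{ADM}$ is genuinely different from the paper's. The paper deliberately avoids the forward mean curvature flow here: it first observes that $m_{iso}(M,g)\leq \limsup_{V\to\infty}\frac{2}{A(V)}(V-\frac{A(V)^{3/2}}{6\sqrt\pi})$, because $\area(\partial\Omega_i)\geq A(\vol(\Omega_i))$ and the deficit is decreasing in the area variable, and then proves in Theorem \ref{prop:huisk-iso-mass} that this $\limsup$ equals $m_{ADM}$ by differentiating the isoperimetric profile: for almost every large $V$ the isoperimetric boundary is connected, outward area-minimizing and has mean curvature $A'(V)$, so Lemma \ref{lem:monotonicity} gives $\sqrt{A(V)/16\pi}\,(1-A'(V)^2A(V)/16\pi)\leq m_{ADM}$, and integrating the resulting differential inequality for $V-A(V)^{3/2}/(6\sqrt\pi)$ finishes the proof. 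What you propose instead is essentially the original Jauregui--Lee argument via the modified level set flow, which the paper explicitly presents an alternative to. Your lower bound is fine (and can be simplified: for $\rho$ large the coordinate spheres $S_\rho$ are themselves outward area-minimizing, so no hull is needed; and in any case passing to the minimizing hull only increases the deficit, so no quantitative estimate on the corrections is required --- note that your stated tolerance $o(\rho^2)$ would in fact be insufficient for the area correction, where one would need $o(\rho)$).

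The genuine gap is in your control of the frozen remnant. The assertion that ``the Jauregui--Lee freezing mechanism confines the remnant to a bounded neighbourhood of the horizon'' with a residue that is $O(1)$ independent of $i$ is not correct: components of the modified flow freeze wherever their perimeter happens to drop below $36\pi\,m_{ADM}^2$, which can occur anywhere in $M$, and the number of such components is not a priori bounded independently of $i$ (a mean-convex flow can pinch off many pieces). The naive estimate --- total frozen perimeter at most $\area(\partial\Omega_i)$, plus a per-component isoperimetric inequality --- only yields $\vol(\hat\Omega_i(T_i))=O(\area(\partial\Omega_i))$, which after dividing by $\area(\partial\Omega_i)$ contributes an $O(1)$ error to the deficit and destroys the sharp constant $m_{ADM}$. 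What actually saves the argument is not an upper bound on $\vol(\hat\Omega_i(T_i))$ alone but a lower bound on the full terminal quantity $-\vol(\hat\Omega_i(T_i))+V_{m_{ADM}}(\area(\partial^*\hat\Omega_i(T_i)))$: since every frozen component $E_j$ has $\area(\partial E_j)\leq 36\pi\, m_{ADM}^2$, a non-sharp isoperimetric inequality for $(M,g)$ gives $\vol(E_j)\leq C(\area(\partial E_j)^{3/2}+\area(\partial E_j))\leq C'\area(\partial E_j)$ with $C'$ depending only on $(M,g)$ and $m_{ADM}$; summing and comparing with the expansion $V_{m_{ADM}}(A)=\frac{1}{6\sqrt\pi}A^{3/2}+\frac{m_{ADM}}{2}A+O(A^{1/2})$ shows the terminal quantity is bounded below by a constant, because $A\mapsto \frac{1}{6\sqrt\pi}A^{3/2}-C'A$ is bounded below. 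You need to supply this (or an equivalent) step; as written, the justification of the key estimate is wrong even though the conclusion is recoverable. A smaller point: Proposition \ref{prop:weak-Huisken-mon} is stated in the paper only for the large component of a large isoperimetric region, so you must note that the Jauregui--Lee monotonicity applies to an arbitrary outward area-minimizing initial region, which rests on the flow remaining outward area-minimizing rather than on the single Hawking mass bound $m_H(\partial\Omega_i)\leq m_{ADM}$ you invoke.
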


We recall from \cite{Jauregui-Lee:2016} that the inequality
\[
m_{ADM} (M, g) \leq m_{iso} (M, g) 
\] 
follows from Lemma \ref{lem:FSTM}. Note that 
\[
m_{iso} (M, g) \leq \limsup_{V \to \infty}  \frac{2}{A(V)} \left(V - \frac{A(V)^{3/2 }}{6\sqrt{\pi}} \right).
\] 
We present below a short new proof of the reverse inequality that is based on the behavior of large isoperimetric regions. 

\begin{theorem} \label{prop:huisk-iso-mass}
Let $(M, g)$ be a complete Riemannian $3$-manifold with non-negative scalar curvature that is asymptotically flat of rate $\tau > 1/2$. Then 
\[
m_{ADM} = \lim_{V\to\infty} \frac{2}{A(V)} \left(V - \frac{A(V)^{3/2 }}{6\sqrt{\pi}} \right). 
\]

\begin{proof} Lemma \ref{lem:FSTM} implies that 
\[
m_{ADM} \leq \liminf_{V\to\infty} \frac{2}{A(V)} \left(V - \frac{A(V)^{3/2 }}{6\sqrt{\pi}} \right). 
\]
Indeed, for every $V > 0$, the function 
\[
x \mapsto \frac{2}{x} \left( V - \frac{x^{3/2}}{6 \sqrt \pi}\right)
\]
is decreasing on $(0, \infty)$. 

For the proof of the reverse inequality, assume first that $\partial M = \emptyset$. 

Let $V > 0$ large be such that $A'(V)$ exists. An isoperimetric region $\Omega$ of volume $V$ is connected with connected, outward area-minimizing boundary $\Sigma$ of constant mean curvature $A'(V) = H > 0$. Using the work of G. Huisken and T. Ilmanen \cite{Huisken-Ilmanen:2001} as stated in Lemma \ref{lem:monotonicity}, we see that
\[
\sqrt {\frac{A(V)}{16 \pi}} \left (1 - \frac{1}{16 \pi} A' (V)^2 A(V)\right) = \sqrt {\frac{\area(\Sigma)}{16 \pi}} \left( 1 - \frac{1}{16\pi} H^{2} \area(\Sigma) \right) \leq  m_{ADM}. 
\]
From this, we compute that 
\begin{align*}
\left( V - \frac{A(V)^{3/2 }}{6\sqrt{\pi}} \right)' & = 1 - \frac{1}{4\sqrt{\pi}}A'(V)A(V)^{1/2 }\\
& = \frac{1- \frac{1}{16\pi} A'(V)^{2}A(V)}{1 + \frac{1}{4\sqrt{\pi}}A'(V)A(V)^{1/2 }}
 \leq \frac{4\sqrt{\pi}  A(V)^{-1/2 }}{1 + \frac{1}{4\sqrt{\pi}}A'(V)A(V)^{1/2 }} m_{ADM}
\end{align*}
Using the remarks following Lemma \ref{lem:decomposition}, we see that $A'(V)\sqrt{A(V)}$ approaches $4\sqrt{\pi} $ as $V\to\infty$. It follows that the above expression is bounded above by 
\[
\frac{1}{2}A'(V) m_{ADM} (1 + o(1))
\]
as $V \to \infty$. Using that the isoperimetric profile is absolutely continuous, it follows that 
\[
\limsup_{V \to \infty}  \frac{2}{A(V)} \left(V - \frac{A(V)^{3/2 }}{6\sqrt{\pi}} \right) \leq m_{ADM}.
\]
In the general case, where $\partial M \neq \emptyset$, we work with the (unique) large component of a large isoperimetric region instead. The omit the formal modifications of the proof. 
\end{proof}
\end{theorem}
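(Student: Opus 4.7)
The plan is to prove the two inequalities
$m_{ADM} \leq \liminf_{V \to \infty} \tfrac{2}{A(V)}(V - A(V)^{3/2}/(6\sqrt{\pi}))$
and
$\limsup_{V \to \infty} \tfrac{2}{A(V)}(V - A(V)^{3/2}/(6\sqrt{\pi})) \leq m_{ADM}$
separately, using the Fan--Miao--Shi--Tam characterization (Lemma \ref{lem:FSTM}) for the first and the Huisken--Ilmanen monotonicity of the Hawking mass (Lemma \ref{lem:HIiso}) combined with the asymptotics of large isoperimetric regions (from the remarks following Lemma \ref{lem:decomposition}) for the second.

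\textbf{The $\liminf$ direction.} For each $V > 0$, choose $\rho_V > 1$ with $\vol(B_{\rho_V}) = V$. Since the coordinate ball $B_{\rho_V}$ lies in $\mathcal{R}_V$ (up to the trivial boundary contribution), we have $A(V) \leq \area(S_{\rho_V}) - \area(\partial M)$. A quick computation shows that, for fixed $V > 0$, the function $x \mapsto \tfrac{2}{x}\bigl( V - \tfrac{x^{3/2}}{6\sqrt{\pi}} \bigr)$ is strictly decreasing on $(0,\infty)$, so plugging in the smaller value $A(V)$ gives an expression at least as large as the analogous one with $\area(S_{\rho_V})$. Taking $\liminf$ as $V \to \infty$ (so $\rho_V \to \infty$) and invoking Lemma \ref{lem:FSTM} yields the desired lower bound.

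\textbf{The $\limsup$ direction.} I focus on the case $\partial M = \emptyset$; the general case is handled by passing to the unique large component of a large isoperimetric region, whose volume and boundary area differ from $V$ and $A(V) + \area(\partial M)$ by lower order as $V \to \infty$. At any volume $V > 0$ where $A'(V)$ exists (a set of full measure, since $A$ is absolutely continuous), an isoperimetric region of volume $V$ has connected, outward area-minimizing outer boundary $\Sigma_V$ with constant mean curvature $H = A'(V)$, so Lemma \ref{lem:HIiso} gives
\[
\sqrt{\tfrac{A(V)}{16\pi}} \bigl(1 - \tfrac{1}{16\pi} A'(V)^2 A(V)\bigr) \leq m_{ADM}.
\]
The key algebraic identity is
\[
\bigl( V - \tfrac{A(V)^{3/2}}{6\sqrt{\pi}}\bigr)' = 1 - \tfrac{A'(V)\sqrt{A(V)}}{4\sqrt{\pi}} = \frac{1 - \tfrac{1}{16\pi} A'(V)^2 A(V)}{1 + \tfrac{1}{4\sqrt{\pi}} A'(V)\sqrt{A(V)}},
\]
which allows the Hawking mass bound to control the derivative. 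Combined with the fact that $A'(V)\sqrt{A(V)} \to 4\sqrt{\pi}$ as $V \to \infty$ (because large isoperimetric regions are close to Euclidean balls on their own scale, per the remarks following Lemma \ref{lem:decomposition}), one obtains $\bigl(V - A(V)^{3/2}/(6\sqrt{\pi})\bigr)' \leq \tfrac{1}{2}A'(V) m_{ADM}(1+o(1))$ almost everywhere. Integrating via absolute continuity of $A$ and then dividing by $A(V)$ gives the claim.

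\textbf{Main obstacle.} The delicate step is the $\limsup$ bound: one must know that the outer boundary of the large component of an isoperimetric region is outward area-minimizing in $(\hat M,\hat g)$ so that Huisken--Ilmanen applies, and that the profile behaves like the Euclidean one to leading order so that the denominator $1 + A'(V)\sqrt{A(V)}/(4\sqrt{\pi})$ tends to $2$. Both inputs are available from the tools assembled in Sections \ref{sec:divergentisos} and the appendix. Everything else is calculus and the fact that $x\mapsto \tfrac{2V}{x} - \tfrac{x^{1/2}}{3\sqrt{\pi}}$ is monotone.
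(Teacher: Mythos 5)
Your proposal is correct and follows essentially the same route as the paper's proof: the $\liminf$ direction via Lemma \ref{lem:FSTM} and the monotonicity of $x \mapsto \tfrac{2}{x}(V - x^{3/2}/(6\sqrt{\pi}))$, and the $\limsup$ direction via the Huisken--Ilmanen bound for the outward area-minimizing boundary of a large isoperimetric region, the same algebraic identity for the derivative of $V - A(V)^{3/2}/(6\sqrt{\pi})$, the asymptotics $A'(V)\sqrt{A(V)} \to 4\sqrt{\pi}$, and integration using absolute continuity of the profile. The only difference is that you spell out the comparison-with-coordinate-balls step in the $\liminf$ direction, which the paper leaves implicit.
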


\begin {corollary} [Sharp isoperimetric inequality] \label{cor:isoperimetricinequality} Let $(M,g)$ be an asymptotically flat Riemannian $3$-manifold with non-negative scalar curvature. Let $\Omega \subset M$ be a compact region. Then 
\begin{align} \label{eqn:isoperimetricinequality}
\vol (\Omega) \leq \frac{\area (\partial \Omega)^{3/2}}{6 \sqrt \pi} +  \frac{m_{ADM}}{2} \area (\partial \Omega) + o(1) \area (\partial \Omega)
\end{align}
as $\vol (\Omega) \to \infty$.
\end {corollary}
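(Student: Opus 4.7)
The plan is to deduce the claimed inequality directly from Theorem \ref{prop:huisk-iso-mass} by exploiting the monotonicity of the isoperimetric profile and of the comparison function appearing on the right-hand side.

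Rearranging the limit in Theorem \ref{prop:huisk-iso-mass}, as $V \to \infty$ we have
\begin{equation*}
V \leq \frac{A(V)^{3/2}}{6\sqrt{\pi}} + \frac{m_{ADM}}{2}\, A(V) + o(1)\, A(V).
\end{equation*}
Thus optimal regions already saturate the desired estimate up to the error. It remains to pass from $A(V)$ to the boundary area of an arbitrary compact region $\Omega$ with $V = \vol(\Omega)$ large. In the natural case $\partial M \subset \partial \Omega$, the very definition of the isoperimetric profile gives $\area(\partial \Omega) \geq A(V) + \area(\partial M) \geq A(V)$; in general, replacing $\Omega$ by its union with a thin tubular neighborhood of $\partial M$ (or removing such a collar, as appropriate) changes the volume and the boundary area by uniformly bounded amounts, so $\area(\partial \Omega) \geq A(V) - C$ for some constant $C = C(M, g)$ depending only on the geometry near the horizon.

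Since the function $\phi(x) := x^{3/2}/(6\sqrt{\pi}) + (m_{ADM}/2)\,x$ is strictly increasing on $[0, \infty)$, we may substitute the previous inequality into the above expansion to obtain
\begin{equation*}
V \leq \phi\bigl(\area(\partial \Omega) + C\bigr) + o(1)\, \area(\partial \Omega),
\end{equation*}
where we have also used $A(V) \leq \area(\partial \Omega) + C$ to absorb $o(1) A(V)$ into $o(1)\, \area(\partial\Omega)$. A direct Taylor expansion gives $\phi(\area(\partial \Omega) + C) = \phi(\area(\partial \Omega)) + O\bigl(\sqrt{\area(\partial \Omega)}\bigr)$. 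By the classical isoperimetric inequality on the complete manifold $(M, g)$ one has $\area(\partial \Omega)^{3/2} \gtrsim \vol(\Omega)$, so $\area(\partial \Omega) \to \infty$ as $\vol(\Omega) \to \infty$, and hence the $O(\sqrt{\area(\partial \Omega)})$ correction is absorbed into the remainder $o(1)\, \area(\partial \Omega)$, yielding precisely \eqref{eqn:isoperimetricinequality}.

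I do not anticipate any substantive obstacle here: all real work is in Theorem \ref{prop:huisk-iso-mass}, and the corollary amounts to an elementary repackaging by monotonicity. The only mild care required concerns the contribution of $\partial M$, which introduces only bounded perturbations that vanish in the asymptotic regime $\vol(\Omega) \to \infty$.
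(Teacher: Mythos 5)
Your proposal is correct and follows the route the paper clearly intends: the corollary is stated without proof immediately after Theorem \ref{prop:huisk-iso-mass}, and the only content needed is exactly what you supply, namely that $\area(\partial\Omega) \geq A(\vol(\Omega)) - C$ (via the definition of the profile plus a bounded collar correction near $\partial M$) together with the monotonicity of $x \mapsto x^{3/2}/(6\sqrt{\pi}) + (m_{ADM}/2)x$ and the absorption of the $O(\sqrt{\area(\partial\Omega)})$ error. A marginally slicker packaging, already implicit in the paper's proof of Theorem \ref{prop:huisk-iso-mass}, is to use that $x \mapsto \tfrac{2}{x}\bigl(V - \tfrac{x^{3/2}}{6\sqrt{\pi}}\bigr)$ is decreasing, so replacing $A(V)$ by the larger quantity $\area(\partial\Omega)$ only decreases the isoperimetric deficit; this gives \eqref{eqn:isoperimetricinequality} in one line, but your argument is equivalent.
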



\section {Extension of a Brakke flow across a point}

In this section, we follow the notation, the conventions, and some of the ideas in T. Ilmanen's article \cite{Ilmanen:ellipticreg} closely.  \\

We define an injective map of Radon measures
\[
\hat {\mathcal{M}}(\R^{n+1}\setminus\{0\}) := \{\mu \in \mathcal{M}(\R^{n+1}\setminus \{0\}) : \mu(B_{1} (0)\setminus\{0\}) < \infty\} \to \mathcal{M}(\R^{n+1})
\]
that extends  $\mu \in \hat {\mathcal{M}}(\R^{n+1}\setminus\{0\})$ to a Radon measure $\hat \mu \in \mathcal{M}(\R^{n+1})$ such that 
\[
\hat \mu (\{0\}) = 0.
\]
 This map restricts to an injection of integer $n$-rectifiable Radon measures 
 \[
 \hat{\mathcal{M}}(\R^{n+1}\setminus\{0\})\cap \mathcal{IM}_{n}(\R^{n+1}\setminus\{0\})\to \mathcal{IM}_{n}(\R^{n+1})
 \]
 which in turn lifts to an injection of integer $n$-rectifiable varifolds  
 \[
\{ V \in \mathbf{IV}_{n}(\R^{n+1}) : \mu_V (B_1(0) \setminus \{0\}) < \infty \}  \to \mathbf{IV}_{n}(\R^{n+1})
 \]
 which we denote by 
 \[
 V \mapsto \hat V.
 \]
 
The extension of a stationary varifold across a point is not necessarily again stationary. 

\begin{example} \label{example:rays}
Let $\theta_1, \ldots, \theta_m \in \R$. Consider the rays $\ell_k = [0, \infty) \, e^{i \theta_k} \subset \R^2$. 
The varifold $V=\bigcup_{k=1}^{m}|  \ell_k|$ is stationary as an element of $\mathcal{IM}_{1}(\R^{2}\setminus\{0\})$. It is stationary as an element of $\mathcal{IM}_{1}(\R^{2})$ if and only if  $e^{i\theta_1} + \ldots + e^{i \theta_m} = 0$.
\end{example}

This phenomenon in the previous example is particular to dimension $n =1$. 

\begin {lemma} Let $n \geq 2$. There are radial functions $\chi_k \in C_c^\infty(B_1 (0))$ with $0 \leq \chi_k \leq 1$ such that $\chi_k (x) = 1$ when $|x| < 1/(2k^2)$ and $\chi_k  (x)= 0$ when $|x| > 1/k$ and constants $c_k \searrow 0$ with the following property. Let $\mu$ be a measure on $B_1(0) \setminus \{0\}$ such that, for come $c > 0$, 
\begin{align} \label{eqn:monotonicity}
\mu (B_\rho (0) \setminus \{0\}) \leq c \rho^n
\end{align}
for all $0 < \rho \leq 1$. Then 
\[
\frac{1}{c} \int|\nabla \chi_k|^2 d \mu \leq  c_k.
\]
\end {lemma}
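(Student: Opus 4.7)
The plan is to construct $\chi_k$ via a logarithmic reparametrization of a fixed one-dimensional cutoff, which is the standard device for handling the borderline growth exponent $n=2$. Fix a smooth non-increasing function $\phi : \R \to [0,1]$ with $\phi(s) = 1$ for $s \leq -1$ and $\phi(s) = 0$ for $s \geq 0$, and set
\[
\chi_k(x) = \phi\!\left( \frac{\log |x| + \log k}{\log(2k)} \right) \quad \text{for } x \neq 0, \qquad \chi_k(0) = 1.
\]
A direct check shows $\chi_k(x) = 1$ on $\{|x| \leq 1/(2k^2)\}$ and $\chi_k(x) = 0$ on $\{|x| \geq 1/k\}$; thus $\chi_k \equiv 1$ near $0$, which makes $\chi_k$ smooth on all of $B_1(0)$ and compactly supported in $B_{1/k}(0)$. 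It is manifestly radial. Moreover,
\[
|\nabla \chi_k(x)| \leq \frac{\|\phi'\|_\infty}{|x|\log(2k)} \, \mathbf{1}_{\{1/(2k^2) \leq |x| \leq 1/k\}}.
\]

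Next, I would decompose the annular support $\{1/(2k^2) \leq |x| \leq 1/k\}$ into dyadic shells $A_j = \{2^{j-1}/(2k^2) \leq |x| \leq 2^{j}/(2k^2)\}$ for $j = 1, \dots, J$ with $J = \lceil \log_2 k\rceil + 1$. On $A_j$ the pointwise bound $|\nabla \chi_k|^2 \leq 16\|\phi'\|_\infty^2 \, k^4 / (4^j (\log 2k)^2)$ holds, while the hypothesis \eqref{eqn:monotonicity} yields $\mu(A_j) \leq c \, (2^j/(2k^2))^n$. Multiplying and collecting terms,
\[
\int_{A_j} |\nabla \chi_k|^2 \, d\mu \leq \frac{16\,c\,\|\phi'\|_\infty^2}{2^n (\log 2k)^2} \, k^{4-2n} \, 2^{j(n-2)}.
\]

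Finally, I would sum over $j$, splitting into two cases. When $n = 2$, each of the $J \sim \log_2 k$ summands equals $4c\|\phi'\|_\infty^2/(\log 2k)^2$, giving a total bounded by $C' c / \log k$. When $n \geq 3$, the geometric series $\sum_{j=1}^J 2^{j(n-2)}$ is controlled by twice its last term $\lesssim k^{n-2}$, and the $k^{4-2n}$ factor yields a total bounded by $C' c \, k^{2-n}/(\log 2k)^2$. In either case $\frac{1}{c}\int |\nabla\chi_k|^2 d\mu \leq c_k$ with $c_k \to 0$; replacing $c_k$ by $\sup_{j \geq k} c_j$ enforces monotonicity.

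The only subtlety — and the reason the lemma is stated with $n \geq 2$ — is the necessity of the logarithmic cutoff in the borderline case $n=2$. The naive choice of a piecewise-linear transition on the full annulus $\{1/(2k^2) \leq |x| \leq 1/k\}$ would give $|\nabla \chi_k| \sim k$, and the estimate $\int |\nabla\chi_k|^2 d\mu \lesssim k^{2-n}$ fails to decay exactly when $n = 2$. Spreading the transition over a logarithmic scale turns this into a harmonic-type sum whose total is of size $1/\log k$, which is the essence of the argument.
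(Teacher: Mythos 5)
Your construction is correct: the logarithmic reparametrization $\chi_k(x)=\phi\bigl((\log|x|+\log k)/\log(2k)\bigr)$ together with the dyadic-shell summation gives exactly the bound $\frac{1}{c}\int|\nabla\chi_k|^2\,d\mu\leq c_k$ with $c_k\lesssim 1/\log k$ for $n=2$ and $c_k\lesssim k^{2-n}$ for $n\geq 3$, uniformly in $\mu$ and $c$, and this is the standard argument the paper implicitly relies on (it states the lemma without proof). The only cosmetic point is the degenerate index $k=1$, for which the support is not compactly contained in $B_1(0)$; this is harmless since one may simply begin the sequence at $k=2$.
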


Below, we will often work with the functions 
\begin{align} \label{eqn:cutoff}
\varphi_k = 1 - \chi_k \in C^\infty(\R^{n+1} \setminus \{0\}).
\end{align}
Note that $0 \leq \varphi_k \to 1$ locally uniformly on $\R^{n+1} \setminus \{0\}$ and that, under the assumptions of the previous lemma,
\[
\lim_{k \to \infty} \int |\nabla \varphi_k|^2 d \mu = 0.
\]

We include a proof of the following, well-known result as preparation for Proposition \ref{prop:extend-brakke-pt}.

\begin{lemma} [Extending stationary varifolds across a point] \label {lem:extendvarifold}
Let $n\geq 2$. Let $V$ be a stationary $n$-rectifable varifold on $\R^{n+1}\setminus\{0\}$ such that $\mu_V (B_{1} (0)\setminus\{0\}) < \infty$. The extension $\hat V$ of $V$ across the origin is stationary as an $n$-rectifiable varifold on $\R^{n+1}$.

\begin{proof}
Let $\varphi_{k} \in C^\infty(\R^{n+1} \setminus \{0\})$ be cut-off functions as in \eqref{eqn:cutoff}. Note that \eqref{eqn:monotonicity} holds by the monotonicity formula for stationary varifolds as stated in (17.5) of \cite{Simon:GMT}.
Let $X \in C^{1}_{c}(\R^{n+1};\R^{n+1})$. Then 
\[
0 = \int_{\Sigma}\text{div}_{\Sigma}(\varphi_{k}X) d\mu_{V} = \int_{\Sigma} \varphi_{k} \text{div}_{\Sigma}X\, d\mu_{V}+ \int_{\Sigma} X\cdot \proj_{T \Sigma} \nabla \varphi_{k} \,d\mu_{V}
\]
because $V = \underline{v}(\Sigma,\theta)$ is stationary in $\R^n\setminus \{0\}$. As $k \to \infty$, the first term on the right tends to $(\delta \hat V )(X)$, while the second term tends to $0$. 
\end{proof}
\end{lemma}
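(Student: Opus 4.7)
The plan is to use the stationarity of $V$ away from the origin by testing against vector fields of the form $\varphi_k X$, where $X \in C^1_c(\R^{n+1};\R^{n+1})$ is arbitrary and $\varphi_k \in C^\infty(\R^{n+1}\setminus\{0\})$ are the radial cutoffs from \eqref{eqn:cutoff}. Since $\varphi_k X$ has compact support in $\R^{n+1}\setminus\{0\}$, the stationarity of $V$ gives
\[
0 = \int \text{div}_{\Sigma}(\varphi_k X) \, d\mu_V = \int \varphi_k \, \text{div}_{\Sigma} X \, d\mu_V + \int X \cdot \proj_{T\Sigma} \nabla \varphi_k \, d\mu_V
\]
after expanding via the Leibniz rule. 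I would then let $k \to \infty$ and show that the first term on the right converges to $(\delta \hat V)(X)$ while the second vanishes, which gives $(\delta \hat V)(X) = 0$ for arbitrary $X$ and thus the stationarity of $\hat V$.

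Convergence of the first term is immediate from dominated convergence, using $0 \leq \varphi_k \leq 1$, $\varphi_k \to 1$ locally uniformly on $\R^{n+1}\setminus\{0\}$, together with the fact that the extension $\hat \mu_V$ places no mass at the origin. For the second term, Cauchy--Schwarz yields
\[
\left| \int X \cdot \proj_{T\Sigma} \nabla \varphi_k \, d\mu_V \right| \leq \|X\|_\infty \, \mu_V(B_{1/k}(0)\setminus\{0\})^{1/2} \left( \int |\nabla \varphi_k|^2 \, d\mu_V \right)^{1/2}.
\]
The monotonicity formula for stationary varifolds, see (17.5) in \cite{Simon:GMT}, supplies the density bound \eqref{eqn:monotonicity}, which both controls $\mu_V(B_{1/k}(0)\setminus\{0\}) \leq C k^{-n}$ and licenses the preceding cutoff lemma to conclude $\int |\nabla \varphi_k|^2 d\mu_V \to 0$.

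The main obstacle, and the technical heart of the argument, is the dimensional threshold $n \geq 2$: this is exactly the range in which cutoffs with vanishing gradient-$L^2$ norm relative to $\mu_V$ can be constructed under the density bound $\mu_V(B_\rho) \leq C\rho^n$. For $n \geq 3$, even a piecewise linear cutoff across a single dyadic annulus gives the required decay; for the borderline case $n = 2$, one needs a logarithmic cutoff with the scale ratio between the inner and outer transition radii growing like $k^2$, as in \eqref{eqn:cutoff}. That this restriction is sharp is illustrated by Example \ref{example:rays}: in dimension $n=1$, unbalanced rays emanating from the origin form a stationary varifold on $\R^2 \setminus \{0\}$ whose extension carries a non-trivial first-variation mass at the origin.
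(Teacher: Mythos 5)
Your proposal is correct and follows essentially the same route as the paper: test stationarity against $\varphi_k X$, expand by the Leibniz rule, and pass to the limit using the density bound from the monotonicity formula together with the $\int |\nabla \varphi_k|^2 \, d\mu_V \to 0$ property of the cutoffs. Your Cauchy--Schwarz estimate for the gradient term and the remarks on the dimensional threshold simply make explicit what the paper leaves implicit.
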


\begin{lemma}\label{lem:L2-H-extend-pt}
Let $n \geq 2$. Let $V$ be an $n$-rectifiable varifold on $\R^{n+1}\setminus \{0\}$ such that
\[
\mu_V (B_{\rho} (0) \setminus \{0\}) \leq c \rho^n
\] 
for some $c > 0$ and all $0 < \rho \leq 1$. Let $\phi \in C^{\infty}_{c}(\R^{n+1})$ be a non-negative function such that $V$ is $n$-rectifiable on $\{ x \in \R^{n+1} \setminus : \phi (x) > 0\}$ with absolutely continuous first variation such that $\int \phi |\mathbf{H}|^{2} d \mu_V < \infty$. The first variation of the extension $\hat V$ of $V$ across the origin is absolutely continuous on $\{x \in \R^{n+1}: \phi (x) > 0\}$.
\end{lemma}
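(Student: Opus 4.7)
The plan is to reduce to test vector fields supported near the origin (the only new issue compared to the given absolute continuity on $\{\phi > 0\} \cap (\R^{n+1}\setminus\{0\})$) and then use the radial cut-offs $\varphi_k$ from \eqref{eqn:cutoff} to pass to the limit. Concretely, given $X \in C^{1}_{c}(\R^{n+1};\R^{n+1})$ with $\supp X \subset \{\phi>0\}$, the vector field $\varphi_k X$ is compactly supported in $(\R^{n+1}\setminus\{0\})\cap \{\phi>0\}$, so by the hypothesis on $V$ we may write
\[
(\delta V)(\varphi_k X) = -\int \varphi_k\, X\cdot \mathbf{H}\, d\mu_V.
\]
Expanding the left-hand side as $\int \varphi_k\, \tr(\proj_{T\Sigma} DX)\, d\mu_V + \int X\cdot \proj_{T\Sigma}\nabla \varphi_k\, d\mu_V$, the goal becomes to show that both $\varphi_k$-dependent error terms vanish as $k\to\infty$, and that the right-hand side converges to $-\int X\cdot \mathbf{H}\, d\mu_{\hat V}$.

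For the divergence term, since $0\leq \varphi_k \to 1$ locally uniformly on $\R^{n+1}\setminus\{0\}$ and $\mu_{\hat V}(\{0\})=0$, dominated convergence gives convergence to $(\delta \hat V)(X)$. For the gradient error term, I would apply Cauchy--Schwarz to bound
\[
\left|\int X\cdot \proj_{T\Sigma}\nabla \varphi_k\, d\mu_V\right| \leq \|X\|_{L^\infty}\,\mu_V(\supp X)^{1/2}\left(\int |\nabla \varphi_k|^2\, d\mu_V\right)^{1/2},
\]
and invoke the previous lemma, whose hypothesis \eqref{eqn:monotonicity} is directly our assumed quadratic mass bound, to conclude that the $L^2$-norm of $\nabla \varphi_k$ against $\mu_V$ tends to zero. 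Finally, the right-hand side converges to $-\int X\cdot \mathbf{H}\, d\mu_{\hat V}$ by dominated convergence, provided $X\cdot \mathbf{H} \in L^1(\mu_V)$ on $\supp X$; this follows from the $L^2$-bound on $\mathbf{H}$ on $\supp X$ (since $\phi$ is bounded below by a positive constant there by compactness, the assumption $\int \phi |\mathbf{H}|^2\, d\mu_V<\infty$ yields $\int_{\supp X} |\mathbf{H}|^2\, d\mu_V<\infty$) combined with $\mu_V(\supp X)<\infty$ and Cauchy--Schwarz.

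Putting these limits together yields $(\delta \hat V)(X) = -\int X\cdot \mathbf{H}\, d\mu_{\hat V}$ for all admissible $X$, which is exactly absolute continuity of $\delta \hat V$ with density $\mathbf{H}$ on $\{\phi>0\}$. The only subtlety is the treatment near the origin, and the dimensional assumption $n\geq 2$ enters precisely through the previous lemma, which fails when $n=1$ (in keeping with Example \ref{example:rays}); this is the main (though mild) obstacle, and it is already handled by the construction of the $\varphi_k$.
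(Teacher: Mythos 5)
Your argument is correct, and it runs on the same engine as the paper's proof --- the radial cut-offs $\varphi_k$ together with the capacity-type estimate $\int |\nabla\varphi_k|^2\,d\mu_V \to 0$ (which is indeed where $n\geq 2$ enters, exactly as you say) --- but it organizes the role of $\phi$ differently. The paper tests against $\varphi_k\sqrt{\phi}\,X$, which produces a third error term involving $\proj_{T\Sigma}\nabla\sqrt{\phi}$; controlling it requires the estimate $|\nabla\phi|^2/\phi \leq 2\max|\nabla^2\phi|$ from Lemma 6.6 of \cite{Ilmanen:ellipticreg}, and the output is the uniform bound $|(\delta\hat V)(\sqrt{\phi}\,X)| \leq C\Vert X\Vert_{L^2(\mu_{\hat V})}$, i.e.\ $L^2$-boundedness of the weighted first variation up to the boundary of $\{\phi>0\}$. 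You instead exploit that absolute continuity on an open set is local: testing only against $X$ with $\supp X \Subset \{\phi>0\}$, where $\phi$ is bounded below, you dispense with the $\sqrt{\phi}$-weighting and the gradient estimate entirely and obtain the representation $(\delta\hat V)(X) = -\int X\cdot\mathbf{H}\,d\mu_{\hat V}$ directly. This is a genuine (if mild) simplification, and it loses nothing: since your identity shows the generalized mean curvature of $\hat V$ on $\{\phi>0\}$ coincides $\mu_{\hat V}$-a.e.\ with that of $V$, the weighted bound $\int\phi\,|\mathbf{H}|^2\,d\mu_{\hat V}<\infty$ --- which is what the extension lemma is actually invoked for in the proof of Proposition \ref{prop:extend-brakke-pt} --- follows immediately. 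The only point worth making explicit is that $\mu_V(\supp X)<\infty$ (local finiteness away from the origin plus the assumed quadratic bound near it), which you implicitly use in both Cauchy--Schwarz steps; otherwise the limits are all justified as you state.
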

\begin{proof}
Let $X \in C^{1}_{c}(\{x \in \R^{n+1} : \phi (x) > 0\}, \R^{n+1})$. Let $\varphi_{k} \in C^\infty(\R^{n+1} \setminus \{0\})$ be cut-off functions as in \eqref{eqn:cutoff}. We compute that 
\[
(\delta V)(\varphi_{k} \sqrt{\phi} X)  
= \int \varphi_{k} \sqrt{\phi} \mathbf{H} \cdot X d\mu_{V} \leq \left( \int \phi |\mathbf{H}|^{2}d\mu_{V}\right)^{\frac{1}{2}} \left( \int |X|^{2} d\mu_{V}\right)^{\frac 12} 
\leq C\Vert X\Vert_{L^{2}(\mu_{V})}
\]
and 
\[
(\delta V)(\varphi_{k}\sqrt{\phi} X) = \int\varphi_{k} \sqrt{\phi} \text{div}_{\Sigma} X \, d\mu_{V} + \int \sqrt{\phi} (\proj_{T\Sigma}\nabla \varphi_{k})\cdot X\, d\mu_{V} + \int\varphi_{k} (\proj_{T\Sigma} \nabla \sqrt{\phi}) \cdot X\, d\mu_V
\]
where $V = \underline{v}(\Sigma,\theta)$. In the last expression, the second term tends to zero by H\"older's inequality and the construction of $\varphi_{k}$, while the first term tends to $(\delta \hat V)(X)$. Finally, by H\"older's inequality, we may bound the third term by $\Vert |\nabla \phi|/\phi\Vert_{L^{2}(\mu_{V})} \Vert X\Vert_{L^{2}(\mu_{V})}$. The first quantity here can be bounded using the estimate in Lemma 6.6 of \cite{Ilmanen:ellipticreg}. Putting these facts together, we find that 
\[
|(\delta \hat V)(\sqrt \phi X)| \leq C\Vert X\Vert_{L^{2}(\mu_{ V})} = C \Vert X\Vert_{L^{2}(\mu_{\hat V})}  .
\]
This completes the proof. 
\end{proof}

We now turn to the situation for Brakke flows. 

\begin{proposition}[Extending Brakke flows across a point] \label{prop:extend-brakke-pt}  
Let $n \geq 2$. Let $\{\mu_{t}\}_{t\geq 0}$ be a codimension one integral Brakke flow on $\R^{n+1}\setminus\{0\}$ such that, for some constant $c > 0$, 
\[
 \mu_{t}(B_{\rho} (0)\setminus\{0\}) \leq c   \rho^n
\]
for all $t \geq 0$ and $0 < \rho \leq 1$. Then $\{ \hat \mu_{t}\}_{t\geq0}$ is a codimension one integral Brakke flow on $\R^{n+1}$. 

\begin {proof}

We use the cut-off functions $\varphi_k \in C^\infty(\R^{n+1} \setminus \{0\})$ from \eqref{eqn:cutoff}. Let $0 \leq \phi \in C^{2}_{c}(\R^{n+1})$.

Recall from Lemma 6.6 in \cite{Ilmanen:ellipticreg} that, on  $\{x \in \R^{n+1} : \phi(x) > 0\}$,
\begin{align} \label{eqn:Cheng}
\frac{|\nabla \phi|^2}{\phi} \leq 2 \max |\nabla^2 \phi|.
\end{align}

In a first step, we verify that, for all $t \geq 0$,
\begin{align} \label{eqn:continuityBrakkeB}
\lim_{k \to \infty} \mathcal{B} (\mu_t, \varphi_k^2 \phi) = \mathcal{B} (\hat \mu_t, \phi).
\end{align}
Assume first that $\mathcal{B}(\hat \mu_{t},\phi) > -\infty$. Then
\begin{align}
- \infty 
&< \mathcal{B} (\hat \mu_t, \varphi_k^2 \phi) = \mathcal{B}(\mu_{t},\varphi_{k}^{2}\phi) \nonumber \\ 
&= \int_{ \{ \varphi_k^2 \phi > 0\}} \left( - \varphi_{k}^{2}\phi |\mathbf{H}|^{2} +\varphi_{k}^{2} (\proj_{T^{\perp}\Sigma_{t}}\nabla \phi ) \cdot \mathbf{H}  +2\varphi_{k} \phi (\proj_{T^{\perp}\Sigma_{t}}\nabla \varphi_{k}) \cdot \mathbf{H}\right) d\mu_{t} \label {eqn:extendingbrakkecontinuity} 
\end{align}
for all $k$. The sum of the first two terms in \eqref{eqn:extendingbrakkecontinuity} tends to $\mathcal{B}(\hat \mu_{t},\phi)$ as $k \to \infty$ by dominated convergence. Using the H\"older inequality and the properties of the functions $\varphi_{k}$, we see that the third term tends to zero. This verifies \eqref{eqn:continuityBrakkeB} when $\mathcal{B} (\hat \mu_t, \phi) > - \infty$.

Assume now that $\liminf_{j\to\infty} \mathcal{B}(\mu_{t},\varphi_{k}^2 \phi) > - \infty$. From \eqref{eqn:extendingbrakkecontinuity}, we see that 
\[
\int_{\{ x \in \R^{n+1} \setminus \{0\} : \phi (x) > 0\}} \phi |\mathbf{H}|^2 d \mu_t = \limsup_{j\to\infty} \int_{\{ \varphi_k^2 \phi > 0\}} \varphi_{k}^{2} \phi|\mathbf{H}|^{2}d\mu_{t} < \infty.
\]
Lemma \ref{lem:L2-H-extend-pt} shows that $\mathcal{B}(\hat \mu_{t},\phi) > -\infty$. The claim follows from our earlier computation. 

Finally, it is easy to see that $\lim_{k \to \infty} \mathcal{B} (\mu_t, \varphi_k^2 \phi ) = - \infty$ when $\mathcal{B}(\hat \mu_t, \phi) = - \infty$.

Estimating \eqref{eqn:extendingbrakkecontinuity} as in \S 6.7 of \cite{Ilmanen:ellipticreg}, we see that 
\begin{align*}
\mathcal{B}(\mu_{t},\varphi_{k}^{2}\phi) \leq \int_{\{ \varphi_k^2 \phi > 0\}} \left( - \frac{1}{4}  \varphi_{k}^{2}\phi |\mathbf{H}|^{2} + \frac{1}{2} \varphi_{k}^{2} \frac{|\nabla \phi|^{2}}{\phi} +  4 \phi |\nabla \varphi_{k}|^{2} \right) d\mu_{t}.
\end{align*}
In combination with \eqref{eqn:Cheng} and the uniform mass bounds, we obtain
\begin{align} \label{eqn:Brakkejtbound}
\sup_{k} \sup_{t \geq 0} \mathcal{B} (\mu_t, \varphi_k^2 \phi) = C(\phi) < \infty.
\end{align}
As in \cite[\S 7.2(i)]{Ilmanen:ellipticreg},  in conjunction with the Brakke property for $\{\mu_t\}_{t \geq 0}$ this estimate implies that 
\[
t \mapsto \mu_t (\varphi_k^2 \phi) - C(\phi) t
\]
is non-increasing. Passing to the limit as $k \to \infty$ and using the uniform mass bounds, it follows that 
\[
t \mapsto \hat \mu_t (\phi) - C(\phi) t
\]
is non-increasing. 

We now verify that 
\[
\overline D_t \hat \mu_t (\phi) \leq \mathcal{B} (\hat \mu_t, \phi)
\] 
for all $t \geq 0$.  The argument follows a step of the proof of Theorem 7.1 on pp. 40--41 in  \cite {Ilmanen:ellipticreg} closely. 

Fix $t \geq 0$. We may assume that $- \infty < \overline D_{t} \hat \mu_{t}(\phi)$. Consider times $t_k \nearrow t$ and
\[
\overline D_{t}\hat \mu_{t}(\phi)  \leq \frac{\hat \mu_{t_{k} } (\phi) - \hat \mu_{t}(\phi)}{t_{k}-t} + o(1)
\]
as $j\to\infty$. (The case where $t_k \searrow t$ is analogous.) By choosing the indices $\ell(k)$ to tend to infinity sufficiently fast, we arrange that 
\[
- \infty < \overline D_{t}\hat \mu_{t}(\phi)  \leq \frac{ \mu_{t}(\varphi_{\ell(k)}^{2}\phi) -  \mu_{t_k}(\varphi_{\ell(k)}^{2}\phi)}{t - t_k} + o(1)
\]
as $k \to \infty$. Arguing as on p. 40 in \cite{Ilmanen:ellipticreg}, we see that there are $s_{k}\in [t_k,t]$ with 
\begin{align} \label{eqn:auxbrakkefinal}
- \infty < \overline D_{t} \hat \mu_{t}(\phi)   \leq \mathcal{B}(\mu_{s_{k}},\varphi_{\ell(k)}^{2}\phi) + o(1)
\end{align}
as $k \to \infty$. In particular,
\[
\limsup_{k \to \infty} \int  \varphi_{\ell(k)}^{2} \phi  |\mathbf{H}|^{2} d\mu_{s_{k}} < \infty.
\]
The measures $\mu_{s_k} \lfloor \{x \in \R^{n+1} \setminus \{0\} : \phi (x) > 0\}$ converge to $\mu_{t}\lfloor \{x \in \R^{n+1} \setminus \{0\}: \phi(x) > 0\}$ as $k \to \infty$ by the  same  argument as on p. 41 of \cite{Ilmanen:ellipticreg}. In fact, the associated varifolds converge. It follows that 
\[
\limsup_{k \to \infty} \mathcal{B} (\mu_{s_k}, \varphi_{\ell(k)}^2 \phi) \leq \mathcal{B} (\hat \mu_t, \phi).
\]
Together with \eqref{eqn:auxbrakkefinal} this finishes the proof. 
\end{proof}

\end{proposition}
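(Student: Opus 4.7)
The plan is to leverage the cutoff functions $\varphi_k = 1 - \chi_k$ from \eqref{eqn:cutoff} to transfer the Brakke property from $\{\mu_t\}_{t\geq 0}$ on $\R^{n+1}\setminus\{0\}$ to $\{\hat\mu_t\}_{t \geq 0}$ on $\R^{n+1}$. Fix a nonnegative test function $\phi \in C_c^2(\R^{n+1})$; the goal is to establish $\overline{D}_t \hat\mu_t(\phi) \leq \mathcal{B}(\hat\mu_t, \phi)$ for all $t \geq 0$. Since $\varphi_k^2\phi$ is compactly supported in $\R^{n+1}\setminus\{0\}$, the Brakke inequality for $\{\mu_t\}$ applied with this test function provides information we can push to the limit as $k\to\infty$. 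The density bound $\mu_t(B_\rho(0)\setminus\{0\})\leq c\rho^n$ plays the role of the monotonicity formula in Lemma \ref{lem:extendvarifold}, in particular ensuring $\int|\nabla\varphi_k|^2\,d\mu_t\to 0$.

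I would first check continuity of the right-hand side: $\mathcal{B}(\mu_t, \varphi_k^2 \phi) \to \mathcal{B}(\hat\mu_t, \phi)$ as $k \to \infty$. Expanding $\nabla(\varphi_k^2\phi)$ produces three terms, namely $-\varphi_k^2\phi|\mathbf{H}|^2$, $\varphi_k^2(\operatorname{proj}_{T^\perp}\nabla\phi)\cdot \mathbf{H}$, and the cross term $2\varphi_k\phi(\operatorname{proj}_{T^\perp}\nabla\varphi_k)\cdot\mathbf{H}$. When the limit is finite, Lemma \ref{lem:L2-H-extend-pt} forces $\int \phi|\mathbf{H}|^2\, d\mu_t < \infty$, so the first two terms converge by dominated convergence, while Cauchy--Schwarz against $\int|\nabla\varphi_k|^2\,d\mu_t\to 0$ kills the cross term; the remaining case $\mathcal{B}(\hat\mu_t,\phi) = -\infty$ is handled similarly. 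Next, I would obtain a uniform upper bound: using the standard absorbing estimate $\mathcal{B}(\mu,\psi) \leq \int(-\tfrac14\psi|\mathbf{H}|^2 + \tfrac12 |\nabla\psi|^2/\psi)\,d\mu$ with $\psi=\varphi_k^2\phi$, together with Cheng's inequality \eqref{eqn:Cheng} and the density hypothesis, gives $\sup_{k,t}\mathcal{B}(\mu_t,\varphi_k^2\phi)\leq C(\phi)$. The Brakke property for $\{\mu_t\}$ then makes $t\mapsto \mu_t(\varphi_k^2\phi) - C(\phi)t$ non-increasing, which by dominated convergence passes to monotonicity of $t\mapsto\hat\mu_t(\phi) - C(\phi)t$.

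To upgrade this weak monotonicity to the sharp pointwise inequality at a fixed time $t$, I would select $t_k\to t$ realizing $\overline{D}_t\hat\mu_t(\phi)$ and, by a diagonal argument, indices $\ell(k)\to\infty$ quickly enough that replacing $\phi$ by $\varphi_{\ell(k)}^2\phi$ introduces only $o(1)$ error. Applying the integrated Brakke inequality for $\mu_\cdot(\varphi_{\ell(k)}^2\phi)$ on $[t_k,t]$ and a mean-value argument produces intermediate times $s_k\in[t_k,t]$ satisfying $\overline{D}_t\hat\mu_t(\phi) \leq \mathcal{B}(\mu_{s_k},\varphi_{\ell(k)}^2\phi) + o(1)$. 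The main obstacle lies in passing to the limit on the right: finiteness of the left-hand side forces $\int\varphi_{\ell(k)}^2\phi|\mathbf{H}|^2\,d\mu_{s_k}$ to stay bounded, and combining this with Allard-type compactness (as in \S 7 of \cite{Ilmanen:ellipticreg}) yields, after extraction, varifold convergence of $V_{\mu_{s_k}}\lfloor\{\phi>0\}$ to $V_{\hat\mu_t}\lfloor\{\phi>0\}$, with weak convergence of the mean curvature vectors and lower semicontinuity of $\int\varphi_{\ell(k)}^2\phi|\mathbf{H}|^2\,d\mu$. This gives $\limsup_{k\to\infty}\mathcal{B}(\mu_{s_k},\varphi_{\ell(k)}^2\phi)\leq \mathcal{B}(\hat\mu_t,\phi)$, closing the argument.
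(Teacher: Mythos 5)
Your proposal is correct and follows essentially the same route as the paper's proof: the same three-term expansion of $\mathcal{B}(\mu_t,\varphi_k^2\phi)$ with dominated convergence and Cauchy--Schwarz against $\int|\nabla\varphi_k|^2\,d\mu_t\to 0$, the same absorbing estimate combined with \eqref{eqn:Cheng} to get the uniform bound $C(\phi)$ and the coarse monotonicity, and the same diagonal selection of $t_k$, $\ell(k)$, $s_k$ followed by varifold convergence and upper semicontinuity of $\mathcal{B}$ to close the argument. No substantive differences.
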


We remark that Example \ref{example:rays} shows that there is no analogue of  Proposition \ref{prop:extend-brakke-pt}  when $n = 1$. Indeed, stationary varifolds are (constant) Brakke flows. \\

The following result and its proof should be compared with the Constancy Theorem for stationary varifolds, as presented in \S 41 of \cite{Simon:GMT}.  

\begin{proposition} [Constancy theorem] \label{prop:standardlimitflow}
Let $\{ \mu_t \}_{t \geq0}$ be an integral Brakke flow in $\R^{3}$ such that $\mu(0)=\mathcal{H}^{2}\lfloor S_{1}(\xi)$. There is $T \in [0,\frac{1}{4})$ such that $\mu_t = \mathcal{H}^{2}\lfloor S_{\sqrt{1-4t}}(\xi)$ for all $t \in [0, T]$ and $\mu_t = 0$ for all  $t > T$. 
\end{proposition}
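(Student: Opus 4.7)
The plan is to combine Brakke's local regularity theorem in White's formulation~\cite{White:regularityMCF}, Huisken's monotonicity formula for integral Brakke flows, and the uniqueness of classical mean curvature flow starting from a smooth sphere. Define
\[
T = \sup\{s \geq 0 : \mu_t = \mathcal{H}^{2} \lfloor S_{\sqrt{1-4t}}(\xi) \text{ for all } t \in [0, s]\},
\]
with $T = 0$ if the supremum is over the empty set. I would first show that the sphere identity holds up to and including $t = T$. The Brakke inequality together with the uniform quadratic mass bound inherited from the sphere initial datum implies that $s \mapsto \mu_s(\phi) - C(\phi) s$ is non-increasing for $\phi \in C_c^{2}(\R^{3})$ with $\phi \geq 0$. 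Letting $s \nearrow T$ yields $\mu_T \leq \mathcal{H}^{2} \lfloor S_{\sqrt{1-4T}}(\xi)$, and by integrality $\mu_T = \mathcal{H}^{2} \lfloor E$ for some $\mathcal{H}^{2}$-measurable $E \subset S_{\sqrt{1-4T}}(\xi)$.

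Next I would establish the dichotomy $E \in \{S_{\sqrt{1-4T}}(\xi), \emptyset\}$ in the case $T > 0$, and derive a contradiction with the maximality of $T$ unless the flow has reached its extinction. Fix any $x_0 \in S_{\sqrt{1-4T}}(\xi)$. Huisken's monotonicity formula applied to the smooth sphere flow $\{\mu_t\}_{t < T}$, together with the fact that the tangent plane to the sphere at $x_0$ is a $2$-plane of density one, gives $\Theta((x_0, T)) = 1$. Upper semicontinuity of the Gaussian density together with integrality of the flow then yields $\Theta \leq 1 + \varepsilon$ in a small parabolic ball around $(x_0, T)$, where $\varepsilon > 0$ is the constant furnished by White's regularity theorem. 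That theorem implies $\{\mu_t\}$ is given by a classical smooth mean curvature flow in a slightly smaller parabolic neighborhood of $(x_0, T)$. By uniqueness of smooth mean curvature flow starting from a portion of a sphere, this classical flow must coincide with the shrinking sphere $\mathcal{H}^{2} \lfloor S_{\sqrt{1-4t}}(\xi)$. Hence $x_0 \in E$, and varying $x_0$ gives $E = S_{\sqrt{1-4T}}(\xi)$ together with a smooth extension of the flow past $T$---contradicting maximality. The only remaining case allowed by the proposition is $T = 0$, where the above interior-regularity argument is unavailable because a two-sided parabolic ball around $(x_0, 0)$ would extend outside the domain of the flow.

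Finally, once $\mu_T = 0$, I would conclude $\mu_t = 0$ for all $t > T$ using Huisken's monotonicity formula for integral Brakke flows: for any $(x_0, t_0)$ with $t_0 > T$, the quantity $s \mapsto \int \rho_{(x_0, t_0)}(y, s) \, d\mu_s(y)$ is non-increasing on $[T, t_0)$ and vanishes at $s = T$; positivity of the backward heat kernel then forces $\mu_s = 0$ for $s \in [T, t_0)$, and since $(x_0, t_0)$ is arbitrary, $\mu_t = 0$ throughout $(T, \infty)$. The main obstacle is the dichotomy step: ruling out that $\mu_T$ is a nontrivial proper subset of $S_{\sqrt{1-4T}}(\xi)$. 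The exact density computation $\Theta((x_0, T)) = 1$ via Huisken monotonicity for the smooth sphere flow, combined with the integrality-based upgrade from an upper Gaussian density bound to classical smoothness via White's theorem, are the key ingredients.
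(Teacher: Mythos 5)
There is a genuine gap, and it sits exactly at the step you flag as the crux. Your plan is to show that if $E=S_{\sqrt{1-4T}}(\xi)$ then White's local regularity theorem yields a smooth extension of the flow past $T$, contradicting maximality. But an integral Brakke flow is only required to satisfy the \emph{inequality} $\overline D_t\mu_t(\phi)\leq \mathcal B(\mu_t,\phi)$, so sudden extinction is always admissible: the flow $\mu_t=\mathcal H^2\lfloor S_{\sqrt{1-4t}}(\xi)$ for $t\leq T_0$ and $\mu_t=0$ for $t>T_0$ is a perfectly valid integral Brakke flow for \emph{any} $T_0\in[0,1/4)$, satisfying all hypotheses of the proposition. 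Your argument, if correct, would show that no such flow exists (i.e.\ that $T=1/4$ always), so it proves too much. The error is that White's theorem controls the flow only on its spacetime support; it concludes nothing at points $(x,t)$ with $t>T$ where the measure has already vanished, and hence cannot force the flow to persist. Indeed, the statement of the proposition deliberately allows $T<1/4$, and the paper rules out sudden vanishing for the \emph{particular} flows arising in its application in a separate lemma, using spherical barriers and the fact that those flows are limits of level set flows --- structure that a bare integral Brakke flow does not have.

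A second, related gap: your argument analyzes the flow only at and before the single time $T$, so it does not deliver the conclusion $\mu_t=0$ for \emph{all} $t>T$. Your final Huisken-monotonicity step is conditioned on $\mu_T=0$, which you have not established; and nothing in your argument excludes that $\mu_t$ is some nontrivial measure (a multiplicity-one piece of a sphere, say, or something not supported on the shrinking spheres at all) on a set of times after $T$ --- or, in the case $T=0$, on all of $(0,1/4)$. The paper's proof is organized precisely to avoid both problems: it first uses the avoidance principle to confine $\supp\mu_t$ to $S_{\sqrt{1-4t}}(\xi)$ for \emph{every} $t$; then an entropy bound gives multiplicity one, so $\mu_t=\mathcal H^2\lfloor\Sigma_t$ with $\Sigma_t\subset S_{\sqrt{1-4t}}(\xi)$ for a.e.\ $t$; the Brakke inequality forces $\int\phi|\mathbf H|^2\,d\mu_t<\infty$ and hence absolutely continuous first variation for a.e.\ $t$, whereupon a constancy-theorem argument (vanishing relative perimeter of $\Sigma_t$ in the sphere plus the Poincar\'e inequality) yields the all-or-nothing dichotomy at almost every time; finally, monotonicity of total mass and the one-sided time-continuity of Brakke flows assemble these a.e.\ statements into the stated structure with a single $T$. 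If you want to salvage your approach, you would at minimum need to import the avoidance principle and run the dichotomy at almost every time rather than only at $T$, and you would have to abandon the claim that regularity forces continuation past $T$.
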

\begin{proof}
The avoidance principle for Brakke flows --- as stated in \S 10.7 of \cite{Ilmanen:ellipticreg} --- shows that 
\[
\supp \mu_t \subset S_{\sqrt{1-4t}}(\xi)
\]
for all $t \in [0,\frac{1}{4}]$. The entropy of $S_{1}(\xi)$ is less than $3/2$. The entropy decreases along the Brakke flow by Lemma 7 of \cite{Ilmanen:singularities}. Using that $\{ \mu_t \}_{t \geq 0}$ is an integral Brakke flow, we see that for almost every $t \geq 0$ the measure $\mu_t$ has an approximate tangent plane with multiplicity one at $x$ for $\mu_t$-almost every $x$. Thus, for almost every $t \geq 0$, there is a measurable subset $\Sigma_{t} \subset S_{\sqrt{1-4t}} (\xi)$ with $\mu_t = \cH^{2}\lfloor \Sigma_{t}$. 

We claim that $\Sigma_t$ --- as a varifold with multiplicity one ---  has absolutely continuous first variation. Indeed, by \S 7.2 (ii) in \cite{Ilmanen:ellipticreg}, given  $\phi \in C^\infty_c (\R^3)$ we have that $- \infty < \overline D_t \mu_t (\phi)$ for almost every $t \geq 0$. Let $\phi$ such that $\phi (x) = 1$ for all $x \in B_2 (\xi)$. Using that $\overline D_t \mu_t(\phi) \leq \mathcal{B} (\mu_t, \phi)$ for a Brakke motion, the claim follows. 

For every $X \in C^{1}_{c}(\R^{3}, \R^3)$, we have that
\[
(\delta \mu_t) (X) = \int_{S_{\sqrt{1-4t}}(\xi)} \chi_{\Sigma_{t} } \text{div}_{S_{\sqrt{1-4t}}(\xi)} X \leq c_1 \Big( \int_{S_{\sqrt{1-4t}}(\xi)} |X|^{2}  \Big)^{1/2} \leq c_2 \sup_{S_{\sqrt {1 - 4 t}} (\xi)} |X|.
\]
It follows that the perimeter of $\Sigma_t$ as a subset of $S_{\sqrt {1 - 4 t} (\xi)}$ vanishes. The Poincar\'e inequality (as in Lemma 6.4 of \cite{Simon:GMT}) shows that either $\Sigma_t$ or its complement in $S_{\sqrt {1 - 4 t}} (\xi)$ is a set of $2$--dimensional measure zero. We have thus shown that for almost every $t \geq 0$, either $\mu_t = \cH^2\lfloor S_{\sqrt{1 - 4t} } (\xi)$ or $\mu(t) = 0$.

By \S 7.2 (ii) of \cite{Ilmanen:ellipticreg}, we have that 
\[
\lim_{t \nearrow s} \mu_s (\phi) \geq \mu_s(\phi) \geq \lim_{t \searrow s}\mu_t (\phi)
\]
for all $\phi \in C_c^\infty(\R^3)$ and all $s \geq 0$. This finishes the proof.
\end{proof}


\section {Geometry in the asymptotically flat end} \label{sec:gbarg}

Consider a Riemannian metric 
\[
 g = \sum_{i, j = 1}^3 g_{ij} d x^i \otimes dx^j \qquad \text{ where } \qquad g_{ij} = \delta_{ij} + \sigma_{ij}
\]
on $\{x \in \R^3 : |x| > 1/2\}$ where
\[
|x| | \sigma_{ij}| + |x|^2 |\partial_k \sigma_{ij}|= O (|x|^{-\tau}) \qquad \text{ as } \qquad |x| \to \infty
\]
for some $\tau > 1/2$.
We denote the Euclidean background metric by 
\[
\overline g = \sum_{i, j = 1}^3 \delta_{ij} d x^i \otimes dx^j.
\]

Let $\Sigma$ be a two-sided surface in $\{x \in \R^3 : |x| > 1/2\}$. The unit normal, the second fundamental form, the trace-free second fundamental form, the mean curvature (all with respect to the outward pointing unit normal), and the induced surface measure of $\Sigma$ are denoted by $\nu, h, \mathring{h}, H$, and $\mu$ respectively. These geometric quantities can also be computed with respect to the standard Euclidean metric $\overline g$ in the chart at infinity \eqref{eqn:chartatinfinity}. To distinguish these Euclidean quantities from those with respect to the curved metric, we denote them using  an additional bar: $\overline \nu, \overline h, \mathring{\overline h}, \overline H$, and $\overline \mu$. \\

A standard computation as in e.g. \cite[p. 418]{Huisken-Ilmanen:2001} shows that we can compare geometric quantities with respect to the curved metric $g$ and the Euclidean background $\overline g$ metric according to
\begin{align}
\nu (x) - \overline \nu (x) &= O (|x|^{-\tau}), \\
{h} (x) - {\overline{h}} (x) &= O( | {h} (x)| |x|^{-\tau})+O(|x|^{-1 - \tau}),\label{eqn:euclidh}\\
H - \overline{H}(x) &= O( |h (x)| |x|^{-\tau}) +O(|x|^{-1 - \tau}), \label{eqn:euclidH} \\
\mathring {h} (x) - \mathring{\overline{h}} (x)&= O( |h (x)| |x|^{-\tau}) +O(|x|^{-1-\tau}) \label{eqn:euclidring}.
\end{align}


\section {Hawking mass of outlying  spheres} \label{sec:Hawkingoutlying}

We continue with the notation of Appendix \ref{sec:gbarg}. Let $\delta > 0$. \\

We consider a closed surface $\Sigma$ in the chart at infinity \eqref{eqn:chartatinfinity} that is geometrically close to a coordinate sphere $S_\rho (a)$ with $|a| > (1 + \delta) \rho$ and $\rho > 1$ large. More precisely, we ask that the rescaled surface 
\[
\rho^{-1} \, \Sigma = \{ \rho^{-1} \, x : x \in \Sigma \}
\]
is $C^2$ close to the boundary of a unit ball in $\{x \in \R^{3} : |x| > \delta\}$. We claim that 
\[
m_{H} (\Sigma) = o(1)
\]
as $\rho \to \infty$. To see this, we follow the strategy of G. Huisken and T. Ilmanen in their proof of the ``Asymptotic Comparison Lemma 7.4" in  \cite{Huisken-Ilmanen:2001}. We use the positivity of a term dropped in \cite{Huisken-Ilmanen:2001} in conjunction with estimates of C. De Lellis and S. M\"uller \cite{DeLellis-Muller:2005} to handle an additional technical difficulty brought about by our weaker decay assumptions $\tau > 1/2$. All integrals below are with respect to the Euclidean background metric, unless explicitly noted otherwise.

Let $r > 0$ so that 
\[
\overline \area (\Sigma) = 4 \pi r^2.
\]
Clearly, $r$ and $\rho$ are comparable. Following G. Huisken and T. Ilmanen \cite[(7.11)]{Huisken-Ilmanen:2001}, we compute 
\begin{align*}
16\pi - \int_{\Sigma} H^{2}d\mu  = 16\pi - \int_{\Sigma} \overline H^{2}  \\
 \qquad +  \int_{\Sigma} \left( - \frac 12 H^{2} \tr_{\Sigma}\sigma + 2 H g(\sigma,h) - H^{2}\sigma(\nu,\nu) + 2 H \tr_{\Sigma} ( \nabla_{\, \cdot \,}\sigma)(\nu,\, \cdot \,) - H \tr_{\Sigma} \nabla_{\nu}\sigma \right) d\mu\\
 \qquad + O\int_{\Sigma} |\sigma|^{2}|h|^{2} + O\int_{\Sigma} |\partial \sigma|^{2}.
\end{align*}
The error terms are both $O(r^{-2\tau})$, since
\[
\int_\Sigma |h|^2 d \mu = O (1).
\]
By the Gauss equation and the Gauss--Bonnet formula, 
\[
16\pi - \int_{\Sigma} \overline H^{2} = - 2 \int_{\Sigma} |\mathring{\overline h}|^{2} . 
\]
Using this in the above equation and computing as in \cite[p.\ 420] {Huisken-Ilmanen:2001}, we arrive at  
\begin{align*}
16\pi - \int_{\Sigma} H^{2}d\mu & = - 2 \int_{\Sigma} |\mathring{\overline h}|^{2}
+  \frac 1 r \int_{\Sigma} \left( H \tr_{\Sigma}\sigma - 2 H \sigma(\nu,\nu) + 4 \tr_{\Sigma} (\nabla_{\, \cdot \,}\sigma )(\nu,\, \cdot \,) - 2 \tr_{\Sigma} \nabla_{\nu}\sigma \right) d\mu\\
& \qquad + O\int_{\Sigma}  |H - 2/r| ( H |\sigma| + |\partial \sigma|) + O \int_{\Sigma} H |\mathring h| |\sigma| + O(r^{-2\tau}).
\end{align*}
Finally, integrating by parts as in Huisken--Ilmanen (7.15), we find  
\[
 \int_{\Sigma} 2 \tr_{\Sigma} (\nabla_{\, \cdot \,}\sigma)(\nu,\, \cdot \,) d\mu = \int_{\Sigma} \left(2H \sigma(\nu,\nu) - H \tr_{\Sigma}\sigma \right)d\mu + O \int_\Sigma |\mathring h| |\sigma| 
\]
so that
\begin{align*}
16\pi - \int_{\Sigma} H^{2}d\mu  = - 2 \int_{\Sigma} |\mathring{\overline h}|^{2}
+ O\int_{\Sigma}  | H - 2/r | (H|\sigma| + |\partial \sigma|) + O \int_{\Sigma} H |\mathring h| |\sigma| 
 + O(r^{-2\tau})\\
 +  \frac 2 r \int_{\Sigma} \left(  \tr  (\nabla_{\, \cdot \,}\sigma)(\nu,\, \cdot \,) -  \tr \nabla_{\nu}\sigma \right) d\mu.
\end{align*}
Using that the scalar curvature is integrable and that $\Sigma$ is outlying and divergent as $r \to \infty$, we see that the ``mass integral" on the second line is $o(r^{-1})$.
Using \eqref{eqn:euclidH} and the (trivial) estimate $|h(x)| = O (r^{-1})$, we may rewrite the above expression as 
\begin{align*}
16\pi - \int_{\Sigma} H^{2}d\mu  \\ = - 2 \int_{\Sigma} |\mathring{\overline h}|^{2}
 + O\int_{\Sigma}  | \overline H - 2/r| (|\sigma|/r + |\partial \sigma|) + O \int_{\Sigma} H |\mathring h| |\sigma| 
 + O \int_{\Sigma} ( |\sigma|/r + |\partial \sigma|)^{2}
 + o(r^{-1})
\end{align*}
It is clear that this additional error term is $o(r^{-1})$. Simplifying, we find  
\begin{align*}
16\pi - \int_{\Sigma} H^{2}d\mu  = - 2 \int_{\Sigma} |\mathring{\overline h}|^{2}
 + O\left( r^{-1-\tau}\int_{\Sigma}  | \overline H - 2/r | \right)+ O \left( r^{-1-\tau} \int_{\Sigma}  |\mathring h| \right) 
 + o(r^{-1})
\end{align*}
Using now that $|\mathring h (x)| = |\mathring{\overline h} (x)| + O(r^{-1-\tau})$ by \eqref{eqn:euclidring}, we obtain  
\begin{align*}
16\pi - \int_{\Sigma} H^{2}d\mu  = - 2 \int_{\Sigma} |\mathring{\overline h}|^{2}
  + O\left( r^{-1-\tau}\int_{\Sigma}  | \overline H - 2/r| \right)+ O \left( r^{-1-\tau} \int_{\Sigma}  |\mathring{\overline h}| \right) 
  + o(r^{-1}).
\end{align*}
Using H\"older's inequality, we find
\begin{align*}
16\pi - \int_{\Sigma} H^{2}d\mu & \leq -  \int_{\Sigma} |\mathring{\overline h}|^{2} + O\left( r^{-1-\tau}\int_{\Sigma}  | \overline H - 2/r| \right) + o(r^{-1}).
\end{align*}
Using now the estimate
\begin{align}\label{eq:DM-H-vs-tfreeh}
\int_\Sigma ( \overline H - 2/r )^2 \leq c  \int_{\Sigma} |\mathring{\overline h}|^{2}
\end{align}
due to C. De Lellis and S. M\"uller \cite{DeLellis-Muller:2005} where $c > 0$ is a universal constant, it follows that 
\[
O \left( r^{-1-\tau}\int_{\Sigma}  | \overline H - 2/r | \right) \leq  \int_{\Sigma} |\mathring{\overline h}|^{2} + O(r^{-2\tau}). 
\]
Thus
\[
16\pi - \int_{\Sigma} H^{2}d\mu \leq o(r^{-1}) \qquad \text{ or, equivalently, } \qquad m_H (\Sigma) = o(1)
\]
as $r \to \infty$.

\begin{remark}
Since $\Sigma$ is geometrically close to $S_{\rho}(a)$ it is in particular convex. There are two alternative proofs of \eqref{eq:DM-H-vs-tfreeh} in this case. One  is due to G.\ Huisken and uses inverse mean curvature flow of mean-convex, star-shaped regions in $\R^{3}$ --- see Theorem 3.3 in \cite{Perez:thesis}. A second proof is due to D.\ Perez \cite[Theorem 3.1]{Perez:thesis}, who proves \eqref{eq:DM-H-vs-tfreeh} for convex hypersurfaces in $\R^{n+1}$ and proceeds via integration by parts with an appropriately chosen solution to the Poisson equation. 
\end{remark}


\bibliography{bib} 
\bibliographystyle{amsplain}
\end{document}